  \theoremstyle{definition}
  \newtheorem{defn}{\protect\definitionname}
  \theoremstyle{remark}
  \newtheorem{rem}{\protect\remarkname}
  \theoremstyle{plain}
  \newtheorem{thm}{\protect\theoremname}
   \newenvironment{proof}[1][\proofname]{\par
     \normalfont\topsep6\p@\@plus6\p@\relax
     \trivlist
     \itemindent\parindent
     \item[\hskip\labelsep
           \scshape
       #1]\ignorespaces
   }{%
     \endtrivlist\@endpefalse
   }
   \providecommand{\proofname}{Proof}
  \theoremstyle{plain}
  \newtheorem{lem}{\protect\lemmaname}
  \theoremstyle{plain}
  \newtheorem{prop}{\protect\propositionname}
\theoremstyle{definition}
\providecommand{\theoremname}{Theorem}
\providecommand{\definitionname}{Definition}
\providecommand{\lemmaname}{Lemma}
\providecommand{\propositionname}{Proposition}
\providecommand{\remarkname}{Remark}
\providecommand{\theoremname}{Theorem}
\begin{document}

\title{Hyperbolic and Elliptic Transport Barriers in \\
Three-Dimensional Unsteady Flows}

\author{Daniel Blazevski, George Haller}

\address{Institute for Mechanical Systems, Department of Mechanical and Process
Engineering, }

\address{ETH Zurich, Tannenstrasse 3, Zurich, Switzerland}
\begin{abstract}
We develop a general theory of transport barriers for three-dimensional
unsteady flows with arbitrary time-dependence. The barriers are obtained
as two-dimensional Lagrangian Coherent Structures (LCSs) that create
locally maximal deformation. Along hyperbolic LCSs, this deformation
is induced by locally maximal normal repulsion or attraction. Along
shear LCSs, the deformation is created by locally maximal tangential
shear. Hyperbolic LCSs, therefore, play the role of generalized stable
and unstable manifolds, while closed shear LCSs (elliptic LCSs) act
as generalized KAM tori or KAM-type cylinders. All these barriers
can be computed from our theory as explicitly parametrized surfaces.
We illustrate our results by visualizing two-dimensional hyperbolic
and elliptic barriers in steady and unsteady versions of the ABC flow. 
\end{abstract}
\maketitle

\section{Introduction}

Detecting transport barriers is important in a number of areas, including
geophysical flows \citep{Weiss}, plasma fusion \citep{Plasma}, reactive
flows \citep{Reactive}, and molecular dynamics \citep{toda05}. For
steady and temporally periodic flow models in these areas, classical
dynamical systems theory identifies key invariant manifolds acting
as phase space barriers \citep{Meiss}. Even in this well-understood
setting, however, only specific examples of \emph{de facto} barriers
have been identified. Indeed, even for steady flows, no general approach
to defining and locating multi-dimensional transport barriers has
been available. A commonly used informal definition of barriers as
surfaces with zero transverse flux is easily seen to be inadequate.
Indeed, \emph{any} randomly chosen surface of trajectories (material
surface) admits zero normal flux \citep{geo_theory}.

A number of heuristic flow diagnostics have nevertheless been employed
to infer transport barriers indirectly, mostly targeting two-dimensional
flows (see \citep{boffetta01}and \citep{peacock10} for reviews).
These diagnostics can be highly effective for simple flows, especially
if one tunes their threshold parameters to match expectations. However,
when it comes to discovering more complex flows, the lack of an exact
mathematical foundation renders these approaches problematic, manifested
by frame-dependence, false positives and false negatives \citep{basdevant94,jeong95,haller05}. 

For exploration, decision making and forecasting, one would ideally
require a general approach with solid mathematical foundations. Such
a general approach, the geodesic theory of transport barriers, has
recently emerged for two-dimensional unsteady flows \citep{geo_theory,Agulhas,geo_theory-1}.
Specifically, \citet{geo_theory} constructs transport barriers as
curves most closely shadowed by least-stretching geodesics of the
Cauchy--Green strain tensor derived from the flow map. As a further
improvement, \citet{Agulhas,geo_theory-1} constructs transport barriers
as distinguished geodesics of the Green--Lagrange strain tensors derived
from the flow map. The objective of the present work is to extend
these ideas to unsteady flows in three dimensions.

We are unaware of other approaches that would directly target transport
barrier surfaces in multi-dimensional unsteady flows. Scalar fields
associated with the flow map such as the finite-time Lyapunov exponents
(FTLE) and finite-size Lyapunov exponents have been used as indicators
of hyperbolic coherent structures \citep{Haller_dist,Shadden_nD,FSLE_3D, tallapragada, George_turb_3d, Flatland}.
Under certain conditions, select ridges of these fields can be rigorously
related to repelling and attracting transport barriers 
\cite{var_theory, mo_var_err, Mo_var_theory, Karrasch_comment}. 

Related methods exist for multi-dimensional coherence detection, including
an ergodicity-based approach to visualizing elliptic structures in
steady and temporally periodic flows \citep{Mezic3D}, as well as
a probabilistic approach to locating almost invariant sets in phase
space \citep{FR}.\textbf{ }Both methods seek coherent domains (ergodic
components or almost invariant sets) via a modal truncation of an
infinite-dimensional operator (the Koopman or the Peron--Frobenius
operator, respectively). This process then yields scalar fields whose
topology is expected to reveal coherent sets. Specifically, in \textbf{\citep{Mezic3D}},
low-index eigenfunctions are proposed as indicators of dynamically
distinct regions of the phase space. In \textbf{\citep{FR}}, the
left and right eigenfunctions of the second largest singular value
of the Peron--Frobenius operator are thresholded to maximize the coherence
ratio of pair of sets.

Our focus here is a direct variational construction of transport barriers
as parametrized surfaces. Instead of starting with a particular mathematical
quantity and arguing for its relevance in barrier detection, we start
with a physically motivated question: What objective property makes
transport barriers observable in physical and numerical experiments? 

We put forward the same answer that has been well-tested in two-dimensional
flows. Specifically, for a time-evolving surface $\mathcal{M}(t)$
to be an observed transport barrier, the following two properties
should hold:
\begin{description}
\item [{T1}] $\mathcal{M}(t)$ must be a \emph{material surface}, i.e.,
a two-dimensional invariant manifold in the extended phase space of
positions and time. This implies that barriers locally divide the
phase space and have zero flux between their two sides.
\item [{T2}] $\mathcal{M}(t)$ must impose \emph{locally extreme deformation}
on nearby sets of initial conditions. This is achieved either by locally
maximal normal repulsion or attraction (hyperbolic barrier), or locally
maximal tangential shear (shear barrier). 
\end{description}
Properties T1-T2 provide an extension of the concept of a multi-dimensional
Lagrangian Coherent Structure (LCSs) from the purely hyperbolic case
treated in \citep{var_theory} to the general case. Solving the extremum
problem described in T2 leads to vector fields of admissible unit
normals for hyperbolic and shear barriers. It turns out that surfaces
orthogonal to these admissible normal fields can only exist at locations
where the helicity of these normal fields vanishes. Strain- and shear-helicity
generically vanish on computable two-dimensional surfaces; these zero
sets necessarily contain the transport barriers we seek.

The intersection of transport barriers with select two-dimensional
reference surfaces turns out to satisfy ordinary differential equations
(strain and shear ODEs). These ODEs can be solved numerically, yielding
parametrized \emph{reduced strainlines} and \emph{reduced shearlines}
on the reference surfaces. Open reduced shearlines of zero helicity
signal generalized jets (\emph{parabolic barriers}), while closed
reduced shearlines mark invariant tubes or invariant tori (\emph{elliptic
barriers}). Extracting such parametrized curves over a parametrized
family of reference surfaces leads to explicitly parametrized two-dimensional
transport barriers. 

This construction applies to any three-dimensional flow with general
time-dependence, and uncovers key barriers that shape tracer patterns
over a finite time of observation. This time can be arbitrarily short
or long: our approach, by construction, will locate barriers that
best explain tracer patterns developing over the observational period
chosen. Over longer time intervals, the same approach yields increasingly
accurate approximations for classic hyperbolic and elliptic invariant
manifolds, should those exist in the given flow. 

We first illustrate these results on the steady and time-periodic
ABC flows, which have well-defined steady and time-periodic transport
barriers given by invariant manifolds. Even in these flows, we obtain
new, explicit barrier surfaces that were previously only inferred
from numerical images. Next, we consider a chaotically forced version
of the ABC flow over a finite time interval. For this flow, transport
barriers can only be constructed as temporally aperiodic material
surfaces in the extended phase space. Remarkably, we obtain that select
hyperbolic barriers and torus-type shear barriers continue to exist
even in this fully aperiodic setting. The latter tori tori deform
aperiodically in time, yet continue to provide sharp boundaries for
coherent Lagrangian vortices. Indeed, they exhibit minimal deformation
while nearby material elements in their exteriors stretch exponentially.

\section{Set-up and notation}

Consider the dynamical system 
\begin{equation}
\dot{x}\mathbf{=}v\mathbf{(}x,t),\qquad x\in D\subset\mathbb{R}^{3},\qquad t\in\left[t_{0},t_{0}+T\right],\label{gen_vector_field}
\end{equation}
with a smooth vector field $v(x,t)$ defined over a time interval
of length $T$, for locations $x$ in a compact set $D$. We assume
a finite time time interval in \eqref{gen_vector_field} since data
sets obtained from physical measurements or numerical methods are
only known over such intervals. Moreover, coherent structures of physical
interest are typically transient in nature (eddies, hurricanes, etc). 

A trajectory of \eqref{gen_vector_field} starting from $x_{0}$ at
time $t_{0}$ is denoted by $x(t;t_{0},x_{0})$. The flow map of \eqref{gen_vector_field}
is then defined as 
\begin{equation}
F_{t_{0}}^{t}(x_{0})\colon\,\, x_{0}\mapsto x(t;t_{0},x_{0}),\label{advection_map}
\end{equation}
which is as smooth in $x_{0}$ as the function $v(x,t)$ in $x$.

Assuming that $v$ is of $C^{r}$ with $r\geq1$, the flow gradient
$\nabla F_{t_{0}}^{t}(x_{0})$ can be computed. This linear mapping
advects small initial perturbations $\xi_{0}$ to $x_{0}$ along the
trajectory $x(t,t_{0},x_{0})$ to the evolved perturbation $\xi_{t}=\nabla F_{t_{0}}^{t}(x_{0})\xi_{0}$.
Note that 
\begin{equation}
\left|\xi_{t}\right|^{2}=\left<\xi_{0},C_{t_{0}}^{t}(x_{0})\xi_{0}\right>,\label{eq:xi^2}
\end{equation}
where $C_{t_{0}}^{t}(x_{0}):=\left[\nabla F_{t_{0}}^{t}(x_{0})\right]^{*}\nabla F_{t_{0}}^{t}(x_{0})$
denotes the \emph{Cauchy-Green strain tensor,} and $<\,\cdot\,,\,\cdot\,>$
is the classic Euclidean inner product. 

We will be interested in stationary values of total perturbation growth
\eqref{eq:xi^2} over the time interval $[t_{0},t_{0}+T]$. These
values are precisely the eigenvalues of the symmetric, positive definite
matrix $C_{t_{0}}^{t_{0}+T}(x_{0})$. The eigenvalues $\lambda_{i}(x_{0})$
and their corresponding orthonormal eigenvectors $\xi_{i}(x_{0})$
satisfy 
\begin{equation}
C_{t_{0}}^{t_{0}+T}(x_{0})\xi_{i}(x_{0})=\lambda_{i}(x_{0})\xi_{i}(x_{0}),\qquad\left|\xi_{i}(x_{0})\right|=1,\quad i=1,2,3.
\end{equation}
From now on, we restrict our discussion to an open set $U$ of initial
conditions where the eigenvalues of $C_{t_{0}}^{t_{0}+T}(x_{0})$
are disjoint:

\[
U=\left\{ x_{0}\in D\,:\,\,0<\lambda_{1}(x_{0})<\lambda_{2}(x_{0})<\lambda_{3}(x_{0})\right\} .
\]

\section{Three-dimensional transport barriers }

Here we give a formal definition of transport barriers building on
the properties T1-T2 described in the Introduction. According to T1,
a time-dependent transport barrier $\mathcal{M}(t)$ must be a material
surface, i.e., an invariant manifold in the extended phase space of
the variables $(x,t)$. This necessarily implies 
\begin{equation}
\mathcal{M}(t)=F_{t_{0}}^{t}\left(\mathcal{M}(t_{0})\right),
\end{equation}
for any time $t\in[t_{0},t_{0}+T].$ As long as $\mathcal{M}(t_{0})$
is a smooth surface, so is the surface $\mathcal{M}(t)$ for any fixed
time $t$. The family $\mathcal{M}(t)$ is equally smooth in $t$
by our smoothness assumption on \eqref{gen_vector_field}.

At an initial point $x_{0}\in\mathcal{M}(t_{0})$, let $n_{0}$ denote
a unit normal to $\mathcal{M}(t_{0})$. Then, as discussed in \citep{var_theory},
a smoothly varying unit normal to ${\cal M}(t)$ along the trajectory
$x(t,t_{0},x_{0})$ is given by 
\[
n_{t}(x_{0})=\frac{\left[\nabla F^{t_{0}}_{t}\left(x_{0}\right)\right]^{*}n_{0}}{\left|\left[\nabla F^{t^{0}}_{t}\left(x_{0}\right)\right]^{*}n_{0}\right|}.
\]

For any initial point $x_{0}\in\mathcal{M}(t_{0})$ and initial unit
normal $n_{0}$ to $\mathcal{M}(t_{0})$ at $x_{0}$, we define the
\textit{normal repulsion} $\rho(x_{0,}n_{0})$ of $\mathcal{M}(t)$
along the trajectory $x(t;t_{0},x_{0})$ as the normal component of
the growth of $n_{0}$ under the linearized flow between times $t_{0}$
and $t_{0}+T$ \citep{var_theory}. Specifically, we have 
\[
\rho(x_{0,}n_{0})=\langle n_{t_{0}+T}(x_{0}),\nabla F_{t_{0}}^{t_{0}+T}(x_{0})n_{0}\rangle,
\]
with the geometry illustrated in Fig. \ref{fig:flow_map}. Similarly,
we define the \emph{tangential shear} $\sigma(x_{0},n_{0})$ as the
tangential component of the growth of $n_{0}$ under the linearized
flow along the trajectory $x(t;t_{0},x_{0})$ between times $t_{0}$
and $t_{0}+T$ \citep{geo_theory}. Specifically, we have 
\[
\sigma(x_{0},n_{0})=\left|\nabla F_{t_{0}}^{t_{0}+T}(x_{0})n_{0}-\langle n_{t_{0}+T}(x_{0}),\nabla F_{t_{0}}^{t_{0}+T}(x_{0})n_{0}\rangle n_{t_{0}+T}(x_{0})\right|,
\]
with the geometry shown in Fig. \ref{fig:flow_map}. 

We seek transport barriers as material surfaces that maximize normal
repulsion or tangential shear with respect to perturbations to their
tangent spaces. We do not insist on this maximizing property under
all perturbations to the material surface: we only consider perturbations
to their tangent spaces. This is because we seek a well-defined local
directionality for the transport barrier, while in principle allowing
for it to have a finite thickness. In other words, the barrier may
\emph{a priori} be a thin set of nearby, parallel surfaces.

\begin{figure}
\includegraphics[width=0.95\textwidth]{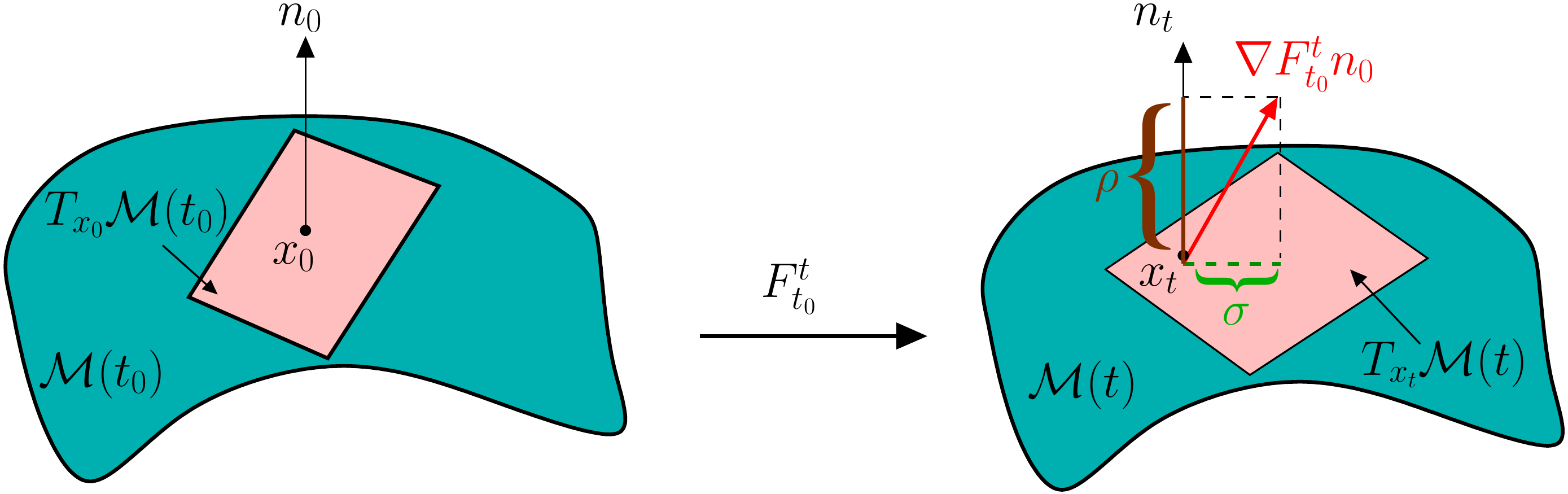}

\caption{The definition of normal repulsion and tangential shear along a material
surface.}

\label{fig:flow_map}
\end{figure}

\begin{defn}
\label{def:barriers}\end{defn}
\begin{description}
\item [{(i)}] A material surface $\mathcal{M}(t)\subset\mathbb{R}^{3}$
is called a \emph{repelling} \textit{hyperbolic LCS} over the interval
$[t_{0},t_{0}+T]$, if for any point $x_{0}\in\mathcal{M}(t_{0})$
and for any other material surface $\hat{\mathcal{M}}(t_{0})$ with
$x_{0}\in\hat{\mathcal{M}}(t_{0})$ and with unit normal $\hat{n}_{0}\not\parallel n_{0}$
at $x_{0}$, we have 
\begin{equation}
\rho(x_{0},\hat{n}_{0})<\rho(x_{0},n_{0}),\qquad\rho(x_{0},n_{0})>1.
\end{equation}

\item [{(ii)}] A material surface $\mathcal{M}(t)\subset\mathbb{R}^{3}$
is called an \emph{attracting} \textit{hyperbolic LCS} over the interval
$[t_{0},t_{0}+T]$, if for any point $x_{0}\in\mathcal{M}(t_{0})$
and for any other material surface $\hat{\mathcal{M}}(t_{0})$ with
$x_{0}\in\hat{\mathcal{M}}(t_{0})$ and with unit normal $\hat{n}_{0}\not\parallel n_{0}$
at $x_{0}$, we have 
\begin{equation}
\rho(x_{0},\hat{n}_{0})>\rho(x_{0},n_{0}),\qquad\qquad\rho(x_{0},n_{0})<1.
\end{equation}

\item [{(iii)}] A material surface $\mathcal{M}(t)\subset\mathbb{R}^{3}$
is called a \textit{shear }\emph{LCS }over the interval $[t_{0},t_{0}+T]$,
if for any point $x_{0}\in\mathcal{M}(t_{0})$ and for for any other
material surface $\hat{\mathcal{M}}(t_{0})$ with $x_{0}\in\hat{\mathcal{M}}(t_{0})$
and with unit normal $\hat{n}_{0}\not\parallel n_{0}$ at $x_{0}$,
we have 
\begin{equation}
\sigma(x_{0},\hat{n}_{0})\leq\sigma(x_{0},n_{0}),
\end{equation}
with $\hat{n}_{0}$ denoting a unit normal to $\hat{\mathcal{M}}(t_{0})$
at the point $x_{0}$ .
\item [{(iv)}] A material surface $\mathcal{M}(t)\subset\mathbb{R}^{3}$
is called a \textit{transport barrier} over the interval $[t_{0},t_{0}+T]$,
if it is either a hyperbolic or a shear LCS over $[t_{0},t].$\end{description}
\begin{rem}
By (i) of Definition 1, hyperbolic LCS exhibit strictly maximal repulsion
relative to all perturbations to their tangent spaces. This is to
guarantee that any potential stretching in directions tangent to the
LCS is smaller than in its normal direction. This condition guarantees
both the observability and the robustness of a repelling hyperbolic
barrier \citep{var_theory}. The same observation applies to attracting
hyperbolic barriers in backward time.
\end{rem}
~~
\begin{rem}
By (ii) of Definition 1, the shear $\sigma(x_{0},n_{0})$ along a
shear barrier $\mathcal{M}(t_{0})$ is not required to be strictly
maximal among all possible perturbations to the tangent space of $\mathcal{M}(t_{0})$.
As it turns out below, there are always two choices of the normal
$n_{0}$ yielding the exact same, locally largest value of $\sigma(x_{0},n_{0})$
at any point $x_{0}$. Shear barriers, therefore, exhibit strictly
maximal shear only with respect to small enough perturbations of their
normals. There will always exist a unique, finite perturbation to
their normal yielding exactly the same locally maximal shear at $x_{0}$.
The two shear extrema at $x_{0}$ represent maximal shear with two
different signs, which is disguised by the absolute value appearing
in the definition of $\sigma(x_{0},n_{0})$. 
\end{rem}
The following theorem shows that for a material surface $\mathcal{M}(t)$
to be a transport barrier over $[t_{0},t_{0}+T]$, its initial position
must be orthogonal to a direction characterizing maximal repulsion
or maximal shear. We use the notation $T_{x_{0}}\mathcal{M}(t_{0})$
for the tangent space of $\mathcal{M}(t_{0})$ at a point $x_{0}$. 
\begin{thm}
{[}Existence of transport barriers{]} Let $\mathcal{M}(t)\subset\mathbb{R}^{3}$
be a material surface over the time interval $[t_{0},t_{0}+T]$. Then\end{thm}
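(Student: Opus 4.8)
The plan is to characterize, at each point $x_0\in\mathcal M(t_0)$, the dependence of $\rho(x_0,n_0)$ and $\sigma(x_0,n_0)$ on the unit normal $n_0$ alone, and then impose the extremality conditions of Definition~\ref{def:barriers}. The first step is to rewrite the normal repulsion in terms of the Cauchy--Green tensor: since $n_{t_0+T}(x_0)$ is the normalized image of $n_0$ under $\left[\nabla F^{t_0}_{t_0+T}\right]^{*}$, a short computation gives
\[
\rho(x_0,n_0)=\frac{\bigl\langle n_0, C_{t_0}^{t_0+T}(x_0)\, n_0\bigr\rangle}{\sqrt{\bigl\langle n_0,\bigl(C_{t_0}^{t_0+T}(x_0)\bigr)^{-1} n_0\bigr\rangle^{-1}}}
\]
(or, more cleanly, $\rho^2 = \langle n_0, C n_0\rangle\cdot\langle n_0, C^{-1} n_0\rangle^{-1}$ after using $\langle n_0,n_0\rangle=1$), while the Pythagorean identity $|\nabla F\, n_0|^2=\rho^2+\sigma^2$ together with $|\nabla F\, n_0|^2=\langle n_0, C n_0\rangle$ yields $\sigma^2(x_0,n_0)=\langle n_0, C n_0\rangle-\rho^2(x_0,n_0)$. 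Thus both quantities are smooth functions of $n_0$ on the unit sphere, expressible purely through $C=C_{t_0}^{t_0+T}(x_0)$ and its eigenbasis $\{\xi_1,\xi_2,\xi_3\}$, $\{\lambda_1,\lambda_2,\lambda_3\}$.

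The second step is the constrained optimization. Writing $n_0=\sum_i c_i\,\xi_i$ with $\sum c_i^2=1$, one expands $\rho$ and $\sigma$ as explicit functions of $(c_1,c_2,c_3)$ and looks for critical points subject to the unit-norm constraint (Lagrange multipliers), while also checking the second-order conditions demanded by Definition~\ref{def:barriers}: a strict maximum for the hyperbolic cases (parts (i)--(ii)) and a non-strict, ``locally largest with two realizing normals'' maximum for the shear case (part (iii)), consistent with Remark stated just before the theorem. I expect the hyperbolic analysis to show that $\rho$ is maximized exactly when $n_0=\pm\xi_1(x_0)$ (the stiffest Cauchy--Green direction having the smallest eigenvalue $\lambda_1$ contributes least to tangential stretching, hence maximal \emph{normal} repulsion), with the repulsion-versus-unity condition $\rho>1$ or $\rho<1$ then becoming a condition on $\lambda_1$; equivalently $T_{x_0}\mathcal M(t_0)=\mathrm{span}\{\xi_2(x_0),\xi_3(x_0)\}$. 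For the shear case I expect the maximizing normals to lie in the $\xi_1$--$\xi_3$ plane, of the form $n_0 = a\,\xi_1 \pm b\,\xi_3$ with coefficients built from $\lambda_1,\lambda_3$ (the two sign choices being the two extrema of Remark~2), so that $T_{x_0}\mathcal M(t_0)$ contains $\xi_2(x_0)$ and a corresponding $\xi_1$--$\xi_3$ combination.

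The third step is to translate these pointwise normal conditions into the statement of the theorem: the claim should assert that if $\mathcal M(t)$ is a transport barrier then for every $x_0\in\mathcal M(t_0)$ the normal $n_0$ is forced to be one of the special directions just identified, equivalently $T_{x_0}\mathcal M(t_0)$ is one of the listed eigenplanes (or $\xi_2$-plus-combination planes), together with the scalar side conditions ($\rho>1$, $\rho<1$) for the hyperbolic alternatives. The main obstacle I anticipate is the shear computation: unlike the repulsion, $\sigma$ is not a Rayleigh quotient, so finding its critical normals requires carefully handling the subtraction $\langle n_0,Cn_0\rangle-\rho^2$ and verifying that the critical set consists precisely of the two $\xi_1$--$\xi_3$ combinations (and not, say, spurious critical directions involving $\xi_2$), as well as correctly diagnosing which are maxima versus saddles on the sphere. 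A secondary subtlety is bookkeeping the $\pm$ sign ambiguity of unit normals and the phrase $\hat n_0\not\parallel n_0$ in the definition, so that ``maximal'' is interpreted modulo sign; this is what makes the shear extremum non-strict and must be stated carefully to match Remark~2. Once the pointwise picture is nailed down, assembling the global statement about $\mathcal M(t_0)$ is immediate, since the condition is imposed at every point.
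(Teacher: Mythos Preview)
Your overall strategy---express $\rho$ and $\sigma$ in terms of $C=C_{t_0}^{t_0+T}(x_0)$, expand $n_0$ in the eigenbasis $\{\xi_1,\xi_2,\xi_3\}$, and apply Lagrange multipliers on the unit sphere---is exactly what the paper does. However, two concrete errors in your execution would derail the hyperbolic part.

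First, your formula for the normal repulsion is wrong. With $A=\nabla F_{t_0}^{t_0+T}(x_0)$ and $n_{t_0+T}=(A^{*})^{-1}n_0/\lvert (A^{*})^{-1}n_0\rvert$, one has
\[
\rho(x_0,n_0)=\bigl\langle n_{t_0+T},\,A n_0\bigr\rangle
=\frac{\langle (A^{*})^{-1}n_0,\,A n_0\rangle}{\lvert (A^{*})^{-1}n_0\rvert}
=\frac{\langle n_0,n_0\rangle}{\lvert (A^{*})^{-1}n_0\rvert}
=\frac{1}{\sqrt{\langle n_0,\,C^{-1}n_0\rangle}},
\]
so $\rho^2=\langle n_0,C^{-1}n_0\rangle^{-1}$, \emph{not} $\langle n_0,Cn_0\rangle\cdot\langle n_0,C^{-1}n_0\rangle^{-1}$ as you wrote. (Your version gives $\rho(\xi_i)=\lambda_i$ instead of the correct $\rho(\xi_i)=\sqrt{\lambda_i}$.) The Pythagorean identity $\sigma^2=\langle n_0,Cn_0\rangle-\rho^2$ is fine once $\rho$ is corrected.

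Second, and more seriously, you identify the wrong maximizer. With the correct formula, $\rho(x_0,n_0)=\bigl(\sum_i n_i^2/\lambda_i\bigr)^{-1/2}$ is \emph{maximized} when the entire weight sits on the \emph{largest} eigenvalue, i.e.\ at $n_0=\pm\xi_3$, giving $\rho=\sqrt{\lambda_3}$. Hence a repelling hyperbolic LCS has normal $\xi_3$ and tangent space $\mathrm{span}\{\xi_1,\xi_2\}$, not $\mathrm{span}\{\xi_2,\xi_3\}$ as you claimed; your intuition (``smallest eigenvalue contributes least to tangential stretching, hence maximal normal repulsion'') has the geometry reversed. The attracting case is the minimizer $n_0=\pm\xi_1$. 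Your expectation for the shear case (normals $n_\pm$ in the $\xi_1$--$\xi_3$ plane, two sign choices realizing the same maximal $\sigma$) is correct and matches the paper's computation, which you have yet to carry out.
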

\begin{description}
\item [{(i)}] $\mathcal{M}(t)$ is a repelling hyperbolic LCS if and only
if $\mathcal{M}(t_{0})\subset U$ and $\xi_{3}(x_{0})\perp T_{x_{0}}\mathcal{M}(t_{0})$
holds for all $x_{0}\in\mathcal{M}(t_{0})$.
\item [{(ii)}] $\mathcal{M}(t)$ is an attracting hyperbolic LCS if and
only if $\mathcal{M}(t_{0})\subset U,$ and $\xi_{1}(x_{0})\perp T_{x_{0}}\mathcal{M}(t_{0})$
holds for all $x_{0}\in\mathcal{M}(t_{0})$.
\item [{(iii)}] $\mathcal{M}(t)$ is a shear LCS if and only if $\mathcal{M}(t_{0})\subset U,$
and $n_{\pm}(x_{0})\perp T_{x_{0}}\mathcal{M}(t_{0})$ holds for all
$x_{0}\in\mathcal{M}(t_{0})$ for one choice of the sign $\pm$ in
the vector field 
\[
n_{\pm}(x_{0})=\sqrt{\frac{\sqrt{\lambda_{1}(x_{0})}}{\sqrt{\lambda_{1}(x_{0})}+\sqrt{\lambda_{n}(x_{0})}}}\xi_{1}(x_{0})\pm\sqrt{\frac{\sqrt{\lambda_{3}(x_{0})}}{\sqrt{\lambda_{1}(x_{0})}+\sqrt{\lambda_{3}(x_{0})}}}\xi_{3}(x_{0}).
\]
\end{description}
\begin{proof}
See \ref{App:proof_main_thm}.\end{proof}
\begin{rem}
The above necessary conditions for hyperbolic LCS have previously
been obtained from slightly different considerations, along with examples
illustrating their meaning in \citep{var_theory,Mo_var_theory,FH13}.
For an explicit example of how shear LCS can be found in three-dimensional,
unsteady parallel shear flows, we refer to \ref{App:par_shear_flow}.
\end{rem}
~~~~
\begin{rem}
Unlike in the two-dimensional case \citep{geo_theory}, the shear
LCSs obtained in (iii) of Theorem 1 generally do not preserve their
surface area under an incompressible flow map $F_{t_{0}}^{t_{0}+T}$
, even though they still preserve their enclosed volume (cf. \ref{App:surface_area}).
This enables their use in detecting material footprints of commonly
observed toroidal vortices, such as growing smoke rings.
\end{rem}
~~~~
\begin{rem}
A related recent paper \citep{THK} shows how quasi-invariant hyperbolic
LCS can be used to compute a specific family of hyperbolic barriers
(normally hyperbolic invariant manifolds) in steady flows of arbitrary
dimension. 

~~~~~
\end{rem}

Theorem 1 requires the initial position $\mathcal{M}(t_{0})$ of a
transport barrier to be orthogonal to $\xi_{3}$ (hyperbolic barrier)
or to $n_{\pm}$ (shear barrier). In general, if a two-dimensional
surface is orthogonal to a three-dimensional vector field $\rho(x)$,
then any local parametrization $p(s_{1},s_{2})\colon U\subset\mathbb{R}^{2}\rightarrow\mathbb{R}^{3}$
of the surface must satisfy the first-order quasi-linear system of
PDEs 
\begin{equation}
\begin{split} & \rho_{1}(p)\partial_{s_{1}}p_{1}+\rho_{2}(p)\partial_{s_{1}}p_{2}+\rho_{3}(p)\partial_{s_{1}}p_{3}=0,\\
 & \rho_{1}(p)\partial_{s_{2}}p_{1}+\rho_{2}(p)\partial_{s_{2}}p_{2}+\rho_{3}(p)\partial_{s_{2}}p_{3}=0.
\end{split}
\label{PDE_orth}
\end{equation}

This system of PDEs will only have a smooth solution through a given
point $x_{0}$ if this point is contained in a transport barrier.
To locate such barrier points, we now give computable necessary conditions
for transport barrier locations. In stating these conditions, we will
use the \emph{helicity} $H_{\rho}(x)$ of a three-dimensional vector
field $\rho(x)$, defined as 
\begin{equation}
H_{\rho}(x)=\left<\nabla\times\rho(x),\rho(x)\right>,
\end{equation}
with $\times$ denoting the cross product. 
\begin{thm}
{[}Necessary condition for transport barriers{]} Let $\mathcal{M}(t)\subset\mathbb{R}^{3}$
be a material surface over the time interval $[t_{0},t_{0}+T]$.\end{thm}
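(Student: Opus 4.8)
The plan is to derive the statement from one elementary fact together with the existence theorem (Theorem 1): \emph{if a $C^{1}$, nowhere-vanishing vector field $\rho$ is orthogonal to a smooth surface $\mathcal{M}$ at each of its points, then the helicity $H_{\rho}=\langle\nabla\times\rho,\rho\rangle$ vanishes identically on $\mathcal{M}$.} Granting this, the three cases follow at once. If $\mathcal{M}(t)$ is a repelling (resp.\ attracting) hyperbolic LCS, Theorem 1 gives $\mathcal{M}(t_{0})\subset U$ and $\xi_{3}(x_{0})\perp T_{x_{0}}\mathcal{M}(t_{0})$ (resp.\ $\xi_{1}(x_{0})\perp T_{x_{0}}\mathcal{M}(t_{0})$) for all $x_{0}\in\mathcal{M}(t_{0})$, so one applies the fact with $\rho=\xi_{3}$ (resp.\ $\rho=\xi_{1}$); if $\mathcal{M}(t)$ is a shear LCS, Theorem 1 gives $n_{\pm}(x_{0})\perp T_{x_{0}}\mathcal{M}(t_{0})$ for one fixed choice of sign, so one applies it with $\rho=n_{\pm}$. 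In every case $\rho$ is a \emph{unit} field on $U$ (note $|n_{\pm}|=1$), hence nonzero; it is $C^{1}$ on $U$, where the eigenvector fields inherit the regularity of $C_{t_{0}}^{t_{0}+T}$ and its eigenvalues are separated; and since $H_{-\rho}=H_{\rho}$, the sign ambiguity of the eigenvector fields is immaterial.

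To prove the elementary fact, I would fix $x_{0}\in\mathcal{M}(t_{0})$, take a regular $C^{2}$ parametrization $p(s_{1},s_{2})$ of $\mathcal{M}(t_{0})$ with $p(0,0)=x_{0}$, and observe that orthogonality of $\rho$ to $\mathcal{M}(t_{0})$ is precisely the pair of identities $\langle\rho(p),\partial_{s_{1}}p\rangle\equiv0$, $\langle\rho(p),\partial_{s_{2}}p\rangle\equiv0$ --- that is, system \eqref{PDE_orth}. Differentiating the first in $s_{2}$ and the second in $s_{1}$ and subtracting, the symmetric Hessian terms $\langle\rho(p),\partial_{s_{1}}\partial_{s_{2}}p\rangle$ cancel by equality of mixed partials, leaving $\langle (D\rho)\partial_{s_{2}}p,\partial_{s_{1}}p\rangle-\langle (D\rho)\partial_{s_{1}}p,\partial_{s_{2}}p\rangle=0$, where $D\rho$ is the Jacobian of $\rho$. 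The left-hand side equals $\partial_{s_{1}}p^{\top}\big(D\rho-(D\rho)^{\top}\big)\partial_{s_{2}}p=-\langle\nabla\times\rho,\,\partial_{s_{1}}p\times\partial_{s_{2}}p\rangle$, by the identity $\big(D\rho-(D\rho)^{\top}\big)w=(\nabla\times\rho)\times w$ and the cyclic symmetry of the scalar triple product. Since $\partial_{s_{1}}p\times\partial_{s_{2}}p$ is a nonzero multiple of the unit normal $n_{0}$, which is itself parallel to $\rho(x_{0})$, this forces $\langle\nabla\times\rho(x_{0}),\rho(x_{0})\rangle=H_{\rho}(x_{0})=0$. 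As $x_{0}$ was arbitrary, $H_{\rho}\equiv0$ on $\mathcal{M}(t_{0})$.

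The one step needing genuine care is this cross-differentiation: one must check that the mixed second derivatives of the parametrization really drop out, so that exactly $\nabla\times\rho$ survives with no residual terms. Invariantly, the computation expresses that the $1$-form $\alpha=\rho_{1}\,dx_{1}+\rho_{2}\,dx_{2}+\rho_{3}\,dx_{3}$ pulls back to zero on $\mathcal{M}(t_{0})$, hence so does its exterior derivative $d\alpha$ (the $2$-form dual to $\nabla\times\rho$), and $d\alpha$ evaluated on a tangent frame at $x_{0}$ returns $H_{\rho}(x_{0})$ up to the nonzero factor $|\partial_{s_{1}}p\times\partial_{s_{2}}p|/|\rho(x_{0})|$; the explicit calculation above is the coordinate version of this. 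No flow-map estimates beyond Theorem 1 enter, since that theorem has already converted the variational LCS conditions into the pointwise orthogonality relations exploited here. I would close by stressing that the condition is only necessary --- $H_{\rho}=0$ along a candidate surface does not by itself produce a barrier --- and that, generically, the zero set $\{H_{\rho}=0\}$ is a two-dimensional surface that must contain every transport barrier of the corresponding type, which is what makes the reduced strainline and shearline computation described after the theorem effective.
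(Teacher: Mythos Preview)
Your proof is correct and shares the paper's overall architecture: reduce to Theorem~1 to obtain the pointwise orthogonality of the relevant vector field ($\xi_{3}$, $\xi_{1}$, or $n_{\pm}$) to $\mathcal{M}(t_{0})$, then invoke the general lemma that any $C^{1}$ vector field normal to a surface has vanishing helicity along that surface. Where you differ is in the proof of this lemma. The paper (Proposition~\ref{Helicity_prop}) argues integrally via Stokes' Theorem: for any patch $D\subset\mathcal{M}(t_{0})$ with boundary $C$, orthogonality kills the circulation $\int_{C}\rho\cdot dr$, hence $\int_{D}(\nabla\times\rho)\cdot n\,dA=0$ for all $D$, forcing the integrand to vanish pointwise. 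You instead give a purely local, differential argument by cross-differentiating the two orthogonality relations in a parametrization, which is exactly the coordinate version of ``$\alpha$ pulls back to zero, hence so does $d\alpha$'' that you mention. Your route avoids integration entirely and makes the Frobenius connection (which the paper develops separately in its Appendix~\ref{App:proof_thm2}) transparent in one step; the paper's Stokes argument is slightly shorter to state and sidesteps any regularity discussion of the parametrization. Either way the conclusion is the same, and your remarks on the sign ambiguity ($H_{-\rho}=H_{\rho}$) and on necessity-only are apt and match the paper's caveats.
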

\begin{description}
\item [{(i)}] Suppose that $\mathcal{M}(t)$ is a repelling hyperbolic
LCS. Then at all points $x_{0}\in\mathcal{M}(t_{0}),$ we must have
\begin{equation}
H_{\xi_{3}}(x_{0})=0.\label{frob_cond_str-1}
\end{equation}

\item [{(ii)}] Suppose that $\mathcal{M}(t)$ is an attracting hyperbolic
LCS. Then at all points $x_{0}\in\mathcal{M}(t_{0}),$ we must have
\begin{equation}
H_{\xi_{1}}(x_{0})=0.\label{frob_cond_str-1-1}
\end{equation}

\item [{(iii)}] Suppose that $\mathcal{M}(t)$ is a shear LCS. Consider
the two vector fields 
\[
n_{\pm}(x_{0})=\sqrt{\frac{\sqrt{\lambda_{1}(x_{0})}}{\sqrt{\lambda_{1}(x_{0})}+\sqrt{\lambda_{3}(x_{0})}}}\xi_{1}(x_{0})\pm\sqrt{\frac{\sqrt{\lambda_{3}(x_{0})}}{\sqrt{\lambda_{1}(x_{0})}+\sqrt{\lambda_{3}(x_{0})}}}\xi_{3}(x_{0}).
\]
 Then at all points $x_{0}\in\mathcal{M}(t_{0}),$ we must have
\begin{equation}
H_{n_{\pm}}(x_{0})=0\label{frob_cond_shr-1}
\end{equation}
for one choice of the sign in $\pm$.\end{description}
\begin{proof}
See \ref{App:proof_thm2}.\end{proof}
\begin{rem}
The problem of finding surfaces orthogonal to vector fields is locally
equivalent to finding surfaces tangent to two smooth vector fields.
For the existence of such tangent surfaces, the Frobenius Integrability
Theorem provides a necessary condition. This can be shown equivalent
to the zero helicity conditions described above (cf. \ref{App:proof_thm2}). 
\end{rem}
Theorem 3 provides specific scalar equations of the form \eqref{frob_cond_str-1}
and \eqref{frob_cond_shr-1} that a transport barrier $\mathcal{M}(t_{0})$
must satisfy. Rather than solving these equations numerically, we
locate the intersection curves of all potential transport barriers
with a family of two-dimensional reference surfaces. Out of all these
intersection curves, we then select the ones on which the appropriate
helicity condition in Theorem 2 vanishes. This leads to the following
result: 
\begin{thm}
{[}Necessary condition for intersections of transport barriers with
reference surfaces{]} Let $\mathcal{M}(t)\subset\mathbb{R}^{3}$ be
a material surface over the time interval $[t_{0},t_{0}+T]$. Also,
let $\Pi(s_{1})$ denote a smooth, one-parameter family of two-dimensional
orientable surfaces with smooth normal vector fields $n_{\Pi(s_{1})}(x)$. \end{thm}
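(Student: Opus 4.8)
The plan is to derive the statement from Theorem~1 (existence of transport barriers) together with the preceding necessary helicity condition, reducing everything to elementary linear algebra in $\mathbb{R}^{3}$. First I would fix the admissible normal field appropriate to the barrier type under test: set $\rho=\xi_{3}$ for a repelling hyperbolic LCS, $\rho=\xi_{1}$ for an attracting hyperbolic LCS, and $\rho=n_{+}$ or $\rho=n_{-}$ for a shear LCS. By Theorem~1, if $\mathcal{M}(t)$ is a barrier of that type then $\rho(x_{0})\perp T_{x_{0}}\mathcal{M}(t_{0})$ for every $x_{0}\in\mathcal{M}(t_{0})$, so $\rho(x_{0})$ spans the one-dimensional normal line of $\mathcal{M}(t_{0})$ at $x_{0}$.

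Next I would analyse the intersection $\gamma=\mathcal{M}(t_{0})\cap\Pi(s_{1})$ for a fixed value of $s_{1}$. At a point $x_{0}\in\gamma$ where $\mathcal{M}(t_{0})$ and $\Pi(s_{1})$ meet transversally --- equivalently, where $\rho(x_{0})\not\parallel n_{\Pi(s_{1})}(x_{0})$ --- the intersection is locally a smooth curve whose tangent line is $T_{x_{0}}\mathcal{M}(t_{0})\cap T_{x_{0}}\Pi(s_{1})$. Any vector spanning this line is orthogonal both to $\rho(x_{0})$ (being tangent to $\mathcal{M}(t_{0})$) and to $n_{\Pi(s_{1})}(x_{0})$ (being tangent to $\Pi(s_{1})$); since these two normals are linearly independent in $\mathbb{R}^{3}$, the tangent must be a nonzero scalar multiple of $\rho(x_{0})\times n_{\Pi(s_{1})}(x_{0})$. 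Hence, after an orientation-preserving reparametrization, $\gamma$ is a trajectory of the autonomous ODE
\[
r'(s_{2})=\rho\bigl(r(s_{2})\bigr)\times n_{\Pi(s_{1})}\bigl(r(s_{2})\bigr),
\]
which I would record as the \emph{reduced strainline} ODE in the hyperbolic cases ($\rho=\xi_{3}$ or $\xi_{1}$) and the \emph{reduced shearline} ODE in the shear case ($\rho=n_{\pm}$). Because $\rho\times n_{\Pi(s_{1})}$ is automatically orthogonal to $n_{\Pi(s_{1})}$, this vector field is tangent to $\Pi(s_{1})$, so the flow it generates is intrinsic to the reference surface and can, if desired, be pushed down to a genuine planar ODE in any local parametrization of $\Pi(s_{1})$.

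The helicity constraint then comes essentially for free: by the necessary-condition theorem (vanishing helicity of the relevant admissible normal field along any barrier), $H_{\rho}(x_{0})=0$ at every $x_{0}\in\mathcal{M}(t_{0})$, so in particular $H_{\rho}$ vanishes identically along $\gamma\subset\mathcal{M}(t_{0})$. Combining the two observations yields the asserted necessary condition: the trace of a transport barrier of a given type on $\Pi(s_{1})$ must be a union of trajectories of the corresponding reduced ODE along which the appropriate helicity vanishes, with $\rho=\xi_{3}$, $\rho=\xi_{1}$, or $\rho=n_{\pm}$ (for one choice of sign) according to the barrier type; collecting these curves over a parametrized sweep of $s_{1}$ then recovers a parametrized barrier surface $p(s_{1},s_{2})$.

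The hard part is not the algebra but the regularity bookkeeping around non-transversal contact. Where $\rho$ becomes parallel to $n_{\Pi(s_{1})}$ the cross product degenerates and $\mathcal{M}(t_{0})\cap\Pi(s_{1})$ need not be a one-dimensional manifold at all, so the conclusion can only be asserted on the (open, generically full-measure) subset of $\Pi(s_{1})$ on which the contact is transversal, and the reduced ODE must be read there as a direction field rather than a globally single-valued vector field. A secondary caveat to spell out is the sign ambiguity: the eigenvectors $\xi_{i}$, hence $n_{\pm}$, are defined only up to sign, so one argues on simply connected patches of $\gamma$ (or fixes a continuous local orientation), and in the shear case the two branches $n_{+}$ and $n_{-}$ are treated separately.
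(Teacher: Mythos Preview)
Your proposal is correct and follows essentially the same approach as the paper: invoke Theorem~1 to pin down the barrier normal $\rho$, observe that the tangent to $\mathcal{M}(t_0)\cap\Pi(s_1)$ is orthogonal to both $\rho$ and $n_{\Pi(s_1)}$ and hence parallel to their cross product, and then cite Theorem~2 for the helicity constraint. If anything, you are more careful than the paper's own proof in spelling out the transversality hypothesis needed for the intersection to be a smooth curve and in flagging the sign ambiguity in the eigenvector fields; the only cosmetic difference is that the paper writes the cross product as $n_{\Pi(s_1)}\times\rho$ rather than $\rho\times n_{\Pi(s_1)}$, which is immaterial given the direction-field interpretation you already note.
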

\begin{description}
\item [{(i)}] Suppose that $\mathcal{M}(t_{0})$ is a repelling hyperbolic
LCS. Then the intersection curve $\gamma_{s_{1}}=\Pi(s_{1})\cap\mathcal{M}(t_{0})$
is a trajectory of a two-dimensional differential equation on $\Pi(s_{1})$,
given by 
\begin{equation}
\gamma_{s_{1}}^{\prime}(s_{2})=n_{\Pi(s_{1})}(\gamma_{s_{1}}(s_{2}))\times\xi_{3}(\gamma_{s_{1}}(s_{2})).\label{eq:redstrain-ODE}
\end{equation}
This trajectory must also satisfy the condition 
\begin{equation}
H_{\xi_{3}}(\gamma_{s_{1}}(s_{2}))=0.\label{eq:strain-helicity}
\end{equation}

\item [{(ii)}] Suppose that $\mathcal{M}(t_{0})$ is an attracting hyperbolic
LCS. Then the intersection curve $\gamma_{s_{1}}=\Pi(s_{1})\cap\mathcal{M}(t_{0})$
is a trajectory of a two-dimensional differential equation on $\Pi(s_{1})$,
given by 
\begin{equation}
\gamma_{s_{1}}^{\prime}(s_{2})=n_{\Pi(s_{1})}(\gamma_{s_{1}}(s_{2}))\times\xi_{1}(\gamma_{s_{1}}(s_{2})).\label{eq:redstrain-ODE-1}
\end{equation}
This trajectory must also satisfy the condition 
\begin{equation}
H_{\xi_{1}}(\gamma_{s_{1}}(s_{2}))=0.\label{eq:strain-helicity-1}
\end{equation}

\item [{(iii)}] Suppose that $\mathcal{M}(t_{0})$ is a shear LCS. Then
the intersection curve $\gamma_{s_{1}}=\Pi(s_{1})\cap\mathcal{M}(t_{0})$
is a trajectory of a two-dimensional differential equation on $\Pi(s_{1})$,
given by 
\begin{equation}
\gamma_{s_{1}}^{\prime}(s_{2})=n_{\Pi(s_{1})}(\gamma_{s_{1}}(s_{2}))\times n_{\pm}(\gamma_{s_{1}}(s_{2})),\label{eq:redshear-ODE}
\end{equation}
for some choice of the sign in $\pm$. This trajectory must also satisfy
the condition 
\begin{equation}
H_{n_{\pm}}(\gamma_{s_{1}}(s_{2}))=0\label{eq:shear-helicity}
\end{equation}
with the same choice of the sign. If the trajectory $\gamma_{s_{1}}(s_{2})$
is a closed orbit, then $\mathcal{M}(t)$ is an elliptic barrier.\end{description}
\begin{proof}
See \ref{App:proof_thm3}.
\end{proof}
~~
\begin{rem}
Theorem 3 yields a local parametrization $p(s_{1},s_{2})=\gamma_{s_{1}}(s_{2})$
for transport barriers in the form of parametrized families of smooth
curves $\gamma_{s_{1}}(s_{2})$.
\end{rem}

\section{Computation of transport barriers }

\label{section:comp} Theorem 3 provides a practical algorithm for
the computation of transport barriers as different types of LCSs in
three-dimensional flows. The barriers can be reconstructed from their
intersections with a family of orientable hypersurfaces. 

In the simplest case, these hypersurfaces are just two-dimensional
planes. For this case, we summarize below the extraction of hyperbolic
LCS (generalized stable and unstable manifolds) and elliptic LCS (invariant
cylinders and generalized KAM tori). Further hints on the numerical
implementation of these algorithmic steps can be found in Appendix
D.

\subsection{Algorithm for hyperbolic LCSs\label{sub:Algorithm-for-hyperbolic}}
\begin{description}
\item [{H1}] Compute the Cauchy--Green strain tensor $C_{t_{0}}^{t_{0}+T}$
and its dominant eigenvector $\xi_{3}$ on a two-dimensional grid
$\mathcal{G}_{0}$ defined on the reference plane $\Pi(s_{1})$ 
\item [{H2}] Pick a sparser grid $\mathcal{G}_{1}$ of initial conditions.
Obtain \emph{reduced strainlines} $\gamma_{s_{1}}(s_{2})$ as trajectories
of the ODE \eqref{eq:redstrain-ODE}, starting from points of $\mathcal{G}_{1}$
satisfying $\left|H_{\xi_{3}}\right|=\left|\left<\nabla\times\xi_{3},\xi_{3}\right>\right|<\epsilon_{0}$
for some threshold parameter $\epsilon_{0}$. 
\item [{H3}] Integrate such reduced strainlines as long as the running
average of $\left|H_{\xi_{3}}\right|$ stays below $\epsilon_{0}$. 
\item [{H4}] Filter the reduced strainline segments so obtained to find
the ones that approximate the zero sets of $H_{\xi_{3}}$ most closely.
Specifically, if the Hausdorff distance 
\[
d(\gamma_{s_{1}},\tilde{\gamma}_{s_{1}})=\max_{x\in\gamma_{1}}\left(\min_{y\in\tilde{\gamma}_{s_{1}}}\|x-y\|\right)+\max_{x\in\tilde{\gamma}_{s_{1}}}\left(\min_{y\in\gamma_{1}}\|x-y\|\right)
\]
two strainline segments $\gamma_{1}$ and $\tilde{\gamma}_{s_{1}}$
are smaller than a small threshold value $d_{0}$, then discard either
$\gamma_{1}$ or $\tilde{\gamma}_{s_{1}}$.
\item [{H5}] Vary the parameter $s_{1}$ in the definition of the plane
family $\Pi(s_{1})$ to obtain uniform coverage of the domain of interest.
Repeat H1-H4 for each $s_{1}$. Obtain repelling hyperbolic LCSs by
fitting a surface to the parametrized curve family $\gamma_{s_{1}}(s_{2}),$
as described in Remark 4.
\item [{H6}] Replace the eigenvector $\xi_{3}$ with $\xi_{1}$ in steps
H1-H4 to obtain \emph{reduced stretchlines} as trajectories of \eqref{eq:redstrain-ODE-1}.
Construct attracting hyperbolic LCSs following step H5.
\end{description}

\subsection{Algorithm for elliptic LCSs\label{sub:Algorithm-for-closed}}
\begin{description}
\item [{SH1}] Compute the Cauchy--Green strain tensor $C_{t_{0}}^{t_{0}+T}$
and its eigenvectors $\xi_{1}$ and $\xi_{3}$ on a two-dimensional
grid $\mathcal{G}_{0}$ defined on the reference plane $\Pi(s_{1})$ 
\item [{SH3}] Pick a sparser grid $\mathcal{G}_{1}$ of initial conditions.
Launch \emph{reduced shearlines} $\gamma_{s_{1}}(s_{2})$ as trajectories
of the ODE \ref{eq:redshear-ODE}, from points of $\mathcal{G}_{1}$
satisfying $\left|H_{n_{\pm}}\right|=\left|\left<\nabla\times\xi_{3},\xi_{3}\right>\right|<\epsilon_{0}$
for some threshold parameter $\epsilon_{0}$. 
\item [{SH3}] Integrate such reduced shearlines as long as the running
average of $\left|H_{n_{\pm}}\right|$ stays below $\epsilon_{0}$. 
\item [{SH4}] Keep only reduced shearlines that form limit cycles.
\item [{SH5}] Vary the parameter $s_{1}$ in the definition of the plane
family $\Pi(s_{1})$ to obtain uniform coverage of the three-dimensional
domain of interest. Repeat SH1-SH4 for each $s_{1}$. Starting from
a closed shearline $\gamma_{s_{1min}}(s_{2})$ on the plane $\Pi(s_{1min})$
corresponding to the lowest value of the parameter $s_{1}$, obtain
a discretized approximation $\gamma_{s_{1}}(s_{2})$ to a closed shear
barrier by always selecting the closest closed reduced shearline in
the planes $\Pi(s_{1})$ under increasing $s_{1}$. Obtain elliptic
LCS by fitting a surface to the curve-family $\gamma_{s_{1}}(s_{2})$,
as described in Remark 4. 
\end{description}

\section{Examples}

\label{Section:results}

\subsection{Steady ABC flow}

\label{Section:ABC_steady}

As a first example, we consider the steady ABC flow 
\begin{equation}
\begin{split} & \dot{x}=A\sin z+C\cos y,\\
 & \dot{y}=B\sin x+A\cos z,\\
 & \dot{z}=C\sin y+B\cos x,
\end{split}
\label{ABC_steady}
\end{equation}
an exact solution of Euler's equation. We select the parameter values
$A=\sqrt{3}$, $B=\sqrt{2}$ and $C=1.0$. This well-studied set of
parameter values yields the Poincare map shown in Fig. \ref{fig:shear_steady}.
We first use the theory developed here to construct the full two-dimensional
transport barriers suggested by this Poincare map. Because the flow
is steady, the transport barriers we seek are also invariant manifolds
in the phase space, not just in the extended phase space. 

We therefore only need to carry out the computational steps H1-H4
and SH1-SH4 of Section \ref{section:comp} to obtain intersection
curves between barriers and a single reference plane $\Pi$. We then
advect these intersection curves under the flow map to obtain the
full two-dimensional barriers. 

In Sections \ref{Section:ABC_periodic} and \ref{Section:ABC_aperiodic},
we consider temporally periodic and aperiodic versions of \eqref{ABC_steady}
where this simple approach will no longer suffice. The present steady
case is only considered here to provide a consistency check on a well-studied
steady flow.

\subsubsection{Elliptic LCSs in the steady ABC flow}

We first perform the computational steps SH1-SH4 of the previous section
in one of the vortical regions seen in the Poincare map plot of Fig.
\ref{fig:shear_steady}. The lower panels of the same figure show
orbits the Poincare map in blue (color only in the online version),
as well as closed reduced shearlines (green) obtained from the computational
steps SH1-SH4 on the plane
\[
\Pi=\left\{ (x,y,z)\,:\, z=0\right\} 
\]
or two different integration times. In both cases, a uniform grid
$\mathcal{G}_{0}$ of $1000\times1000$ initial points was used to
compute the Cauchy--Green strain tensor. The tolerance parameter in
the computational step SH3 is chosen to be $\epsilon_{0}=10^{-2}$.

While the plots in Fig. \ref{fig:shear_steady} show curves in the
$(x,y)$ plane, the analysis is inherently three-dimensional. Indeed,
computing the vector $n_{\pm}$ in equations (\ref{eq:redshear-ODE})-(\ref{eq:shear-helicity})
requires fully three-dimensional trajectory integration.

\begin{figure}[h!]
\begin{centering}
\includegraphics[scale=0.3]{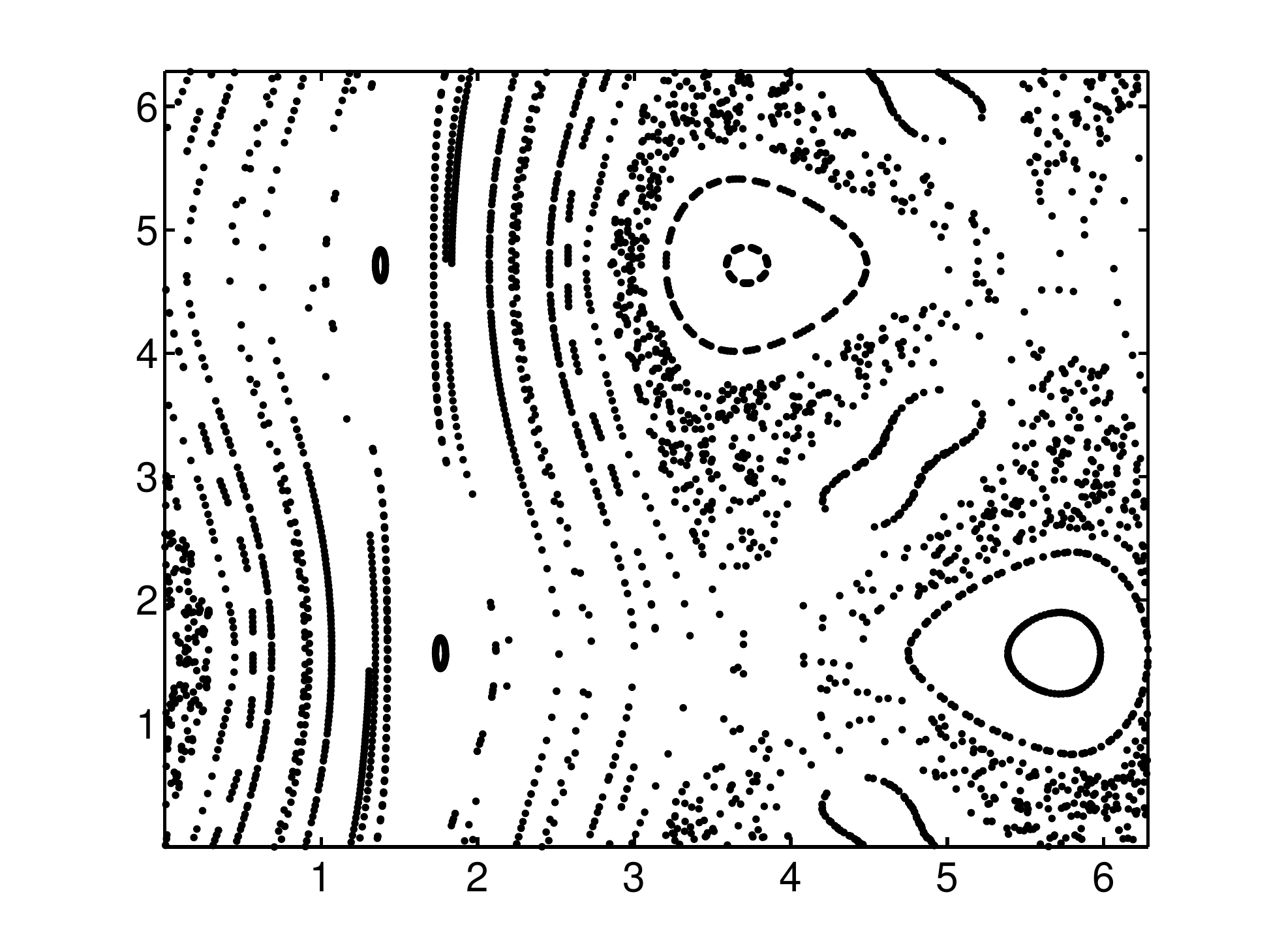}
\par\end{centering}

\centering{}\includegraphics[scale=0.35]{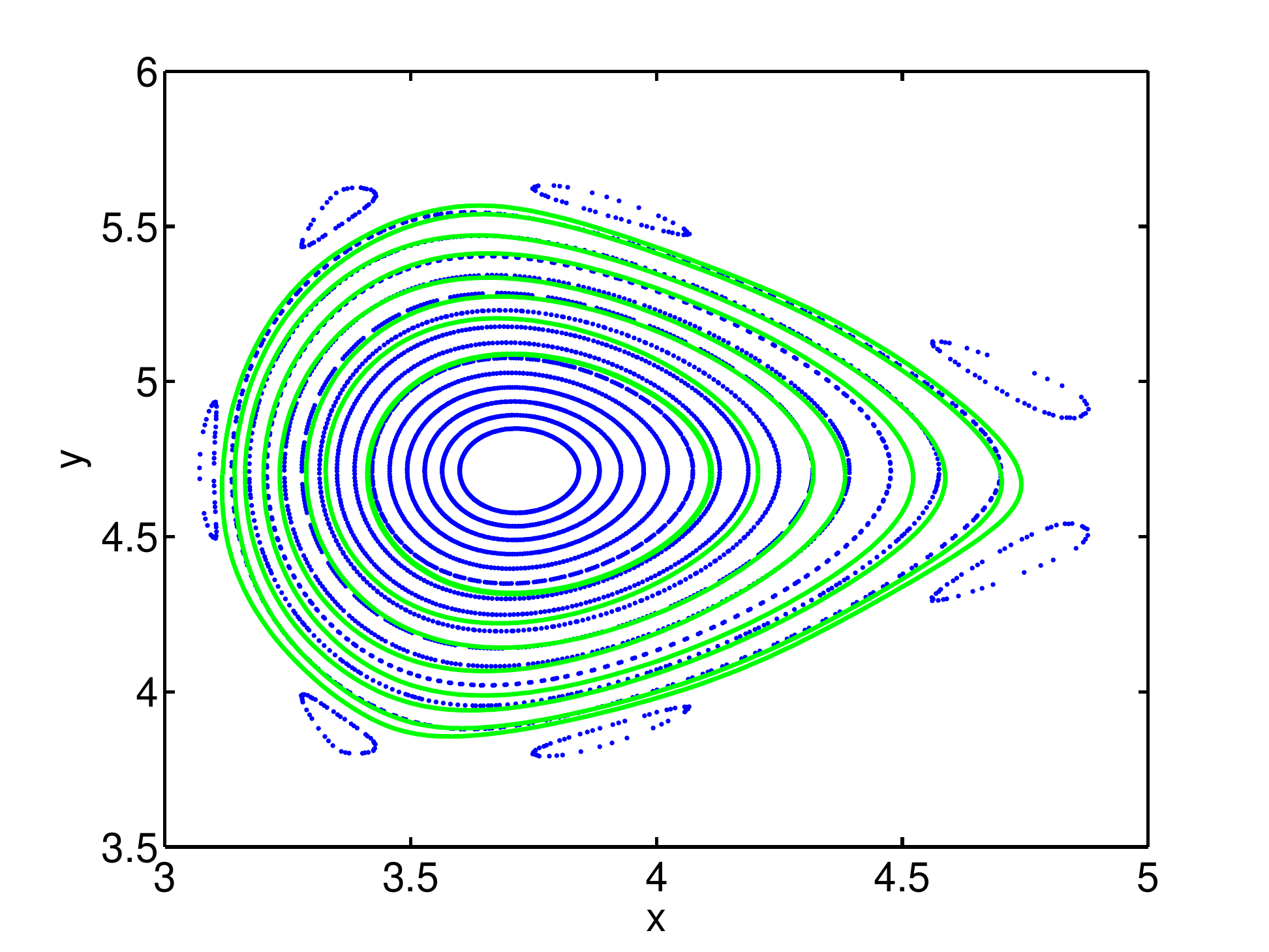}
~~~~~~~~\includegraphics[scale=0.35]{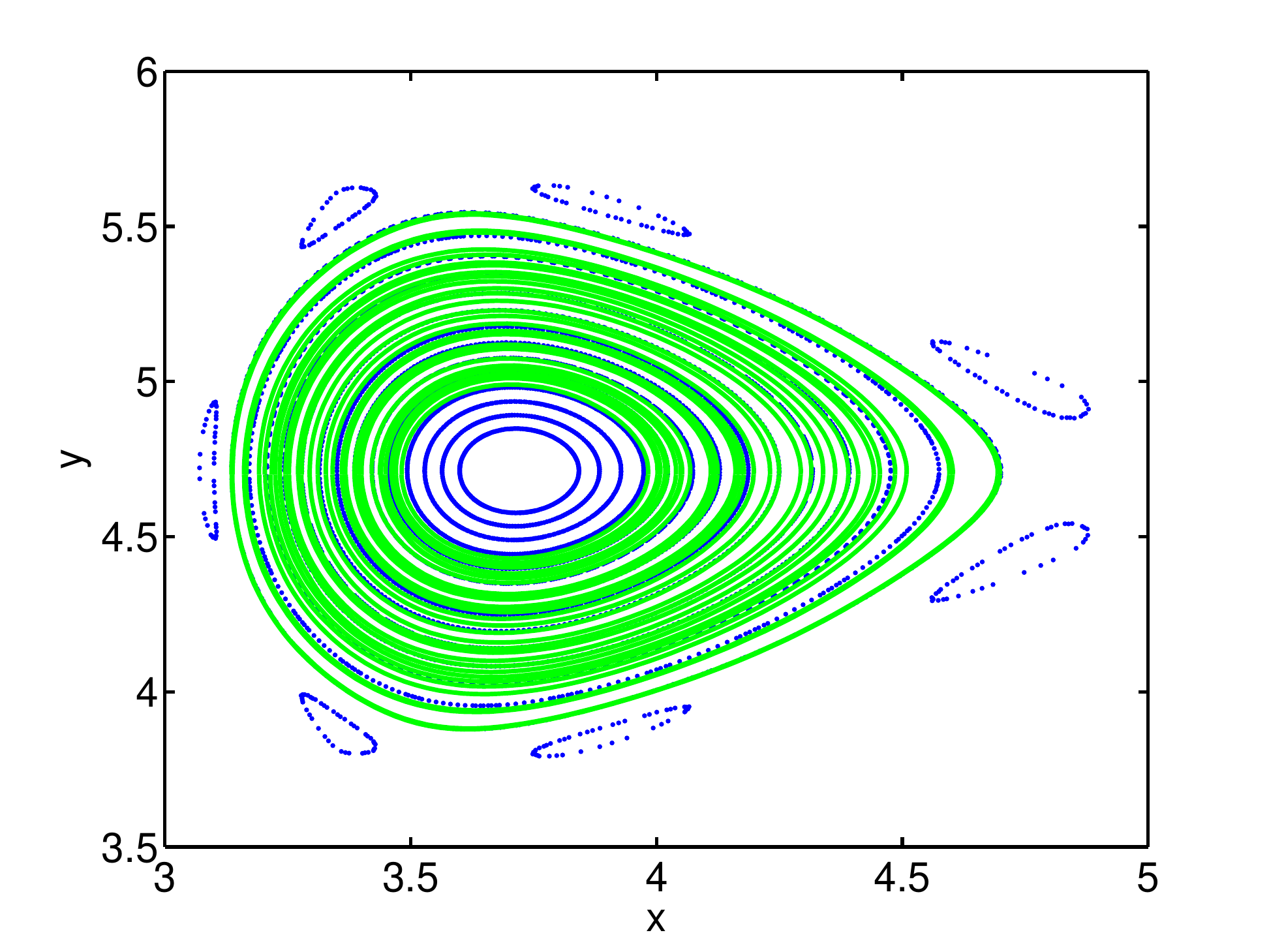}
\caption{\emph{Top}: Poincare map for the steady ABC flow on the $z=0$ plane.
\emph{Bottom:} Closed reduced shearlines on the plane $z=0$ that
approximate invariant tori for the steady ABC flow. The time interval
used in their construction was $[0,40]$ for the left panel and $[0,150]$
for the right panel. }
\label{fig:shear_steady} 
\end{figure}

By the periodic nature of the phase space, the shear LCS obtained
from the advection of closed, reduced shearlines are two-dimensional
tori. To bring out the toroidal nature of these barriers, we introduce
new coordinates with the help of the approximate spatial core $(x_{0}(z),y_{0}(z),z)$
obtained by advecting the vortical center point of Fig. \ref{fig:shear_steady}.
Using this center curve, we introduce the toroidal coordinate system
\begin{equation}
\begin{split} & \bar{x}=\left[x-x_{0}(z)+R_{1}\right]\cos(z),\\
 & \bar{y}=\left[x-x_{0}(z)+R_{1}\right]\sin(z),\\
 & \bar{z}=R_{2}\left[y-y_{0}(z)\right],
\end{split}
\label{eq:trans2}
\end{equation}
where $R_{i}$ are positive constants. A nested family of invariant
tori obtained from this transformation is shown in Fig. \ref{fig:shear_steady_tori}.

\begin{figure}[h!]
\centering{}\includegraphics[width=0.9\textwidth]{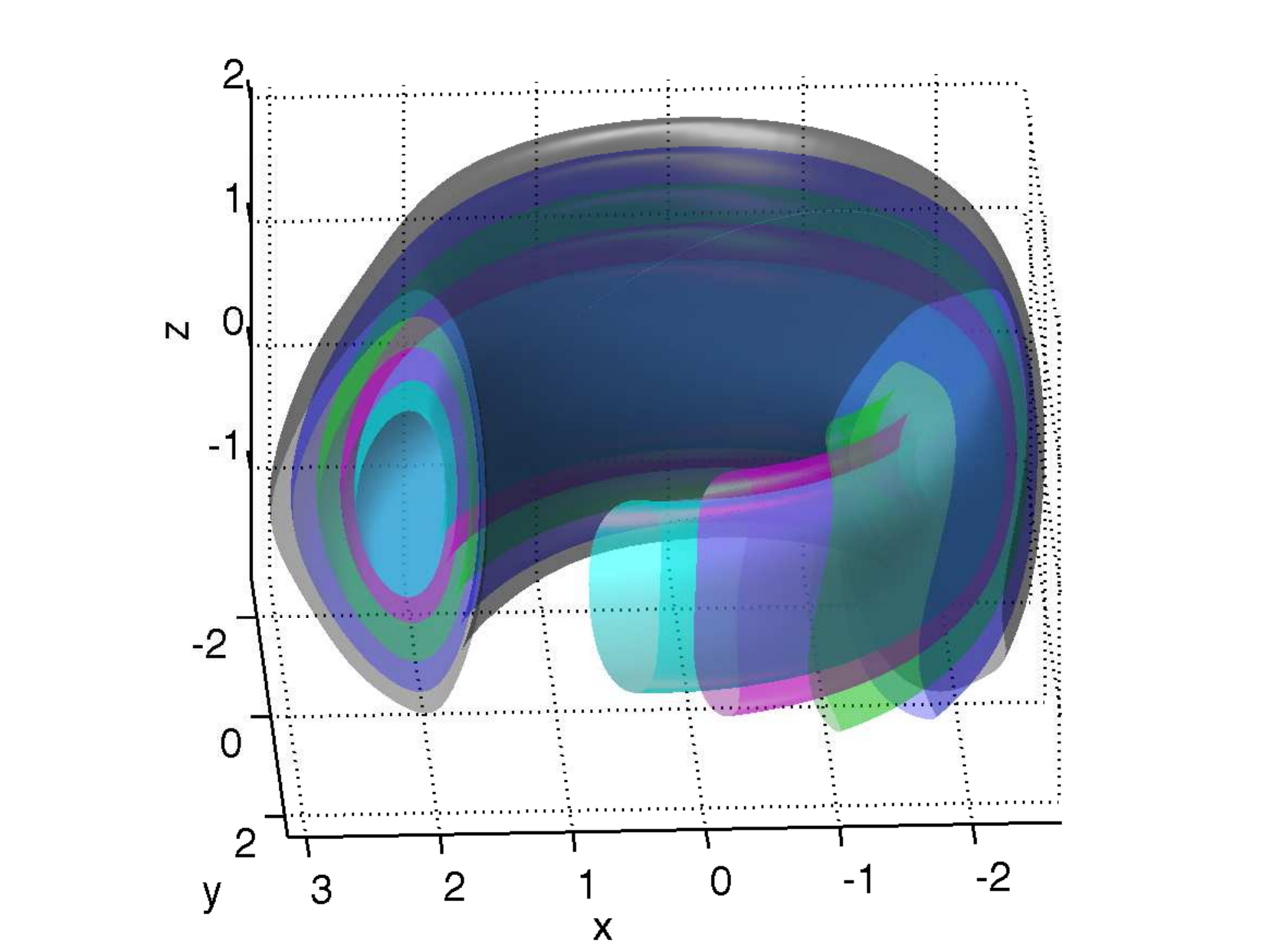}
\caption{An embedding of a nested elliptic LCS family in the steady ABC flow.
These LCSs approximate invariant tori from a finite-time observation
over the time interval $[0,40]$.}
\label{fig:shear_steady_tori} 
\end{figure}

Fig. \ref{fig:hel_shear_steady} shows the shear helicities $H_{n_{\pm}}$
along the line segment $y=4.7$, with dots marking the locations of
closed shearlines obtained from our finite-time analysis. Despite
the significant numerical noise in the computation of the shear helicity,
the zeros of $H_{n_{\pm}}$ move closer and closer to the computed
shearlines, validating these shearlines as curves on a shear LCS in
the sense of Definition 1.

\begin{figure}[h!]
\begin{centering}
\includegraphics[scale=0.35]{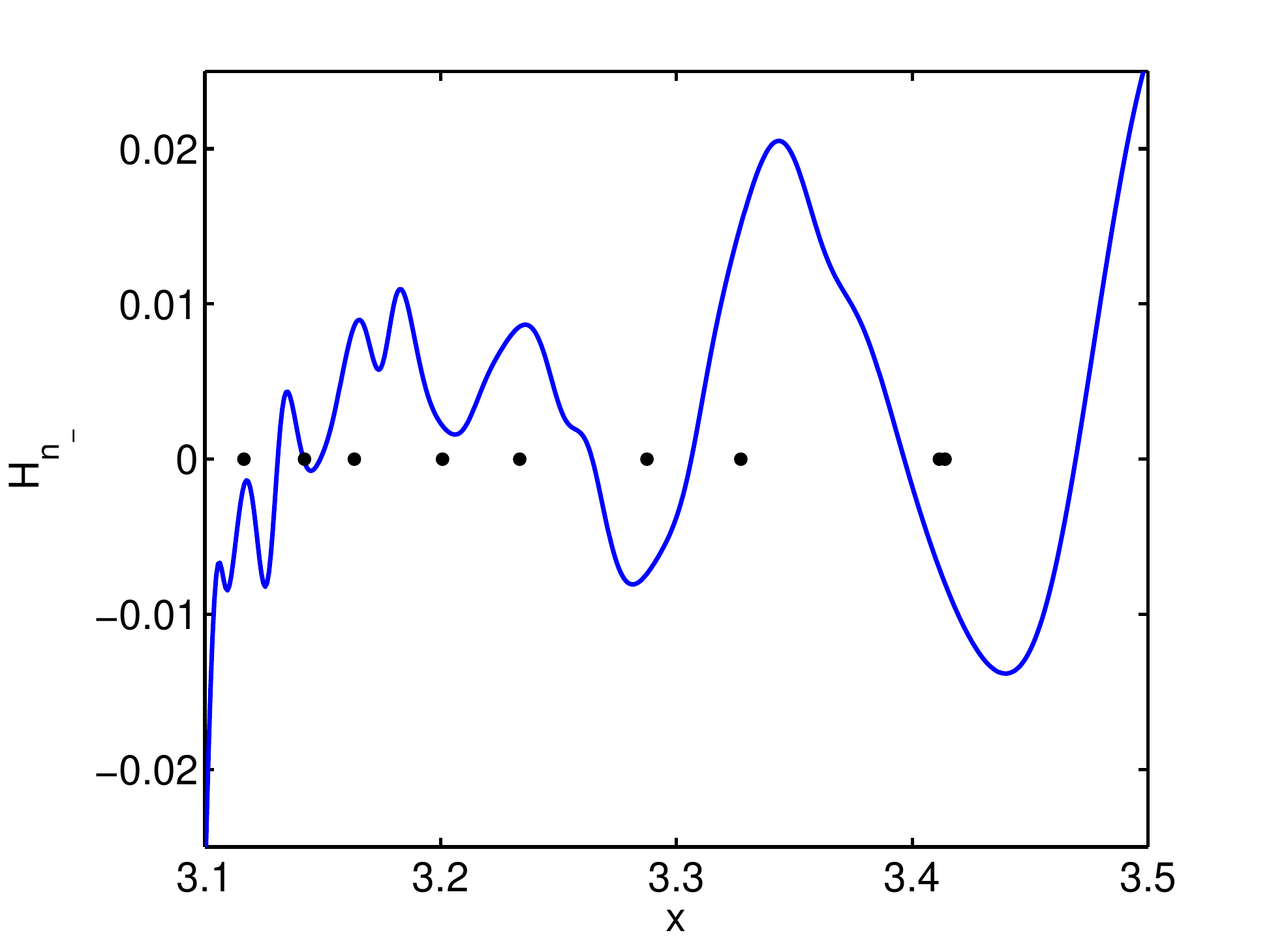} \includegraphics[scale=0.35]{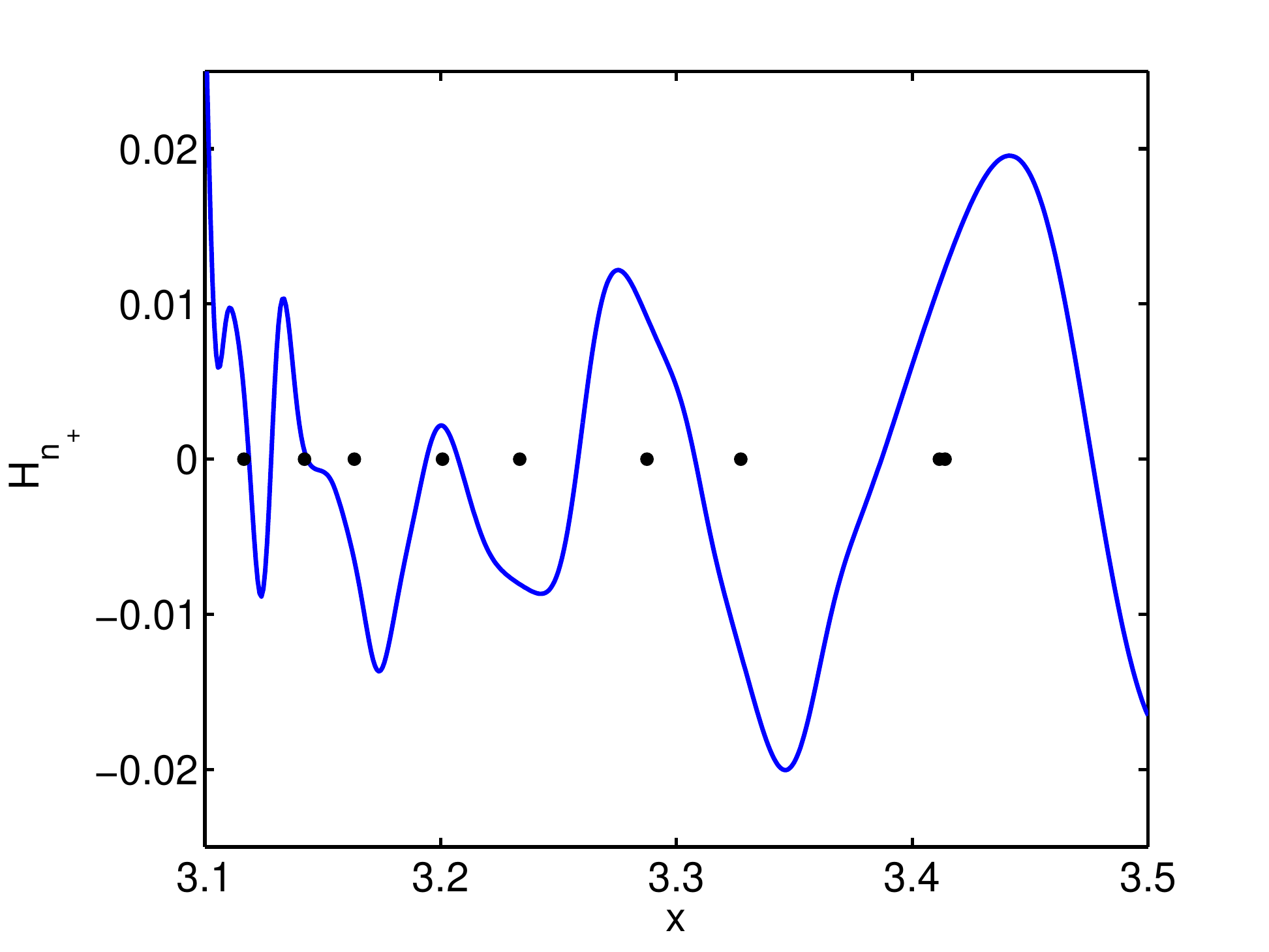} 
\par\end{centering}

\centering{}\includegraphics[scale=0.35]{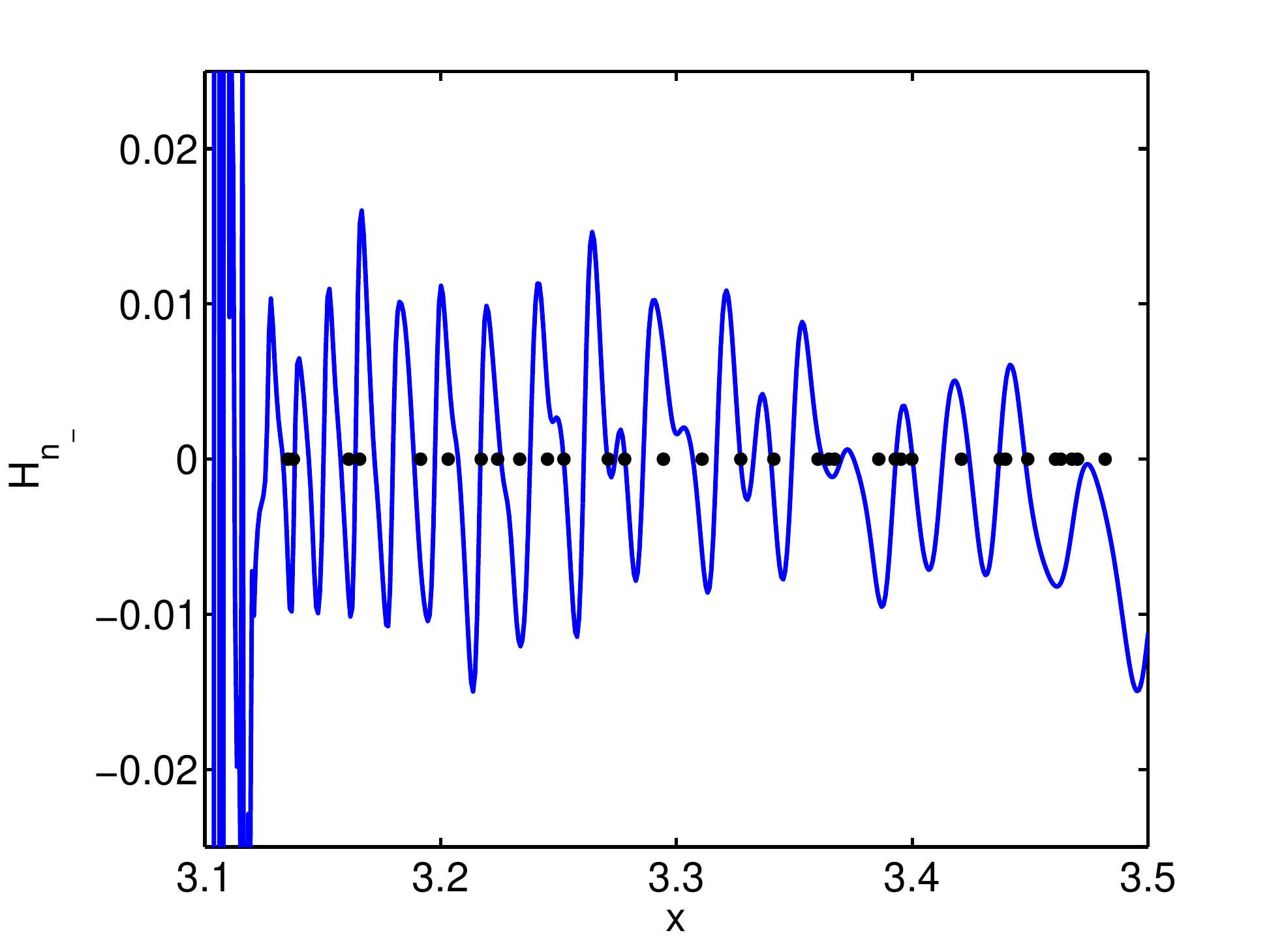} \includegraphics[scale=0.35]{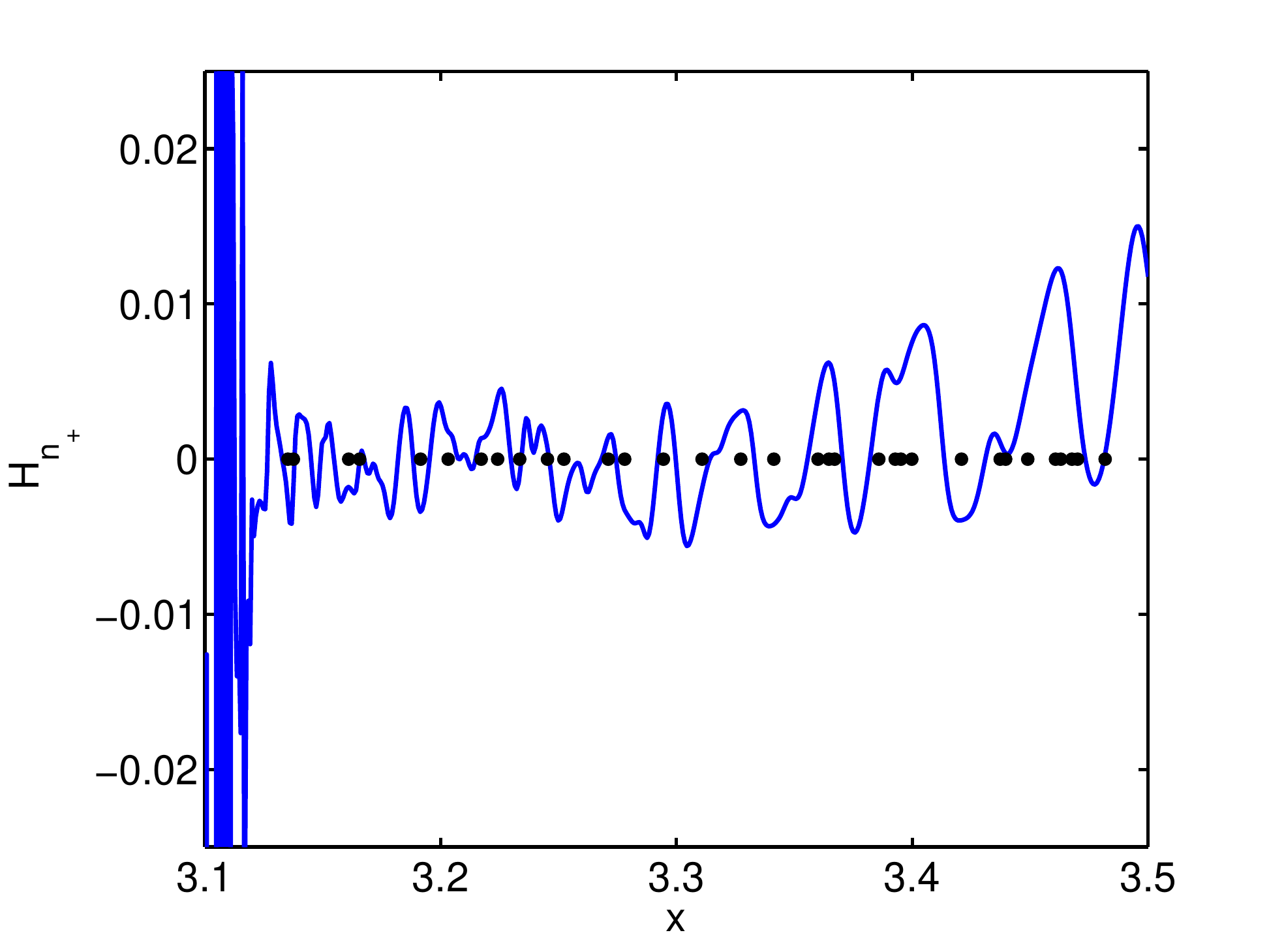}
\caption{Upper panels: The helicities $H_{n_{\pm}}$ of the shear vector fields
$n_{\pm}$, respectively, along the line $y=4.7$ for integration
length $T=40$. Black dots indicate the $x$ coordinate of the closed
shearlines shown in Fig. \ref{fig:shear_steady}. Lower panels: the
same for integration time $T=150.$}
\label{fig:hel_shear_steady} 
\end{figure}

\subsubsection{Repelling hyperbolic LCSs in the steady ABC flow}

We compute the Cauchy-Green strain tensor $C_{0}^{3}$ over a $500\times500$
grid on the plane $z=0$. The tolerance parameter in the computational
step H3 is chosen to be $\epsilon_{0}=10^{-4}$. Under this tolerance
level, intersections or repelling hyperbolic LCSs with the $z=0$
plane are shown in the left panel of Fig. \ref{fig:strlines_zero_helicity},
obtained as parametrized curves. Also shown is the vertical line $y=0.5$,
along which we compute the strain helicity $H_{\xi_{3}}$ (cf. the
right panel of Fig. \ref{fig:strlines_zero_helicity}). The latter
figure illustrates that the reduced strainlines shown in the left
panel are indeed intersections of the $z=0$ plane with repelling
hyperbolic LCSs in the sense of Definition 1. 

\begin{figure}
\centering{}\includegraphics[scale=0.35]{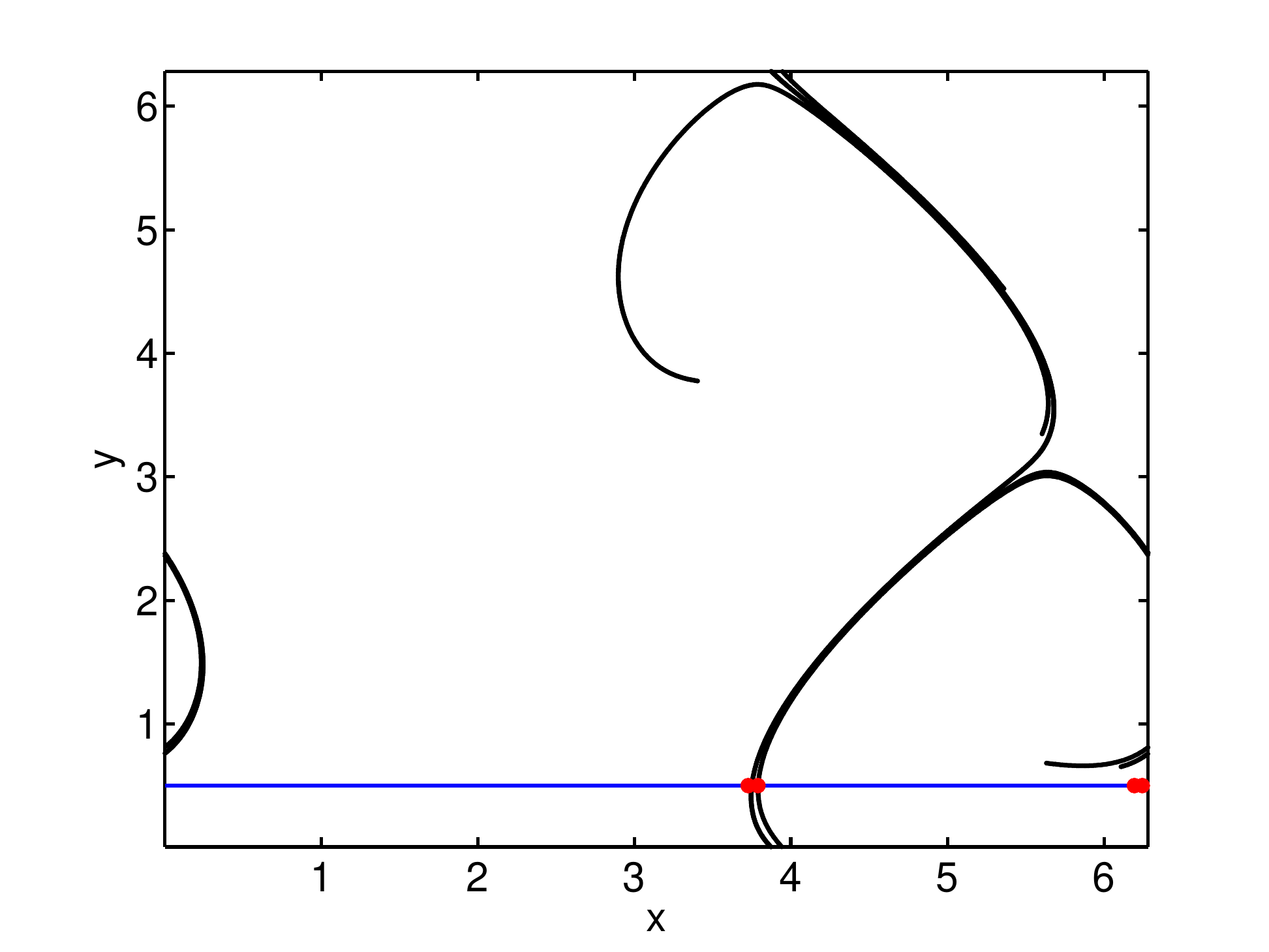}\includegraphics[scale=0.35]{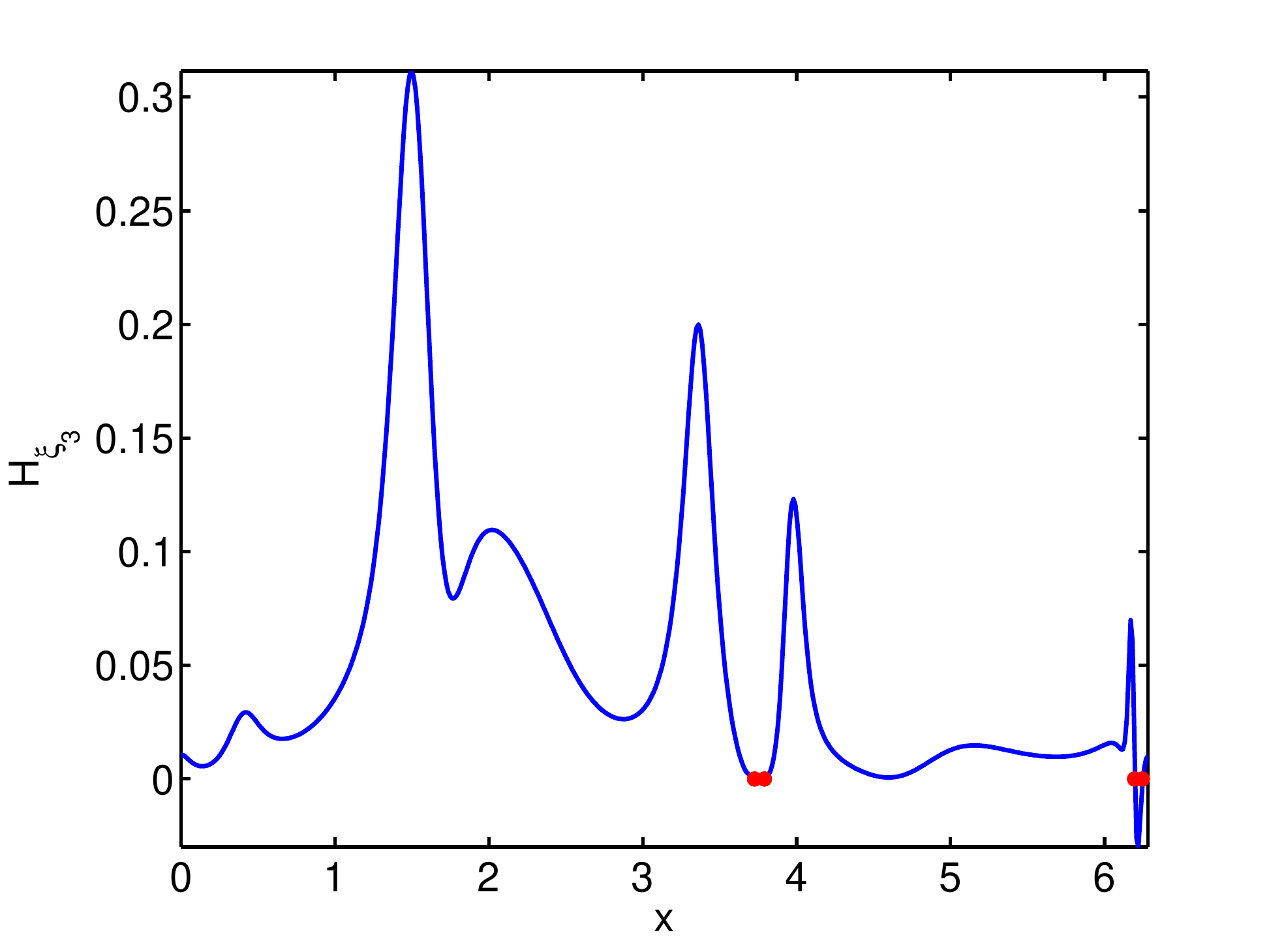}
\caption{Left panel: Reduced strainlines of minimal average helicity, with
the line $y=0.5$ shown in blue, and with red dots indicating points
where the helicity $H_{\xi_{3}}$ is exactly zero. Right panel: the
helicity $H_{\xi_{3}}$ plotted as a function of $x$ along the line
$y=0.5$, with its zeros highlighted in red.}
\label{fig:strlines_zero_helicity} 
\end{figure}

To illustrate the dynamical impact of the barrier surface emanating
from the reduced strainlines, we select one of these strainlines (shown
in green in the left panel of Fig. \ref{fig:strain_steady}). We perturb
this reduced strainline segment in the $x$ direction by $\pm0.01$
to obtain the blue and red curves shown in the same panel. We then
advect all three parameterized curves from $t_{0}=0$ to $t_{0}+T=3$
to obtain the surfaces shown in the left panel of Fig. \ref{fig:strain_steady}.
Note that the blue and red curves have noticeable upward and downward
$z$-drifts, respectively, while the surface evolving from the green
reduced strainline has no $z$-drift.

\begin{figure}
\includegraphics[width=0.3\textwidth]{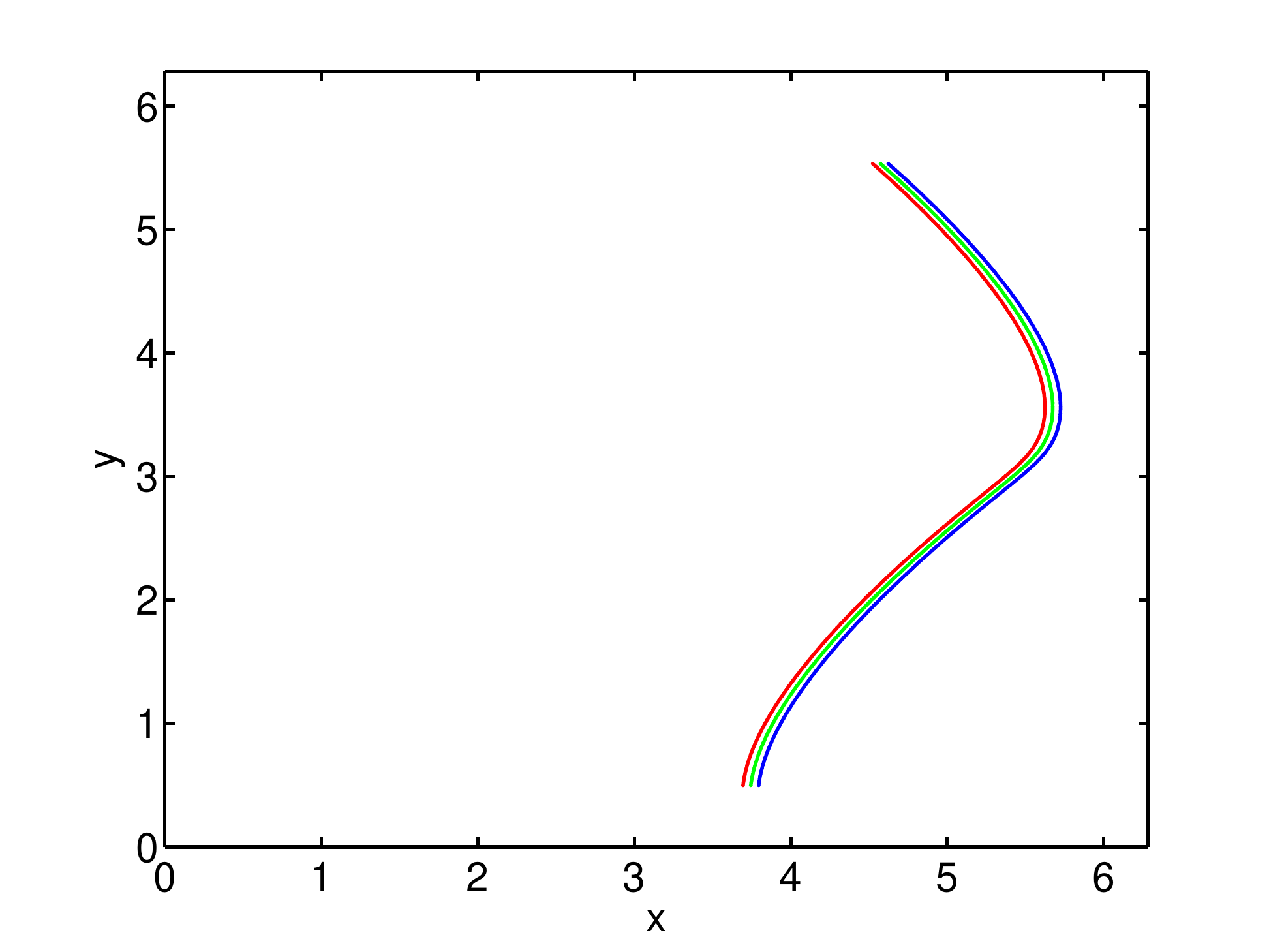}\includegraphics[width=0.75\textwidth]{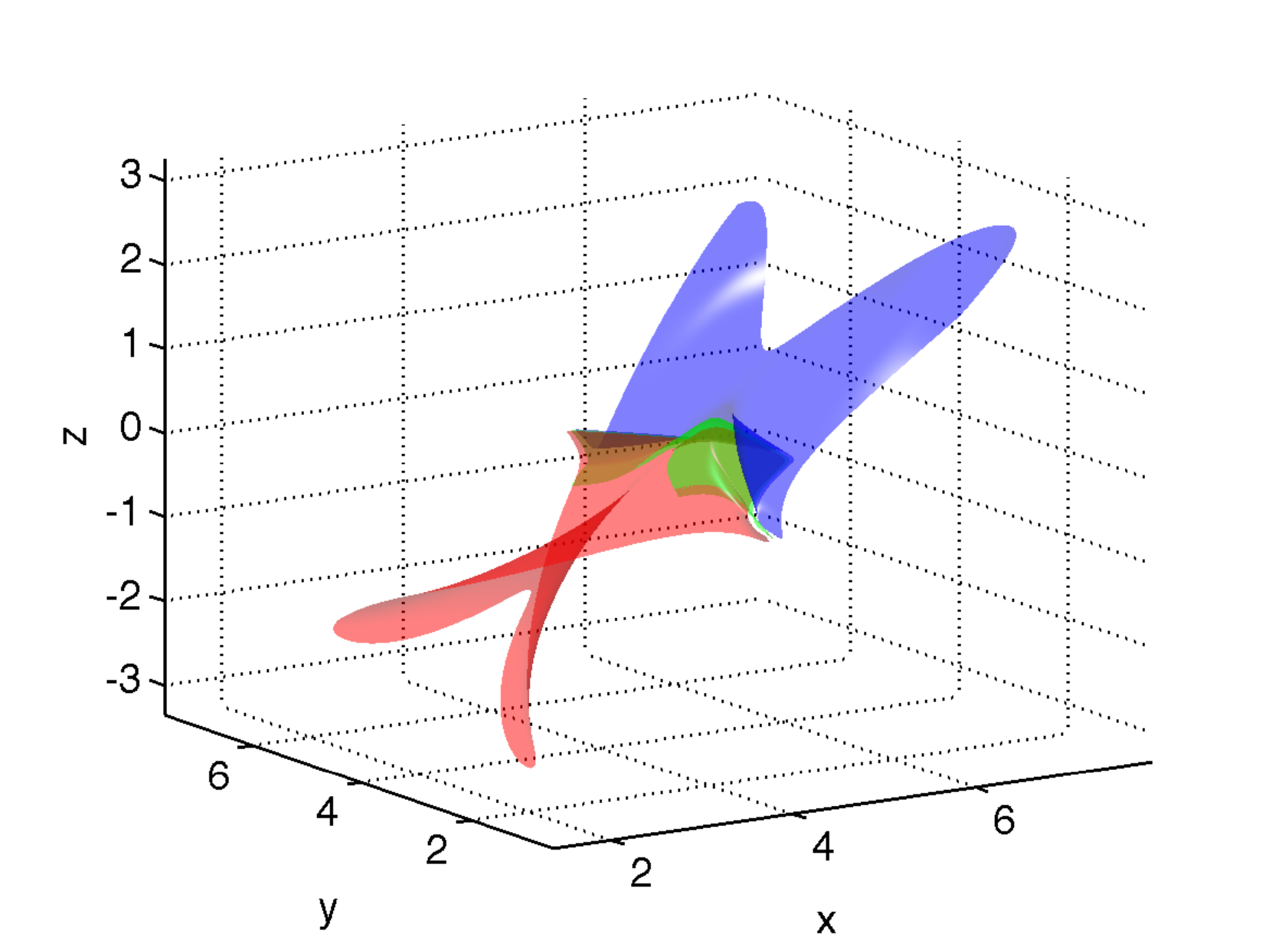}
\caption{Left panel: reduced strainline of zero helicity (green), and its perturbation
to the left (red) and to the right (blue) by $0.01$. Right panel:
Invariant surfaces through these three curves obtained by advection
under the flow map $F_{0}^{30}$. They illustrate the repelling barrier
property of the green LCS through the green reduced strainline. }
\label{fig:strain_steady}
\end{figure}

\subsection{Time-periodic ABC flow}

\label{Section:ABC_periodic} We now consider a temporally periodic
version of the ABC flow, given by

\begin{equation}
\begin{split} & \dot{x}=\left(A+0.1\sin t\right)\sin z+C\cos y,\\
 & \dot{y}=B\sin x+\left(A+0.1\sin t\right)\cos z,\\
 & \dot{z}=C\sin y+B\cos x.
\end{split}
\label{ABC_per_equations}
\end{equation}
The first return map to the plane $z=0$ is now a non-autonomous map.
Therefore, to gain insight into the flow from classical tools, only
a fully three-dimensional temporal Poincare map can be used. This
would result in spatially scattered points, as opposed to the sharply
defined shear and hyperbolic barriers that we will continue to obtain
form our approach.

\subsubsection{Elliptic LCSs in the time-periodic ABC flow }

We compute the Cauchy-Green strain tensor $C_{0}^{30\pi}$ over a
$500\times500$ grid in the plane $z=0$. This integration length
is equal to $15$ iterations of the temporal Poincare map $F_{0}^{2\pi}$.
The tolerance parameter in the computational step SH3 is chosen to
be $\epsilon_{0}=10^{-2}$. Fig. \ref{fig:shear_periodic} shows the
closed reduced shearlines we find as limit cycles of the equation
(\ref{eq:redshear-ODE}). The shear-helicity zero distribution along
these curves is similar to that in the steady case, and hence is omitted
here for brevity.

Next, we iterate the outermost closed reduced shearline under the
Poincare map $F_{0}^{2\pi}$. The result is a two-dimensional invariant
torus for $F_{0}^{2\pi}$, shown in Fig. \ref{fig:shear_periodic}
under the embedding (\ref{eq:trans2}). This torus is an intersection
of a three-dimensional invariant torus of the full, spatially and
temporally periodic flow (defined over the toroidal phase space $\mathbb{T}^{4}$)
with the $t=0$ hyperplane. 

\begin{figure}
\includegraphics[width=0.25\textwidth]{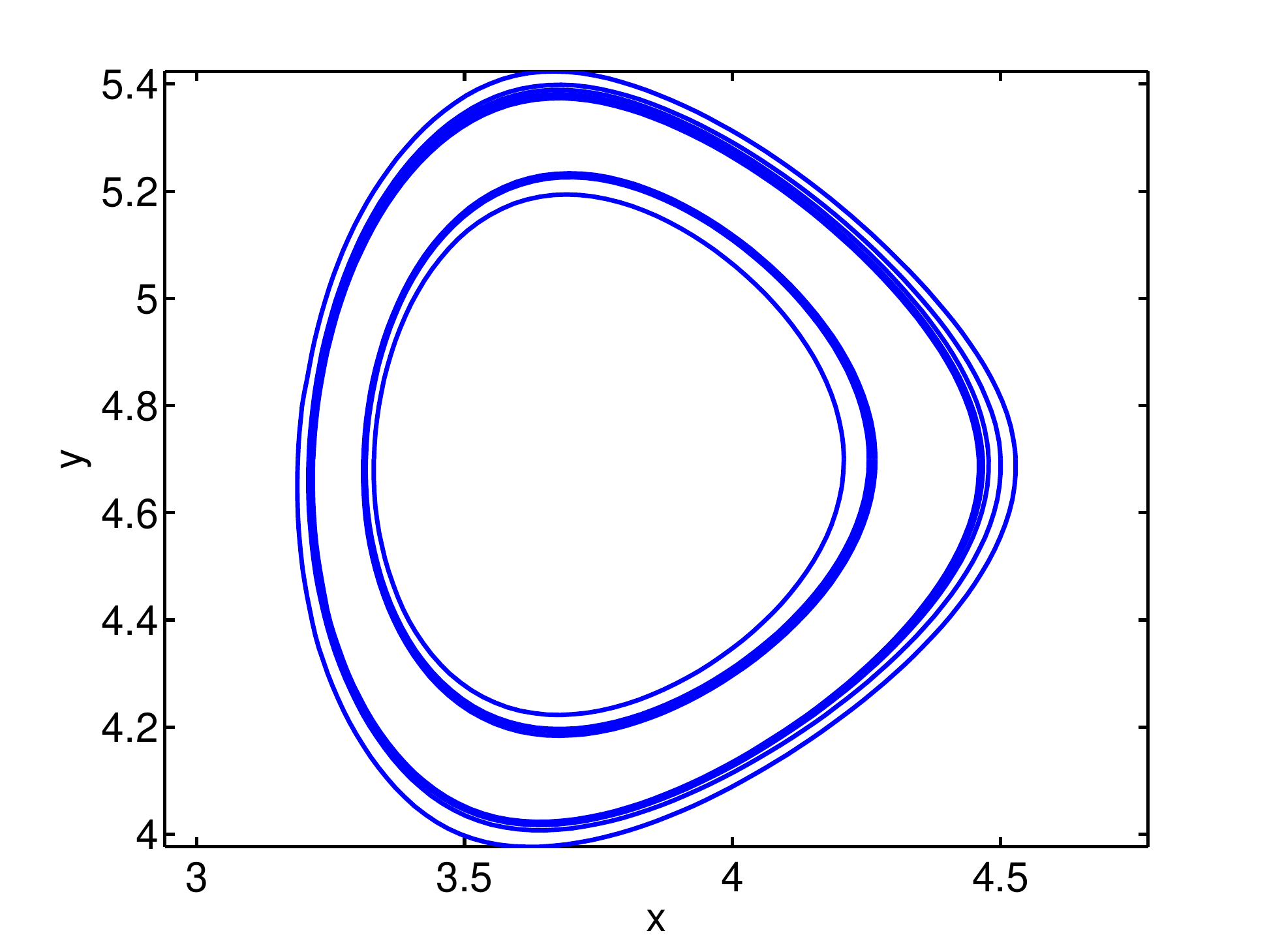}\includegraphics[width=0.75\textwidth]{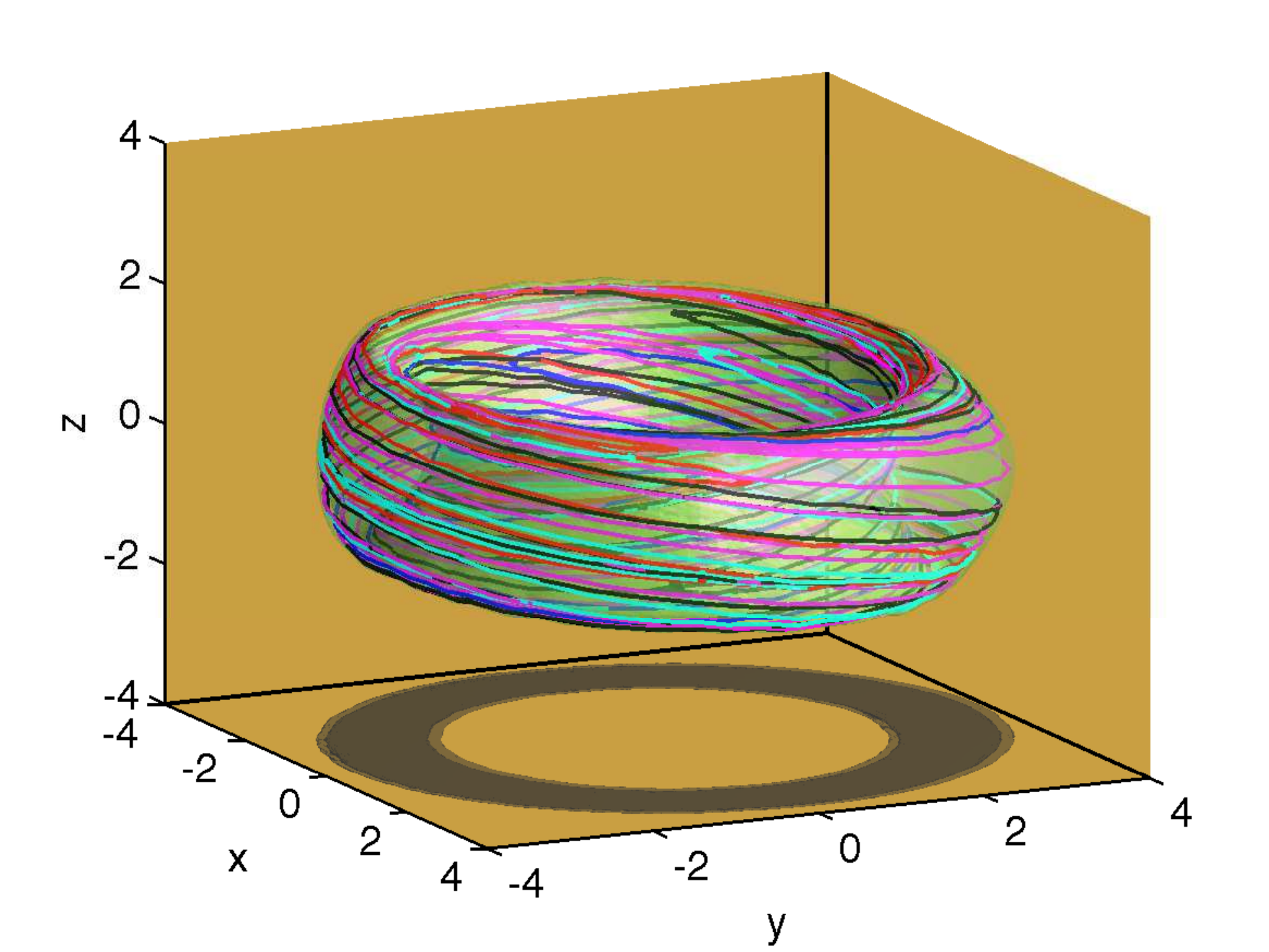}
\caption{Left: Closed reduced shearlines for the time-periodic ABC flow on
the plane $z=0$, constructed from flow data over the time interval
$[t_{0},t_{0}+T]=[0,30\pi]$. Right panel: invariant torus as elliptic
LCS for the temporal Poincare map, obtained from subsequent images
of the outermost closed shearline under iterations of $F_{0}^{2\pi}$.
(The blue, red, cyan, black, and magenta curves represent the $n$th
iterate of $F_{0}^{2\pi}$ for $n=50,75,110,160,200$, respectively).
To illustrate the invariance of the underlying torus, we also computed
400 iterates of the outermost closed shearline under $F_{0}^{2\pi}$,
obtaining the green, filamentation-free toroidal surface.}
\label{fig:shear_periodic} 
\end{figure}

We now illustrate the barrier property of the three-dimensional torus
represented by the elliptic LCS of Fig. \ref{fig:shear_periodic}.
To this end, we advect two initial conditions from the interior of
the two-dimensional torus starting from the $z=0$ reference plane,
and two other initial conditions from the exterior of this torus within
the same plane. These four initial conditions are placed on the grey
circle shown in the left panel of Fig. \ref{fig:shear_periodic}.
The center of this circle is on the outermost torus barrier, and is
advected as a blue trajectory. As seen in the right panel of Fig.
\ref{fig:shear_periodic}, the blue trajectory indeed remains confined
to a quasi-periodically deforming transport barrier in phase space.
This barrier keeps both the red and the yellow initial conditions
from spreading. In contrast, the green and black initial conditions
launched from outside the outermost torus barrier develop large excursions
over time.

\begin{figure}[h!]
\includegraphics[width=0.25\textwidth]{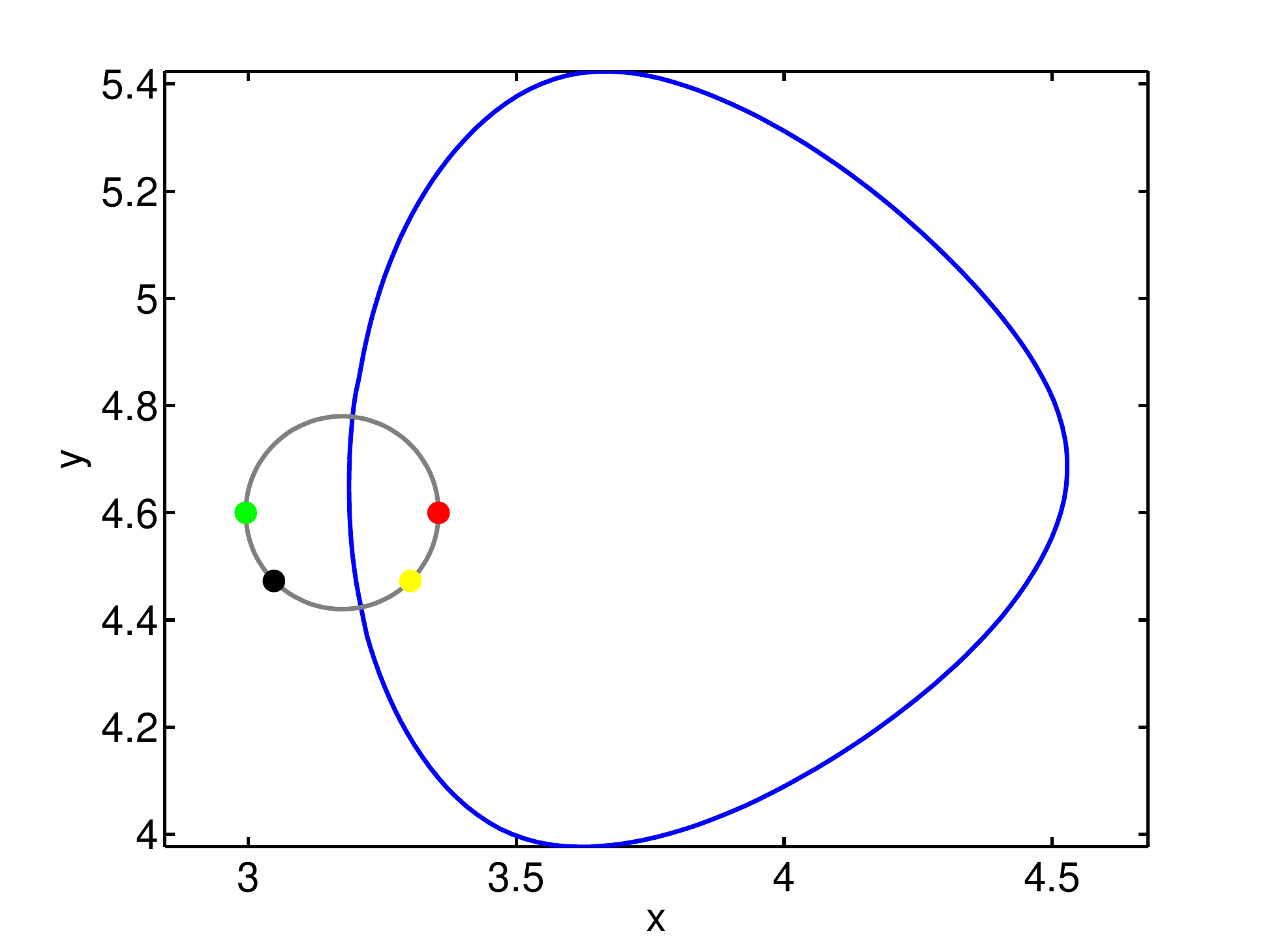}\includegraphics[width=0.75\textwidth]{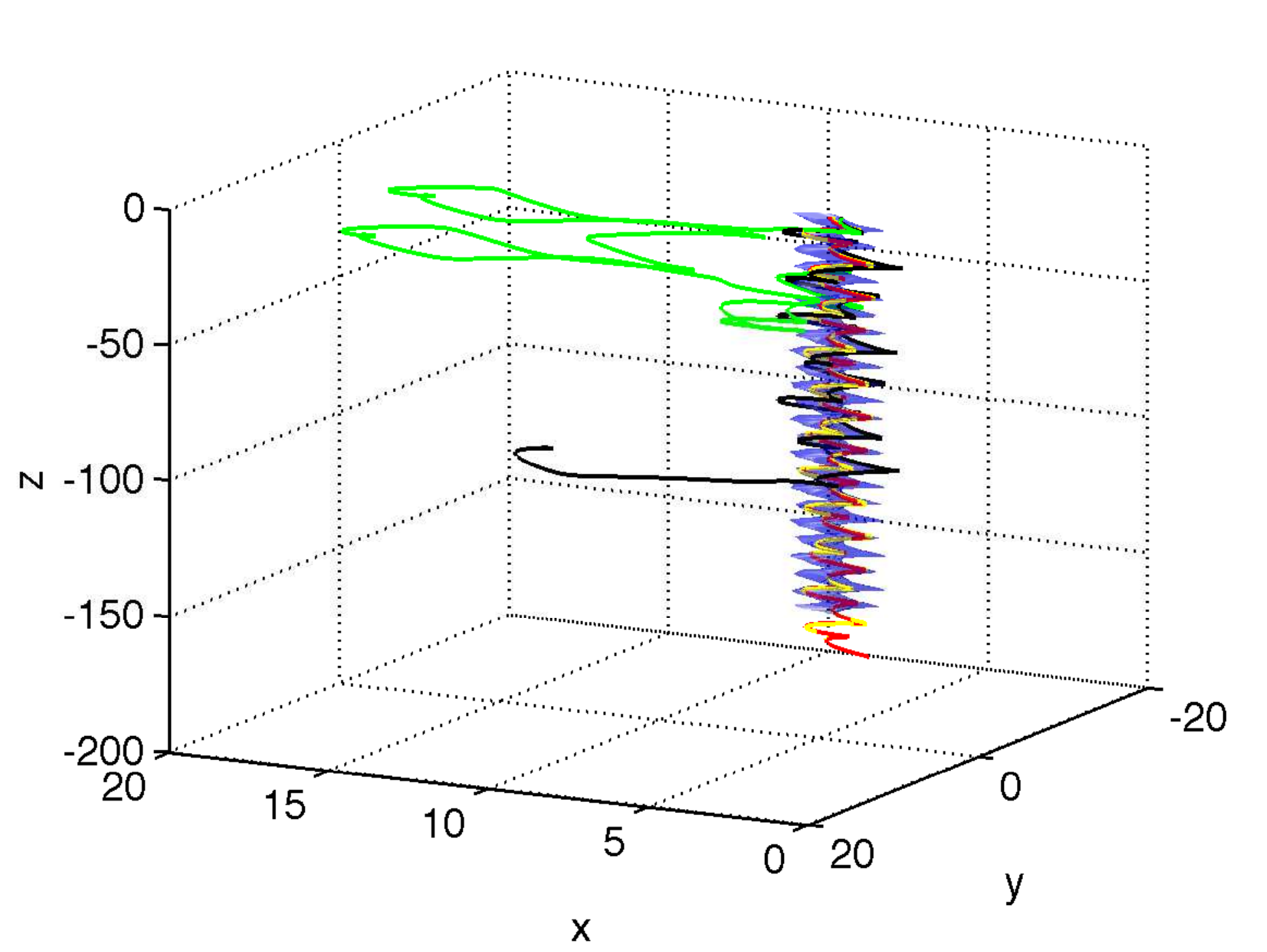}
\caption{Verification of the quasiperiodic transport barrier obtained from
reduced shearlines. Tracers launched inside (red and yellow), along
(blue), and outside (green and black) the outermost closed reduced
shearline show markedly different behavior in phase space (blue).
The time interval of advection was $[0,30\pi]$.}
\label{fig:shear_tracers_per} 
\end{figure}

\subsubsection{Repelling hyperbolic LCSs in the time-periodic ABC flow\label{sub:Repelling-hyperbolic-barriers}}

We now compute repelling hyperbolic barriers for the time-periodic
ABC flow using the slicing method described in steps H1-H5 of section
\ref{sub:Algorithm-for-hyperbolic}. We select the discrete family
of 21 planes
\[
\Pi(s_{1})=\left\{ (x,y,z)\in[0,2\pi]^{3}:\,\, z=s_{1}\right\} ,\qquad s_{1}=0.00,\,0.005,\,0.01,\ldots,\,0.1,
\]
with a $500\times500$ uniform grid $\mathcal{G}_{0}$ within each
such plane. Over the time interval ranging from $t_{0}=0$ to $t_{0}+T=4.0$,
we carried out the procedure outlined in steps H1-H5 of section \ref{section:comp}.
In step H2, the grid $\mathcal{G}_{1}$ was chosen as $600\times10$
in $(x,y)$, and the helicity parameter was chosen as $\epsilon_{0}=10^{-4}$.
The filtered reduced strainlines obtained from H1-H4 on the $\Pi(0)$
plane are shown in the left panel of Fig. \ref{fig:strain_per}. The
right panel of the same figure shows the reconstructed barrier surface
by performing step H5 across the plane family $\Pi(s_{1})$ and interpolating
smooth surfaces over the resulting reduced strainline segments. 

\begin{figure}
\includegraphics[width=0.2\textwidth]{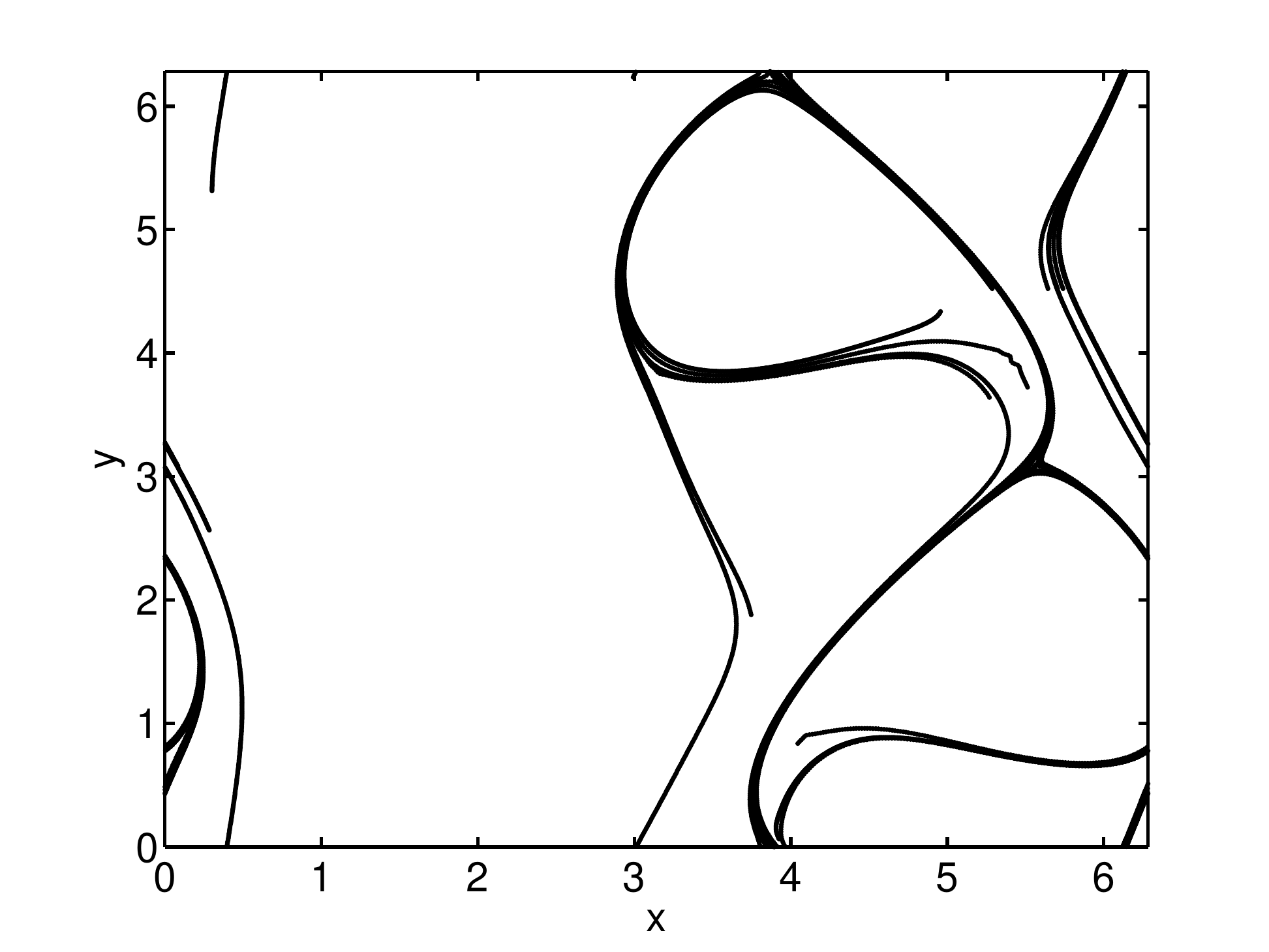}\includegraphics[width=0.8\textwidth]{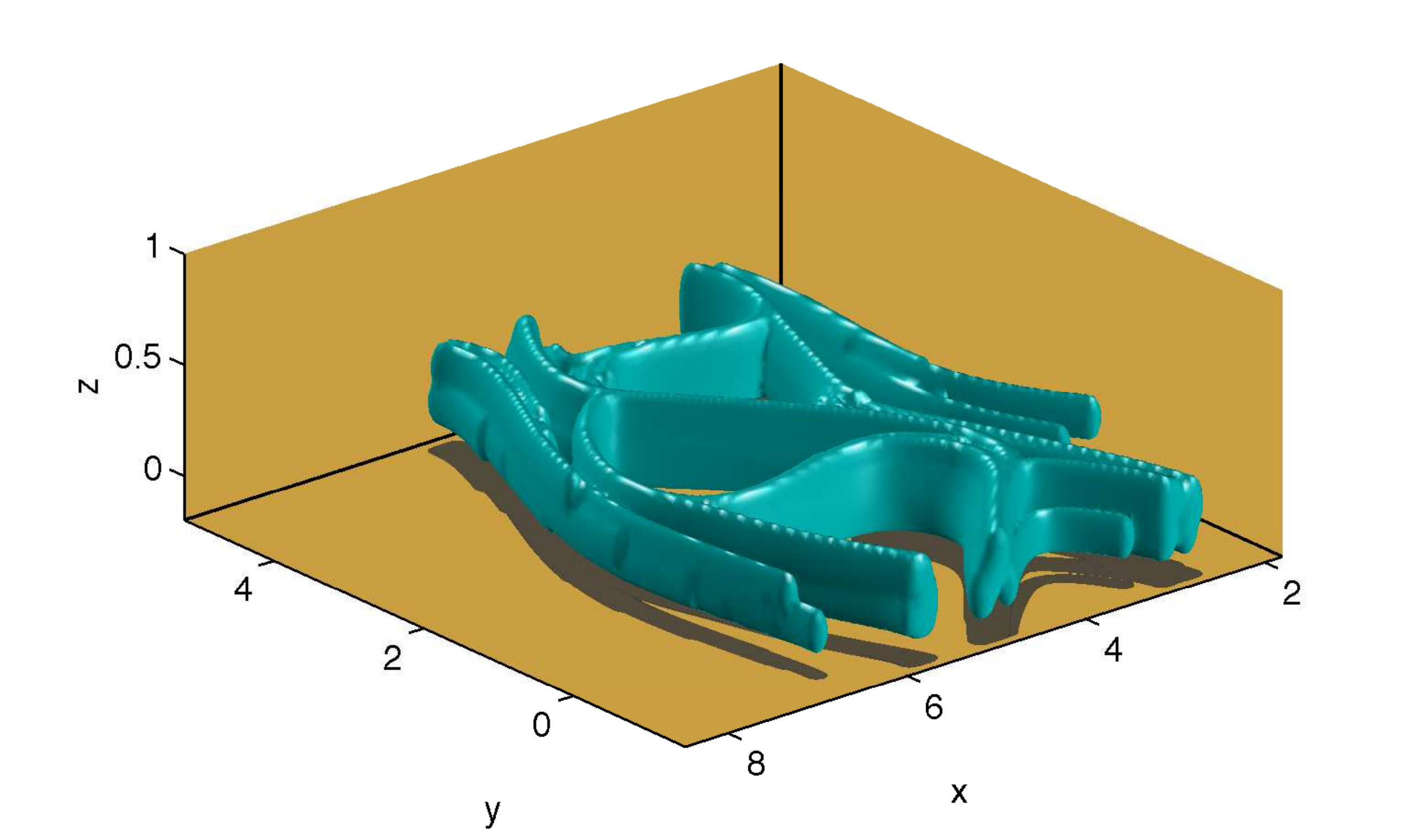}
\caption{Left: Reduced strainlines on the $\Pi(0)$ plane for the temporally
periodic ABC flow for the time interval $[0.0,4.0]$. Right: repelling
hyperbolic LCSs obtained by computing reduced strainlines over the
plane family $\Pi(s_{1})$ .}
\label{fig:strain_per} 
\end{figure}

\subsection{Chaotically forced ABC flow}

\label{Section:ABC_aperiodic}Here we consider a temporally aperiodic
version of the ABC flow, given by the equations

\begin{equation}
\begin{split} & \dot{x}=\left(A+F(t)\right)\sin z+C\cos y,\\
 & \dot{y}=B\sin x+A\left(A+F(t)\right)\cos z,\\
 & \dot{z}=C\sin y+B\cos x,
\end{split}
\label{ABC_aperiodic_equations}
\end{equation}
with $F(t)$ representing a chaotic signal. The signal is generated
by a trajectory close to the strange attractor of a periodically forced
and damped Duffing oscillator (see Fig. \ref{chaotic_signal}). The
temporally aperiodic flow \eqref{ABC_aperiodic_equations} admits
neither a well-defined spatial nor a well-defined temporal autonomous
first return map. Therefore, the simplified barrier visualization
methods used for elliptic barriers in the steady (Fig. \ref{fig:shear_steady_tori})
and time-periodic (Fig. \ref{fig:shear_periodic}) ABC flows are no
longer applicable.

\begin{figure}[h!]
\centering \includegraphics[scale=0.3]{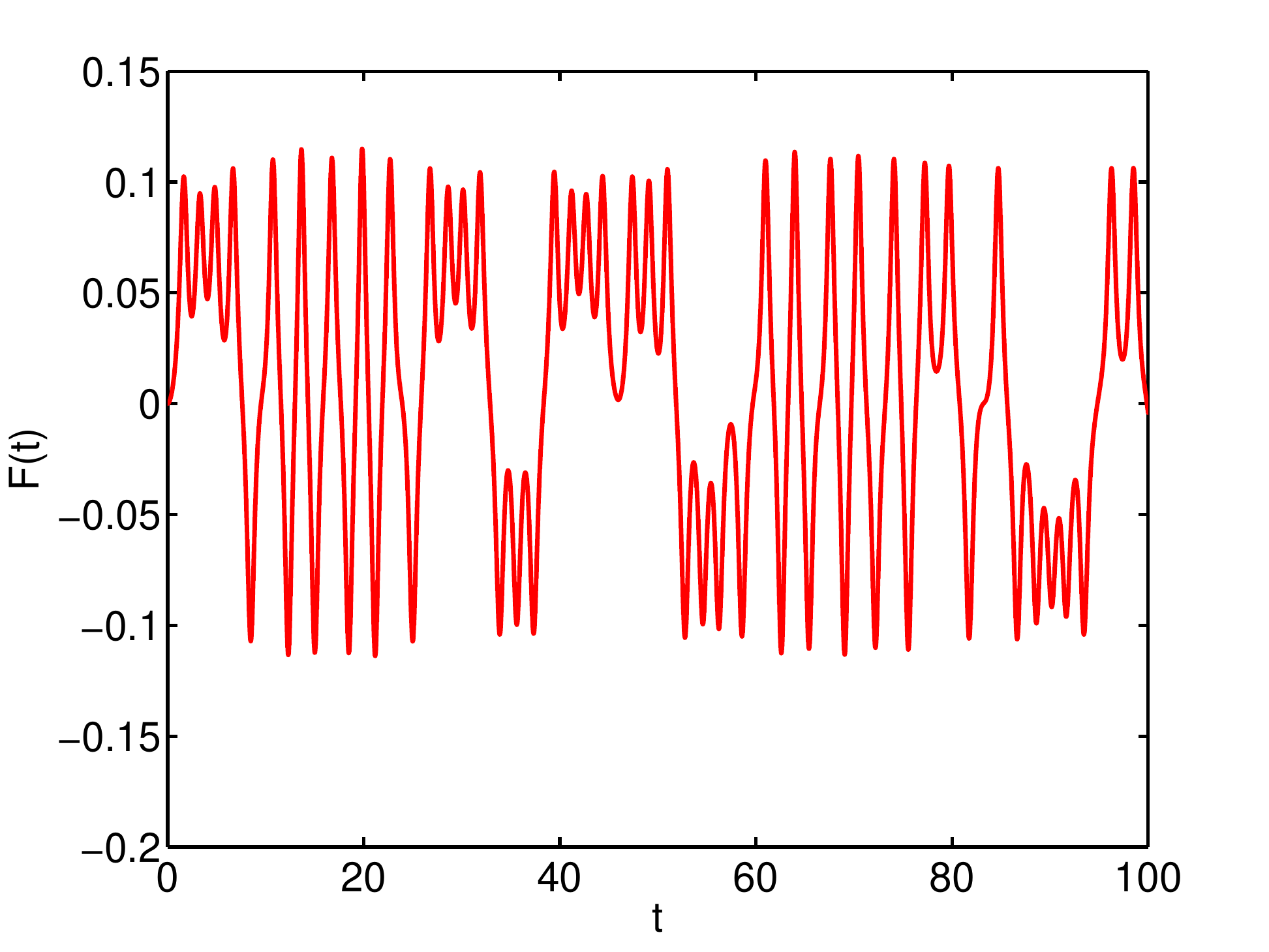} \caption{Aperiodic forcing used in the temporally aperiodic ABC-type flow \ref{ABC_aperiodic_equations}.}
\label{chaotic_signal}
\end{figure}

\subsubsection{Elliptic LCSs in the chaotically forced ABC flow}

We compute the Cauchy-Green strain tensor $C_{0}^{100}$ over a $500\times500$
grid in each member of the plane family 
\begin{equation}
\Pi(s_{1})=\left\{ (x,y,z)\in[0,2\pi]^{3}:\,\, z=s_{1}\right\} ,\qquad s_{1}=2k\pi/150,\qquad k=0,1,2,...,149.\label{eq:chaotic-elliptic-planes}
\end{equation}
The forthcoming computations were carried out in a parallelized fashion
over the $150$ $s_{1}$-slices defined in  The closed reduced strainlines
obtained form SH1-SH4 on the $\Pi(0)$ plane are shown in the upper
left panel of Fig. \ref{fig:shear_aperiodic}. The tolerance parameter
in the computational step SH3 is chosen to be $\epsilon_{0}=10^{-2}$.
The upper right panel of the same figure shows the reconstructed outermost
elliptic LCS by performing step SH5 across the plane family $\Pi(s_{1})$
and interpolating smooth surfaces over the resulting closed shearline
segments. The lower left panel of the figure confirms the coherence
of the detected barrier up to time $100$. The lower right panel of
the figure shows that the extracted barrier remains coherent under
advection even at time $150$. 

\begin{figure}[h!]
\begin{centering}
\includegraphics[width=0.3\textwidth]{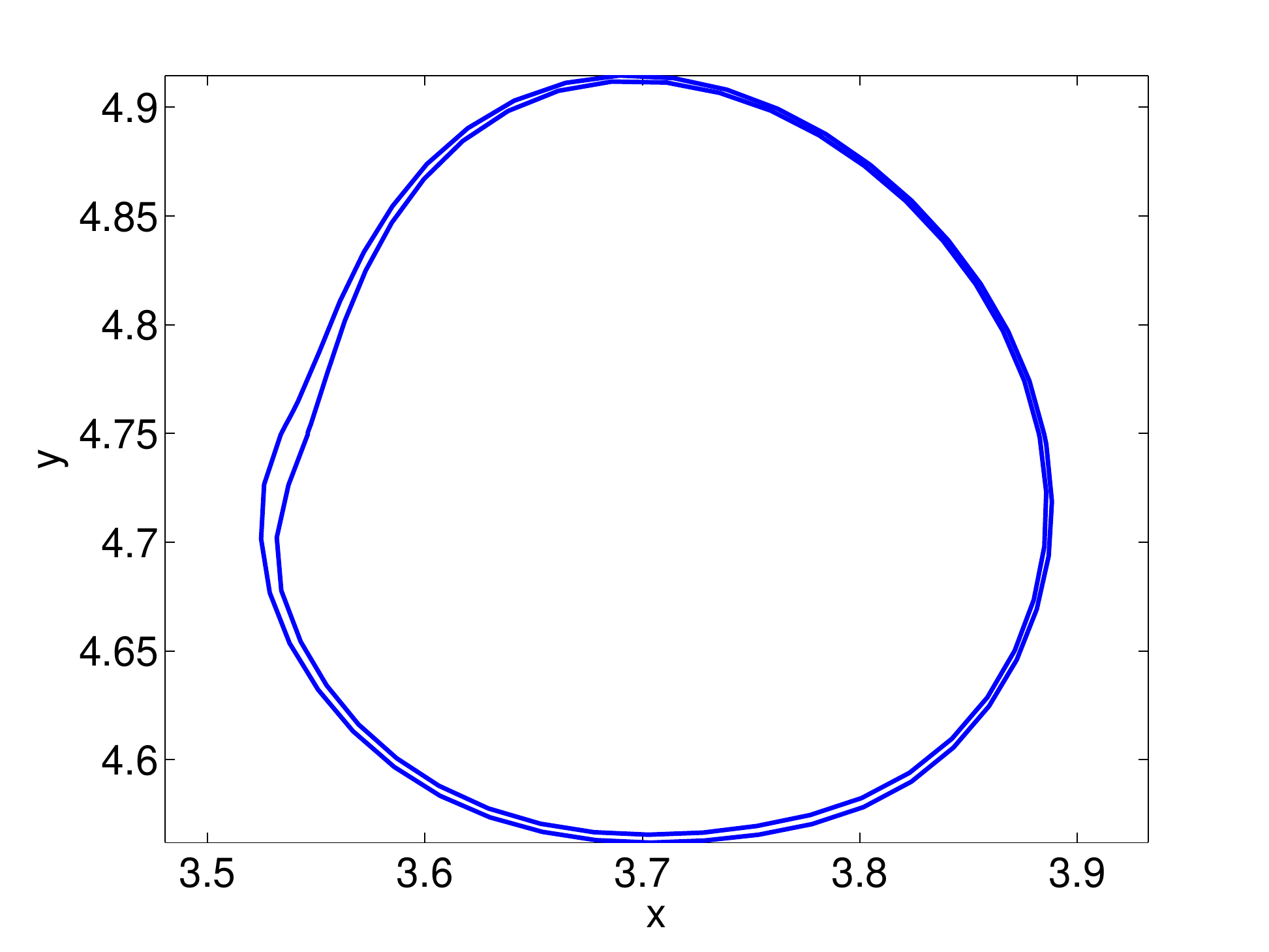}\includegraphics[width=0.7\textwidth]{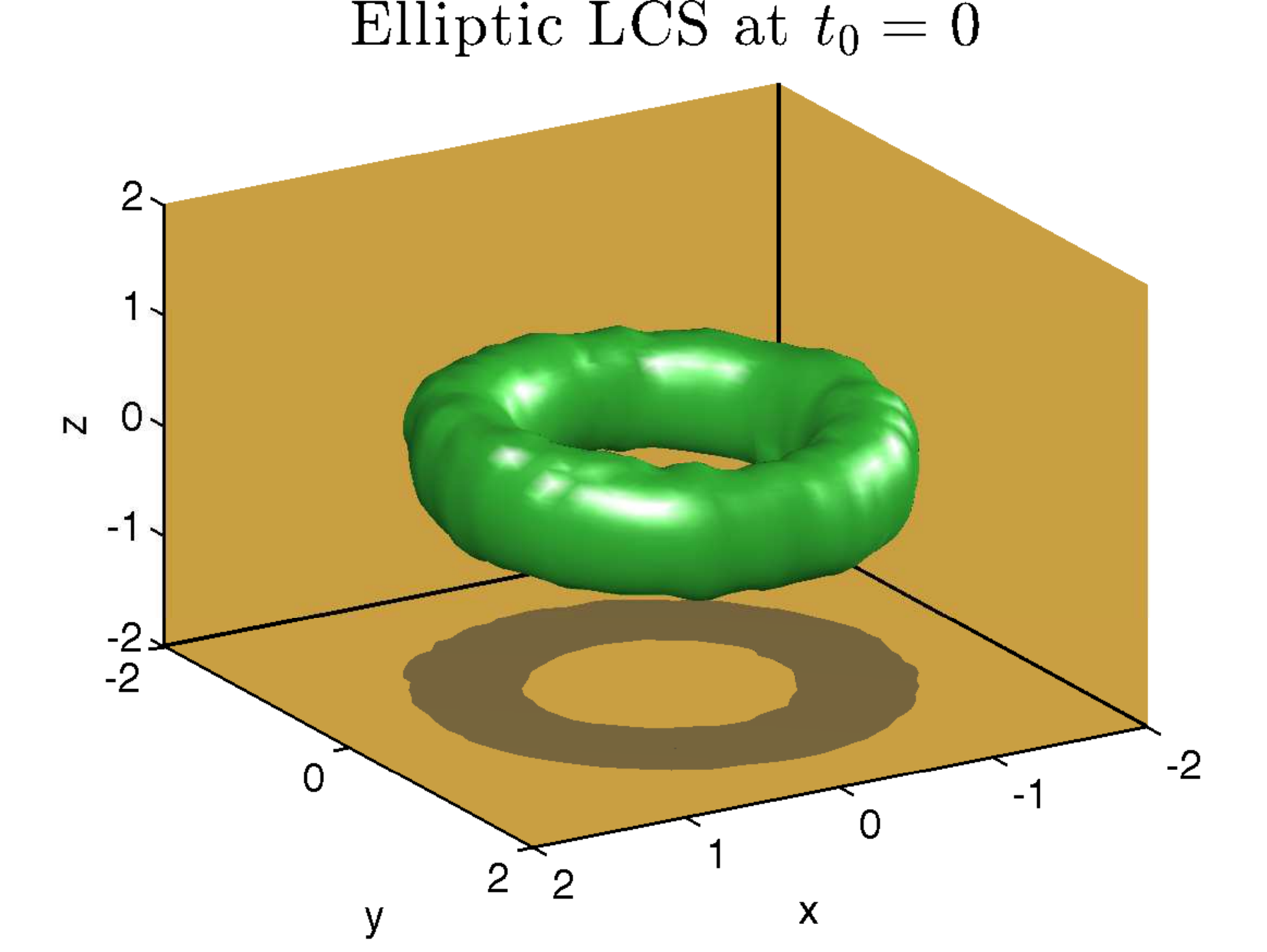} 
\par\end{centering}

\centering{}\includegraphics[width=0.55\textwidth]{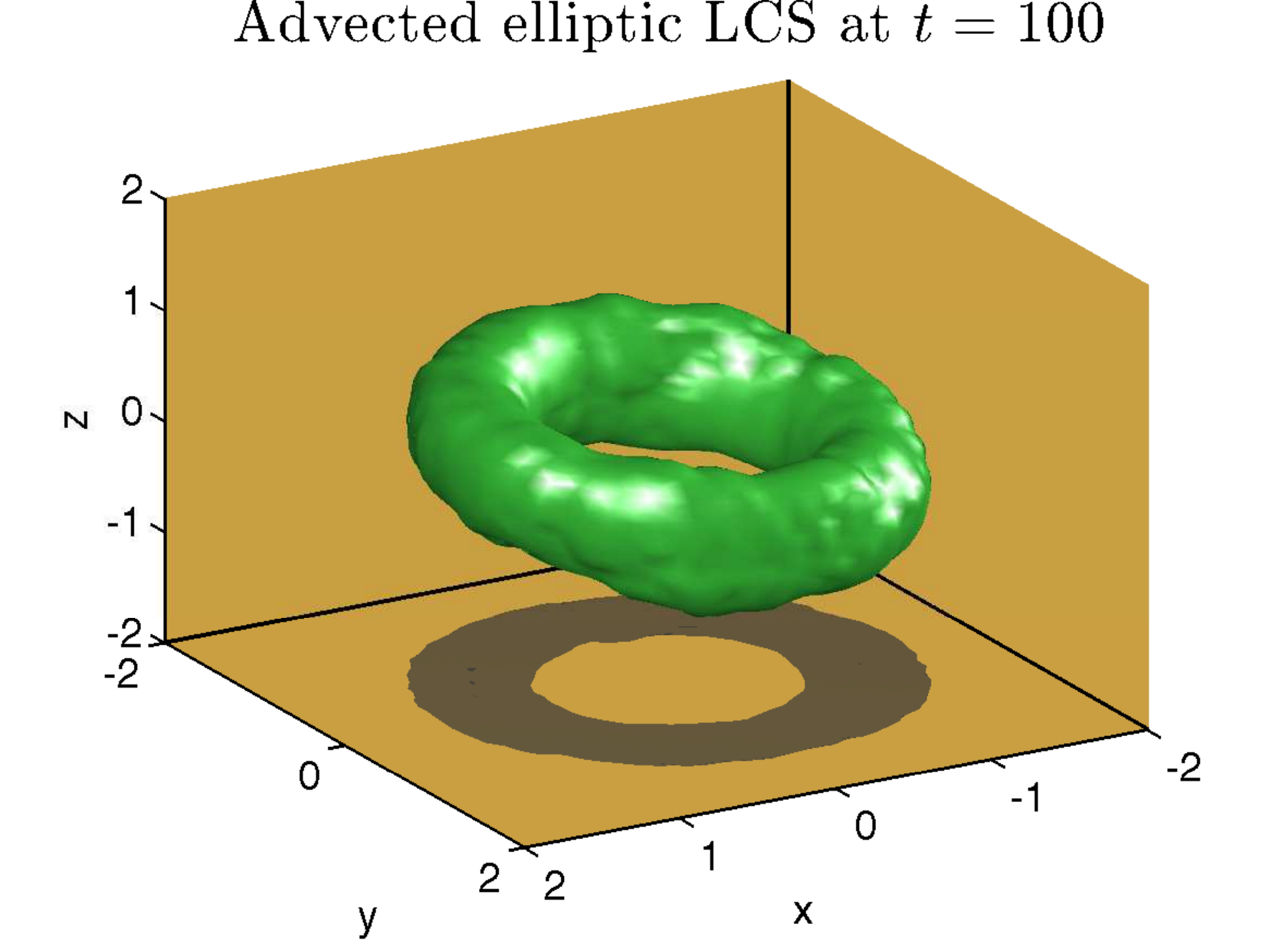}\includegraphics[width=0.55\textwidth]{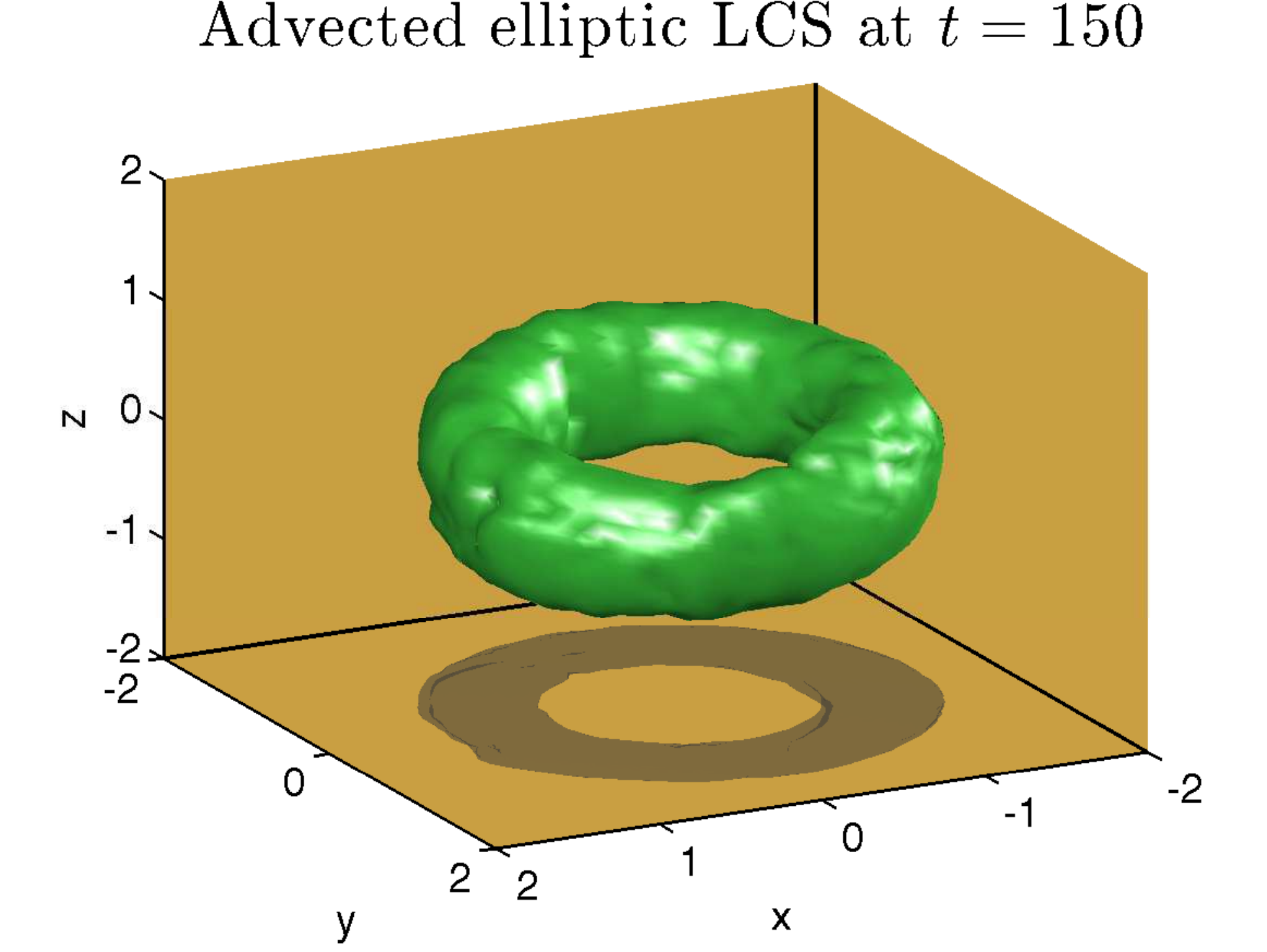}
\caption{Elliptic LCS in the chaotically forced ABC flow. \emph{Upper left:}
Reduced closed shearlines in the $\Pi(0)$ (i.e., $z=0$ ) plane computed
from $C_{0}^{100}$. \emph{Upper right:} Outermost elliptic LCS at
time $t_{0}=0$, visualized through the torus embedding \eqref{eq:trans2}.
\emph{Lower left: }Advected elliptic LCS at time $t_{0}+T=100$, the
final time used in its construction. \emph{Lower right:} Advected
elliptic LCS at time $t_{0}+T=150$ which is larger 50\% larger than
the final time used in its construction.\textbf{ }}
\label{fig:shear_aperiodic} 
\end{figure}

The time interval used in verifying sustained coherence for the elliptic
LCS in the lower right panel of Fig. \ref{fig:shear_aperiodic} is
50\% longer than the time interval used to extract this barrier. This
sustained coherence property is remarkable, as illustrated by Fig.
\ref{fig:ring_torus_aper}. In this figure, a circle of one million
initial conditions is selected as a perturbation to the smaller diameter
of the torus barrier. Just after an advection time of $t=13.0$, the
ring quickly loses all its coherence, stretching and folding by a
large amount in a visibly chaotic fashion. 

\begin{figure}
\includegraphics[width=0.35\textwidth]{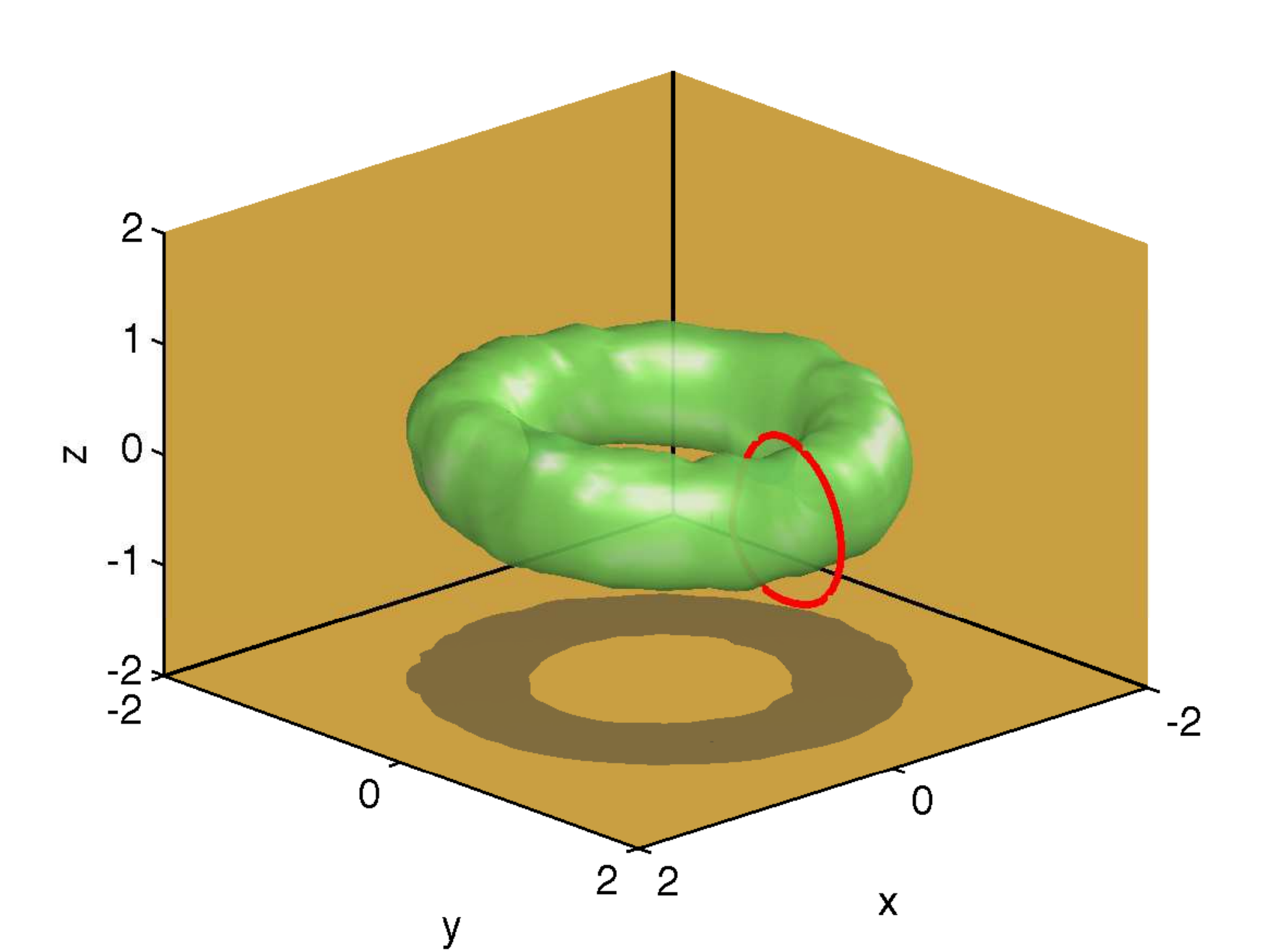}\includegraphics[width=0.65\textwidth]{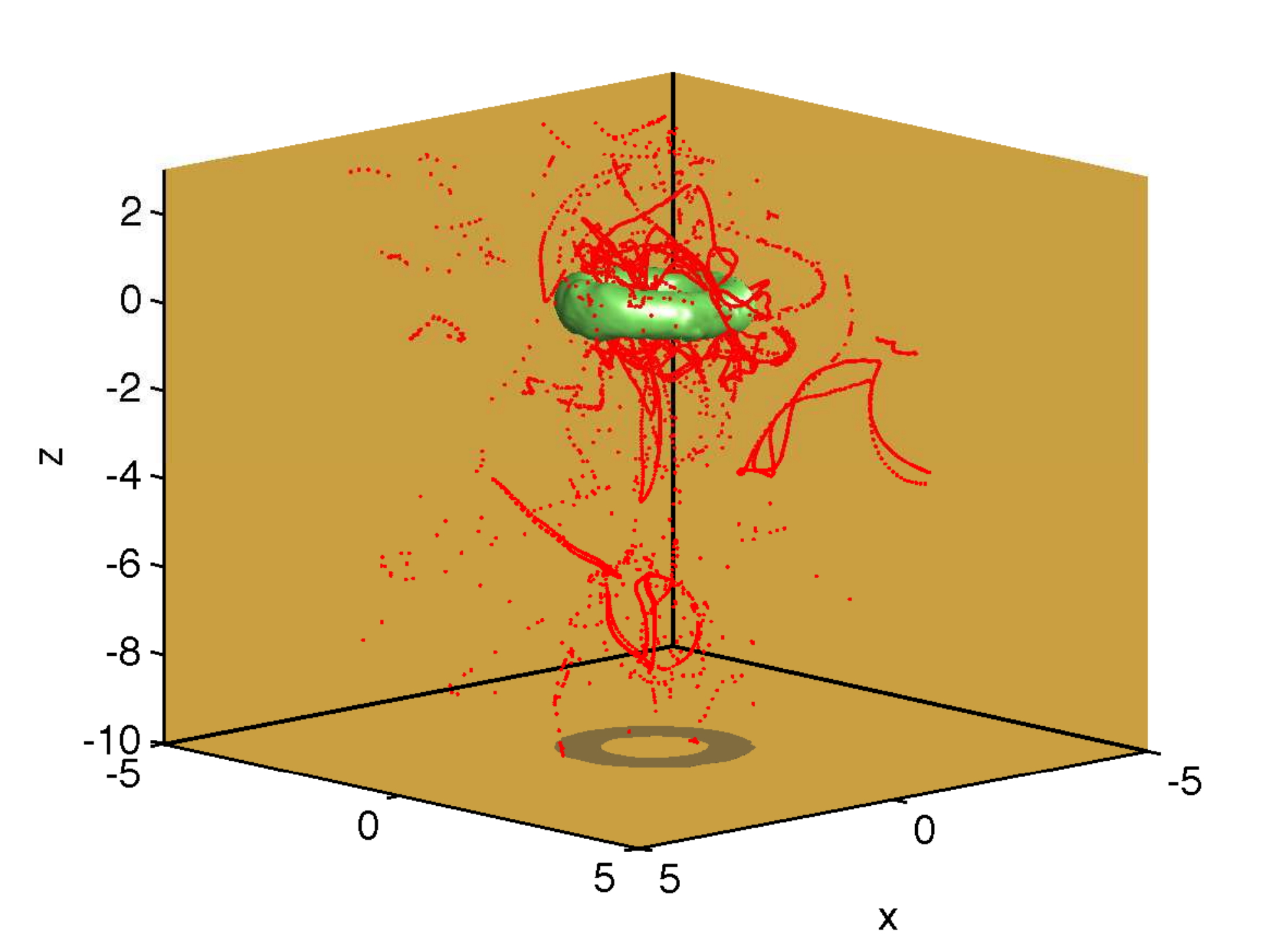}
\caption{Evolution of an elliptic LCS and of a ring placed near the LCS at
time $t_{0}=0.0$ (left) into their final position at time $t=13.0$
(right).}
\label{fig:ring_torus_aper} 
\end{figure}

Fig. \ref{fig:tracers_aper} shows the same type of verification of
the optimality of the barrier that we employed in Fig. \ref{fig:shear_tracers_per}
for the time-periodic ABC flow. Again, tracers launched inside the
barrier remain confined to the interior of the barrier, while tracers
launched slightly outside the barrier exhibit large excursions. 

\begin{figure}
\centering{}\includegraphics[width=0.3\textwidth]{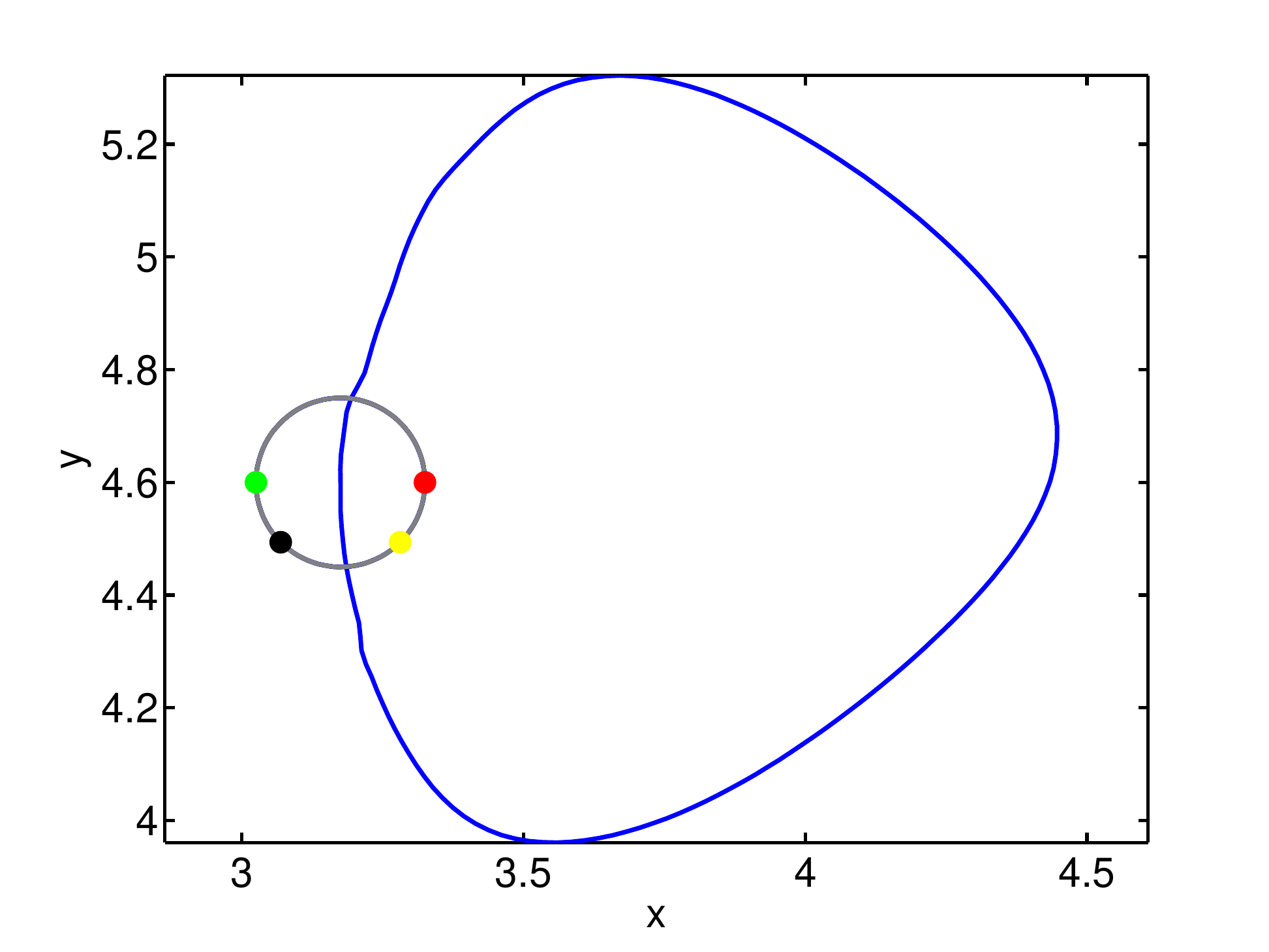}\includegraphics[width=0.7\textwidth]{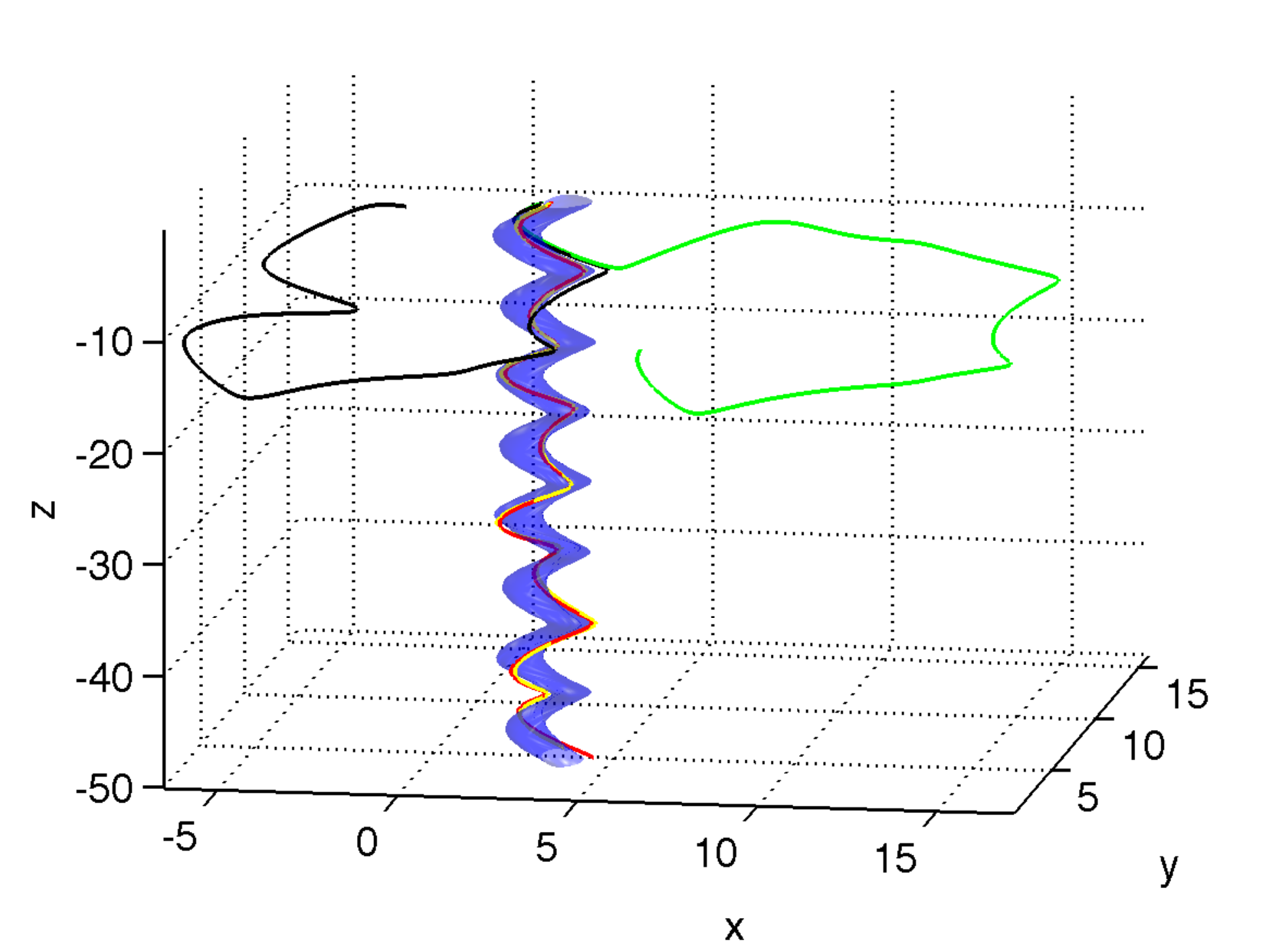}
\caption{Advection of tracers inside (red and yellow) and outside (green and
black) of the outermost closed shearline (blue) under the flow map
$F_{0}^{25}$. }
\label{fig:tracers_aper} 
\end{figure}

\subsubsection{Repelling hyperbolic barriers in the chaotically forced ABC flow}

Finally, we compute repelling hyperbolic barriers for the chaotically
forced ABC flow using steps H1-H5 of section \ref{section:comp}.
The Cauchy--Green strain tensor $C_{0}^{5}$ is computed over the
same plane family used in section \ref{sub:Repelling-hyperbolic-barriers}
for the time-periodic case. The grids $\mathcal{G}_{0}$ and $\mathcal{G}_{1}$,
as well as the admissible upper bound $\epsilon_{0}$ on the helicity
norm, are also selected the same as in section \ref{sub:Repelling-hyperbolic-barriers}.
Fig. \ref{fig:strain_aper} shows the final result, the set of extracted
repelling hyperbolic barriers in the chaotically forced case. 

\begin{figure}
\centering{}\includegraphics[width=0.3\textwidth]{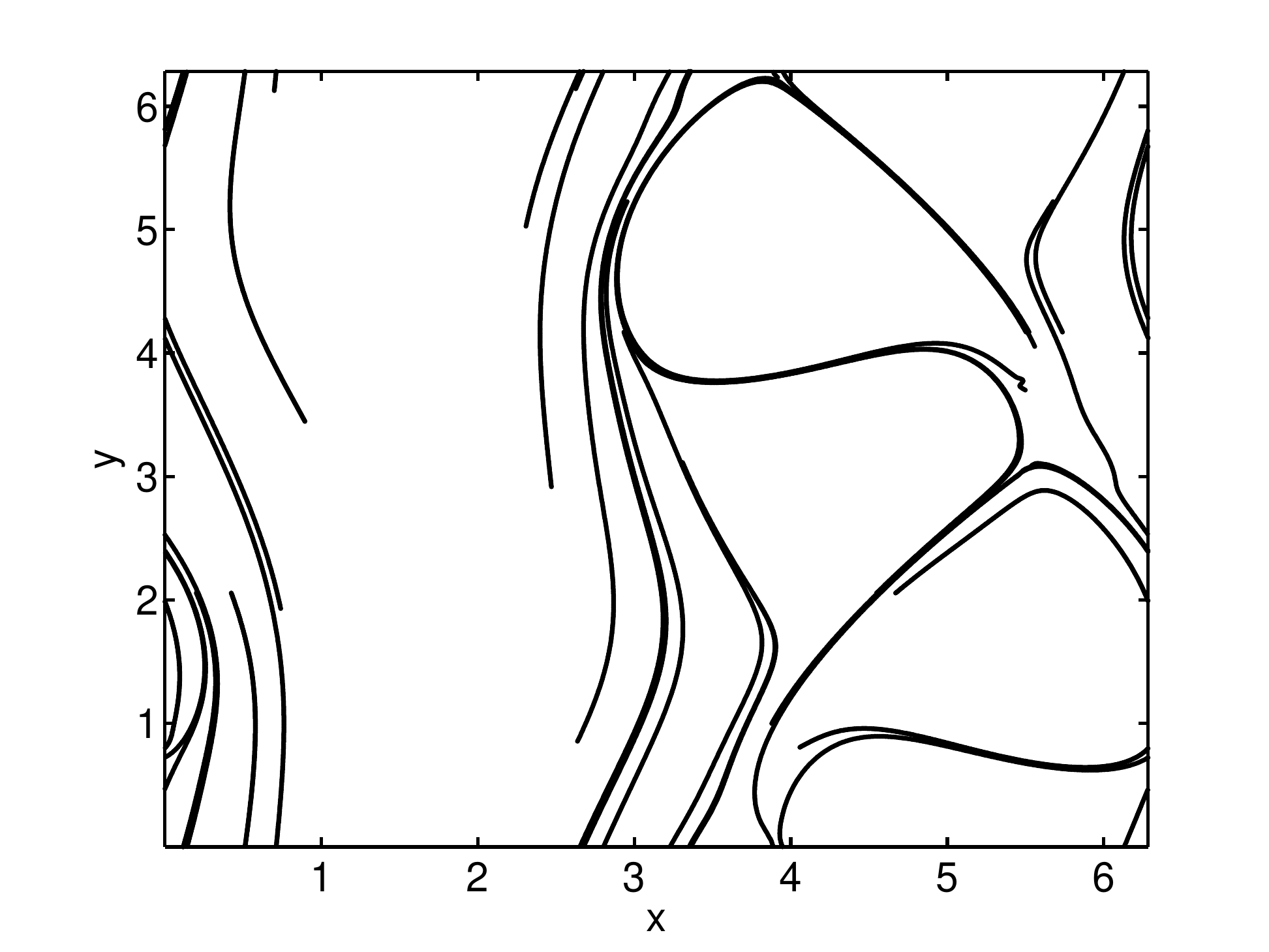}\includegraphics[width=0.7\textwidth]{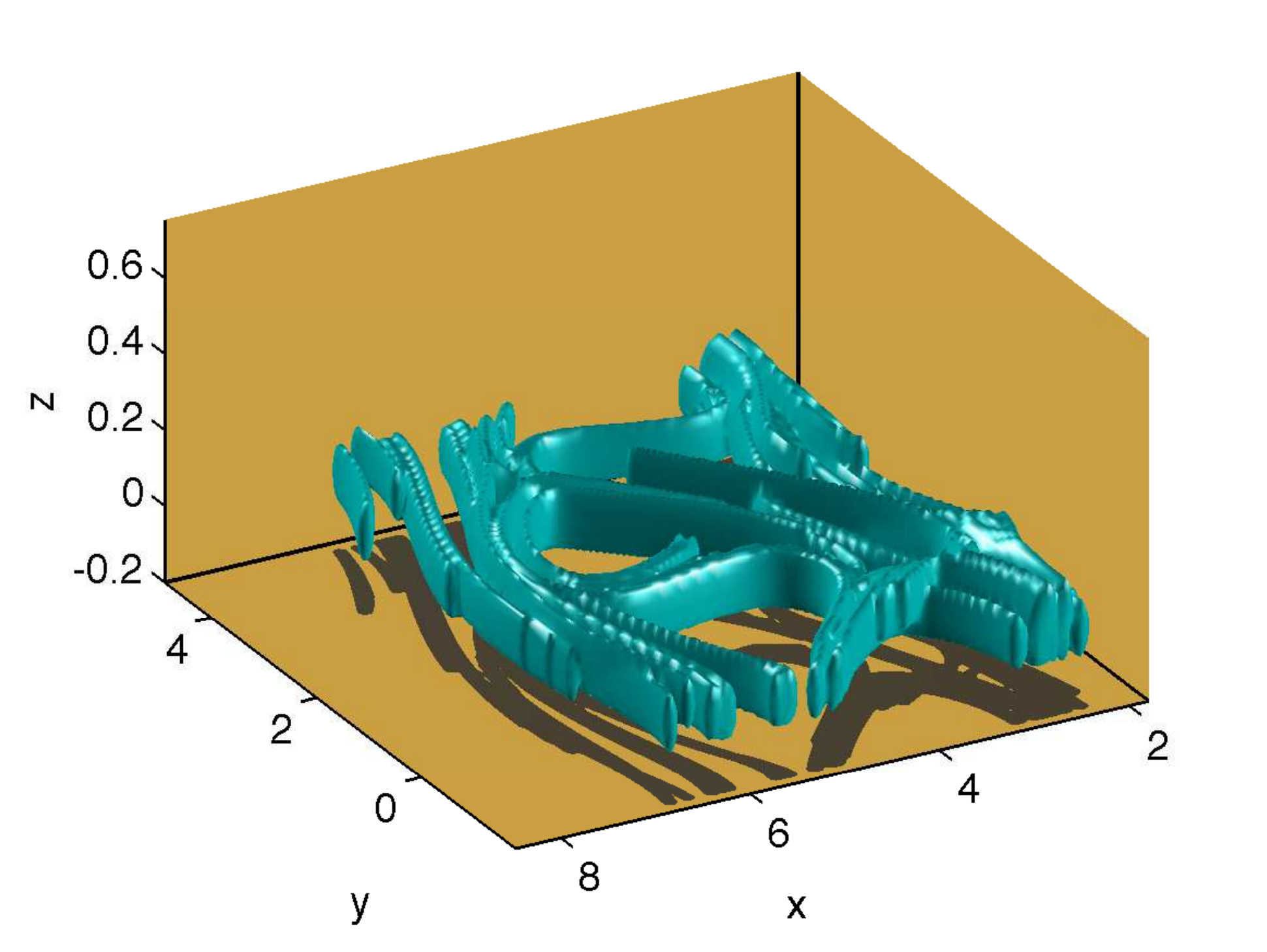}
\caption{Reduced strainlines on the $z=0$ planes (left) and repelling hyperbolic
LCSs at time $t_{0}=0$ (right) for the chaotically forced ABC flow,
reconstructed from the flow map $F_{0}^{5}$. }
\label{fig:strain_aper} 
\end{figure}

\section{Conclusions}

We have developed a unified theory of transport barriers for three-dimensional
unsteady flows. The barriers are attracting, repelling or shear LCS,
which are constructed to prevail as skeletons of material deformation
over a given finite time interval of observation. Out of general shear
LCSs, elliptic LCSs are distinguished by their tubular or toroidal
topology. Our approach renders all these LCSs as explicitly parametrized
surfaces with precisely understood impact on neighboring material
blobs. This is to be contrasted with alternative approaches that seek
the domains separated by transport barriers from various indicator
functions, without yielding specific dynamical information or a parametrization
for the barrier itself. 

Our approach closely reproduces known transport barriers in the steady
ABC flow, and provides similar results for time-periodic and time-aperiodic
version of the same flow. Remarkably, generalized KAM tori (Lagrangian
vortex rings) continue to exist in the general aperiodic case, providing
sharp boundaries for coherent toroidal islands in an otherwise chaotic
flow.

Although similar notions of multi-dimensional hyperbolic LCSs have
been used earlier \citep{var_theory}, the theory of elliptic LCS
as well as the computational methodology developed here for all types
of LCSs is new. Our notion of an ellkiptic LCS extends that arising
in the two-dimensional theory of shear barriers developed in \citep{geo_theory}.
This two-dimensional theory has identified highly coherent Lagrangian
eddies from satellite-mapped surface velocities in the Agulhas leakage
of the Southern Ocean \citep{Agulhas}. The direct analogy with the
two-dimensional theory promises similar results in the application
of the present techniques to three-dimensional numerical or experimental
flow data. 

\appendix

\section{Shear LCSs in unsteady, three-dimensional parallel shear flows}

\label{App:par_shear_flow}Consider the flow

\begin{equation}
\begin{split} & \dot{{x}}(t)=u(z,t),\\
 & \dot{{y}}(t)=v(z,t),\\
 & \dot{{z}}(t)=w(t),
\end{split}
\label{3D_parallel_gen}
\end{equation}
where the dependence of $u$, $v$, and $w$ on their arguments is
smooth but otherwise arbitrary. Trajectories of \eqref{3D_parallel_gen}
satisfy 
\begin{equation}
\begin{split} & x(t)=x_{0}+\int_{t_{0}}^{t_{0}+T}u(z(\tau),\tau)d\tau,\\
 & y(t)=y_{0}+\int_{t_{0}}^{t_{0}+T}v(z(\tau),\tau)d\tau,\\
 & z(t)=z_{0}+\int_{t_{0}}^{t_{0}+T}w(\tau)d\tau.
\end{split}
\label{3D_parallel_traj}
\end{equation}
We introduce the functions $a(z_{0},t_{0},T)$ and $b(z_{0},t_{0},T)$
as 
\begin{equation}
\begin{split} & a(z_{0},t_{0},T)=\int_{t_{0}}^{t_{0}+T}u_{z}(z(\tau),\tau)d\tau,\\
 & b(z_{0},t_{0},T)=\int_{t_{0}}^{t_{0}+T}u_{z}(z(\tau),\tau)d\tau.
\end{split}
\end{equation}

Suppressing the arguments of $a$ and $b$, we obtain the Cauchy-Green
strain tensor in the form 
\begin{equation}
C_{t_{0}}^{t_{0}+T}=\left(\begin{array}{ccc}
1 & 0 & a\\
0 & 1 & b\\
a & b & a^{2}+b^{2}+1
\end{array}\right).\label{CG_3D_parallel_general}
\end{equation}
We now show that the planes $z=k=const.$ are shear LCSs in the sense
of Definition 1, as obtained from an application of Theorem 1. To
do this, we use an expression for the angle $\phi$ that the vector
$v_{\pm}:=\xi_{2}\times n_{\pm}$ encloses with the vertical planes
$z=k$ for general $3$D flows.
\begin{lem}
\label{lem:angle_shear} Consider a general three-dimensional unsteady
flow, and letM
\[
v_{\pm}=\xi_{2}\times n_{\pm}=(\sin\phi\cos\theta,\sin\phi\sin\theta,\cos\phi).
\]
 Also, let $C_{ij}$ denote the $(i,j$)-th entry of the Cauchy-Green
strain tensor \textup{$C_{t_{0}}^{t_{0}+T}$}. We then have 
\begin{equation}
\begin{split} & C_{11}\sin^{2}\phi\cos^{2}\theta+C_{22}\sin^{2}\phi\sin^{2}\theta+C_{33}\cos^{2}\phi\\
 & +2\left(C_{12}\sin^{2}\phi\sin\theta\cos\theta+C_{13}\sin\phi\cos\phi\cos\theta+C_{23}\sin\phi\cos\phi\sin\theta\right)=\sqrt{\lambda_{1}\lambda_{3}}.
\end{split}
\label{eqn:angle_shear}
\end{equation}
\end{lem}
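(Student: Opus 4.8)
The plan is to express the scalar $\langle v_\pm, C_{t_0}^{t_0+T} v_\pm\rangle$ in two ways: first componentwise in the Euclidean frame, which immediately yields the left-hand side of \eqref{eqn:angle_shear}, and second in the eigenbasis $\{\xi_1,\xi_2,\xi_3\}$ of $C_{t_0}^{t_0+T}$, which I will show equals $\sqrt{\lambda_1\lambda_3}$. Since $v_\pm=\xi_2\times n_\pm$ is a unit vector with components $(\sin\phi\cos\theta,\sin\phi\sin\theta,\cos\phi)$ in the standard basis, the bilinear form $\langle v_\pm, C_{t_0}^{t_0+T} v_\pm\rangle$ expands directly as $\sum_{i,j} C_{ij}(v_\pm)_i (v_\pm)_j$, which is exactly the displayed left-hand side of \eqref{eqn:angle_shear}. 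So the entire content of the lemma is the identity $\langle v_\pm, C_{t_0}^{t_0+T} v_\pm\rangle = \sqrt{\lambda_1\lambda_3}$.

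To prove that identity, first I would record the explicit form of $n_\pm$ from Theorem 1(iii). Writing $\alpha = \sqrt{\sqrt{\lambda_1}/(\sqrt{\lambda_1}+\sqrt{\lambda_3})}$ and $\beta = \sqrt{\sqrt{\lambda_3}/(\sqrt{\lambda_1}+\sqrt{\lambda_3})}$, we have $n_\pm = \alpha\,\xi_1 \pm \beta\,\xi_3$, with $\alpha^2+\beta^2 = 1$ so that $n_\pm$ is a unit vector, and $\alpha^2\sqrt{\lambda_1}=\beta^2\sqrt{\lambda_3}$. Since $\xi_2$ is orthogonal to both $\xi_1$ and $\xi_3$, the cross product $v_\pm = \xi_2\times n_\pm = \alpha\,(\xi_2\times\xi_1) \pm \beta\,(\xi_2\times\xi_3)$. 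Choosing the orientation so that $\{\xi_1,\xi_2,\xi_3\}$ is right-handed, we get $\xi_2\times\xi_1 = -\xi_3$ and $\xi_2\times\xi_3 = -\xi_1$ (or the signs flip with the other orientation — either way the squared coefficients are what matter), hence $v_\pm = \mp\beta\,\xi_1 - \alpha\,\xi_3$ up to an overall orientation sign. Therefore in the eigenbasis $v_\pm$ has components $(\mp\beta, 0, -\alpha)$, and
\[
\langle v_\pm, C_{t_0}^{t_0+T} v_\pm\rangle = \beta^2\lambda_1 + \alpha^2\lambda_3.
\]
Now substitute $\alpha^2 = \sqrt{\lambda_1}/(\sqrt{\lambda_1}+\sqrt{\lambda_3})$ and $\beta^2 = \sqrt{\lambda_3}/(\sqrt{\lambda_1}+\sqrt{\lambda_3})$ to obtain
\[
\beta^2\lambda_1 + \alpha^2\lambda_3 = \frac{\sqrt{\lambda_3}\,\lambda_1 + \sqrt{\lambda_1}\,\lambda_3}{\sqrt{\lambda_1}+\sqrt{\lambda_3}} = \frac{\sqrt{\lambda_1}\sqrt{\lambda_3}\left(\sqrt{\lambda_1}+\sqrt{\lambda_3}\right)}{\sqrt{\lambda_1}+\sqrt{\lambda_3}} = \sqrt{\lambda_1\lambda_3},
\]
which is the claimed right-hand side. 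Combining the two computations of $\langle v_\pm, C_{t_0}^{t_0+T} v_\pm\rangle$ completes the proof.

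The only genuinely delicate point is bookkeeping of orientation and sign conventions: one must fix whether $\{\xi_1,\xi_2,\xi_3\}$ is taken right-handed and be consistent about it, but since every coefficient that survives enters quadratically, the final identity is insensitive to these choices, so this is a matter of care rather than a real obstacle. I would also note in passing that the intermediate fact $\langle v_\pm, C v_\pm\rangle = \sqrt{\lambda_1\lambda_3}$ is precisely the statement that $v_\pm$ is a direction of constant (geometric-mean) stretching — the three-dimensional analogue of the shear-direction property in the two-dimensional theory — which is why this lemma is the natural tool for identifying the planes $z=\text{const.}$ as shear LCSs in the parallel-flow application that follows.
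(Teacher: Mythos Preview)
Your proof is correct and follows essentially the same approach as the paper: both compute $\langle v_\pm, C_{t_0}^{t_0+T} v_\pm\rangle$ in two ways, once componentwise in the standard basis to obtain the left-hand side, and once in the eigenbasis (using $n_\pm=\alpha\xi_1\pm\beta\xi_3$ and hence $v_\pm$ has $\xi_1$-component $\pm\beta$ and $\xi_3$-component $\alpha$ up to sign) to obtain $\alpha^2\lambda_3+\beta^2\lambda_1=\sqrt{\lambda_1\lambda_3}$. Your version is slightly more explicit about the cross-product orientation bookkeeping and the final algebraic simplification, but the argument is identical.
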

\begin{proof}
The two sides of equation \eqref{eqn:angle_shear} represent two different
ways of computing $\left<v_{\pm},C_{t_{0}}^{t_{0}+T}v_{\pm}\right>$.
The left-hand side is computed using the matrix elements of $C_{t_{0}}^{t_{0}+T}$.
To compute the same quantity and arrive at the quantity on the right-hand
side, recall first that $n_{\pm}=\alpha\xi_{1}\pm\beta\xi_{3}$, where
\[
\alpha=\sqrt{\frac{\sqrt{\lambda_{1}}}{\sqrt{\lambda_{1}}+\sqrt{\lambda_{3}}}},\qquad\beta=\sqrt{\frac{\sqrt{\lambda_{3}}}{\sqrt{\lambda_{1}}+\sqrt{\lambda_{3}}}}.
\]
 Hence $v_{\pm}=\alpha\xi_{3}\pm\beta\xi_{1}$, implying 
\begin{equation}
\left<v_{\pm},C_{t_{0}}^{t_{0}+T}v_{\pm}\right>=\alpha^{2}\lambda_{3}+\beta^{2}\lambda_{1}=\sqrt{\lambda_{1}\lambda_{3}},
\end{equation}
which proves the lemma. 
\end{proof}
For the unsteady parallel shear flow defined by \eqref{3D_parallel_gen},
one can verify that $\lambda=1.0$ is an eigenvalue of the Cauchy-Green
strain tensor \eqref{CG_3D_parallel_general} with eigenvector $\xi=(-\omega_{2}'(z_{0})/\omega_{1}'(z_{0}),1,0)$.
Moreover, symbolic computations in MATLAB show that the other eigenvalues
of \eqref{CG_3D_parallel_general} are greater than one, or less than
one. More specifically, another eigenvalue of $C_{t_{0}}^{t_{0}+T}$
is $\lambda=((a^{2}+b^{2})(a^{2}+b^{2}+4))(1/2)/2+a^{2}/2+b^{2}/2+1$,
which shows that as long as $a$ and $b$ are both nonzero, $C_{t_{0}}^{t_{0}+T}$
will have an eigenvalue greater than one. By incompressibility, another
eigenvalue is then less than one. Thus $\lambda_{2}=1$, and $\xi_{2}$
is parallel to the plane $z=k$.

Since $\xi_{2}$ is always orthogonal to $n_{\pm}$, to show that
$z=k$ is a shear LCS, it remains to argue that $v_{\pm}$ is also
tangent to $z=k$. Since the flow is incompressible, we conclude that
$\lambda_{1}\lambda_{3}=1$. As a result, the right-hand side of the
angle formula in Lemma \ref{lem:angle_shear} is one. Using our specific
form of the Cauchy-Green strain tensor \eqref{CG_3D_parallel_general},
the angle formula \eqref{eqn:angle_shear} becomes 
\begin{equation}
\cos\phi\left[\left(a^{2}+b^{2}\right)\cos\phi+2\sin\phi\left(a\cos\theta+b\sin\theta\right)\right]=1,
\end{equation}
which has $\phi=0$ as a solution. Therefore, we conclude that both
$\xi_{2}$ and $v_{\pm}=n_{\pm}\times\xi_{2}$ are tangent to the
plane $z_{0}=k$, which is therefore a shear LCS provided that $a(z_{0},t_{0},T)\neq0$
or $b(z_{0},t_{0},T)\neq0$.

\section{Evolution of LCS surface area}

\label{App:surface_area}

We consider how the surface area of an LCS changes under the flow
map. We have the following general result
\begin{lem}
\label{lem:Surf_area}{[}Surface area of a general material surface{]}
Let $\mathcal{M}(t)$ be a material surface, and $p(s_{1},s_{2})$
be a local parameterization of $\mathcal{M}(t_{0})$, where $(s_{1,}s_{2})$
lie in a connected open bounded subset $U\subset\mathbb{\mathbb{R}}^{2}$.
Then the surface area of $F_{t_{0}}^{t_{0}+T}\left(U\right)$ can
be computed as
\begin{equation}
S\left(F_{t_{0}}^{t_{0}+T}\left(U\right)\right)=\int_{U}\left|\det\left(\nabla F_{t_{0}}^{t_{0}+T}\right)\right|\cdot\sqrt{\left|\langle p_{s_{1}}\times p_{s_{2}},\left(C_{t_{0}}^{t_{0}+T}\right)^{-1}p_{s_{1}}\times p_{s_{2}}\rangle\right|}ds_{1}ds_{2},\label{eq:surf_area_material}
\end{equation}
 where $p_{s_{i}}:=\frac{\partial p}{\partial s_{i}}$.\end{lem}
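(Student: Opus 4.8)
The plan is to compute the surface area of the image $F_{t_0}^{t_0+T}(U)$ directly from the definition of surface area as an integral over a parametrization, and then to rewrite the resulting normal-vector norm in terms of the Cauchy--Green tensor via the cofactor (adjugate) identity for $3\times 3$ matrices. Concretely, a parametrization of the image surface is $q(s_1,s_2) := F_{t_0}^{t_0+T}(p(s_1,s_2))$, so by the chain rule $q_{s_i} = \nabla F_{t_0}^{t_0+T}\, p_{s_i}$, and the surface area is $S = \int_U |q_{s_1}\times q_{s_2}|\, ds_1\, ds_2 = \int_U |(\nabla F)\,p_{s_1} \times (\nabla F)\, p_{s_2}|\, ds_1\, ds_2$, where I abbreviate $\nabla F := \nabla F_{t_0}^{t_0+T}$.

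The key algebraic step is the classical identity $(A u)\times(A v) = (\det A)\, A^{-*}(u\times v)$, valid for any invertible $A\in\mathbb{R}^{3\times 3}$ (here $A^{-*}$ denotes the inverse transpose), which holds because $\nabla F$ is invertible on $U$ by our smoothness and nondegeneracy assumptions. Applying this with $A = \nabla F$, $u = p_{s_1}$, $v = p_{s_2}$ gives $q_{s_1}\times q_{s_2} = (\det\nabla F)\,(\nabla F)^{-*}(p_{s_1}\times p_{s_2})$. Taking the Euclidean norm, $|q_{s_1}\times q_{s_2}|^2 = (\det\nabla F)^2\,\langle (\nabla F)^{-*} w,\ (\nabla F)^{-*} w\rangle$ with $w := p_{s_1}\times p_{s_2}$, and since $(\nabla F)^{-1}(\nabla F)^{-*} = \left((\nabla F)^*\nabla F\right)^{-1} = (C_{t_0}^{t_0+T})^{-1}$, this equals $(\det\nabla F)^2\,\langle w,\ (C_{t_0}^{t_0+T})^{-1} w\rangle$. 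Taking square roots and inserting into the integral yields exactly \eqref{eq:surf_area_material}; the absolute values around $\det(\nabla F)$ and around the inner product are harmless since $(C_{t_0}^{t_0+T})^{-1}$ is positive definite (so the inner product is automatically nonnegative) and are written for safety.

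I do not expect a genuine obstacle here: the result is essentially the change-of-variables/area formula for the map $F_{t_0}^{t_0+T}$ restricted to the surface, dressed up using the cofactor identity. The only points requiring a word of care are (a) justifying invertibility of $\nabla F_{t_0}^{t_0+T}$ on $U$ (immediate, since $F_{t_0}^{t_0+T}$ is a diffeomorphism), and (b) stating the cofactor identity $(Au)\times(Av) = (\det A)A^{-*}(u\times v)$ cleanly — it can be verified by checking $\langle (Au)\times(Av),\ Az\rangle = \det[Au\,|\,Av\,|\,Az] = (\det A)\det[u\,|\,v\,|\,z] = (\det A)\langle u\times v, z\rangle$ for all $z$, and then substituting $z = A^{-1}(\cdot)$. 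With that identity in hand the proof is three lines of manipulation.
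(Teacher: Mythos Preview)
Your proposal is correct and follows essentially the same argument as the paper: parametrize the advected surface by $F_{t_0}^{t_0+T}\circ p$, write the area integral via the cross product of the pushed-forward tangent vectors, and apply the cofactor identity $Mu\times Mv=(\det M)\,M^{-T}(u\times v)$ to rewrite the integrand in terms of $(C_{t_0}^{t_0+T})^{-1}$. The paper's proof is terser (it simply cites the identity without the verification you supply), but the route is identical.
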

\begin{proof}
Since $F_{t_{0}}^{t_{0}+T}\circ p$ is a parameterization of $\mathcal{M}(t_{0}+T)$,
the vectors $\nabla F_{t_{0}}^{t_{0}+T}p_{s_{i}}$ are tangent to
$F_{t_{0}}^{t_{0}+T}\left(U\right)$. The advected surface area, by
definition, is then 
\[
S\left(F_{t_{0}}^{t_{0}+T}\left(U\right)\right)=\int_{U}\sqrt{\left|\langle\nabla F_{t_{0}}^{t_{0}+T}p_{s_{1}}\times\nabla F_{t_{0}}^{t_{0}+T}p_{s_{2}},\nabla F_{t_{0}}^{t_{0}+T}p_{s_{1}}\times\nabla F_{t_{0}}^{t_{0}+T}p_{s_{2}}\rangle\right|}ds_{1}ds_{2}.
\]
This implies formula \ref{eq:surf_area_material} based on the general
identity $Mv\times Mu=\left(\det M\right)M^{-T}v\times u$, which
holds for any invertible square matrix $M$ and vectors $u$ and $v$.
$ $\end{proof}
\begin{prop}
\label{prop:Surf_area_shear} {[}Surface area of LCSs{]} Let $p(s_{1},s_{2})$
be a parameterization of a material surface $\mathcal{M}(t)\subset\mathbb{R}^{3}$
over the time interval $[t_{0},t_{0}+T]$.

(i) Suppose that $\mathcal{M}(t)$ is a repelling hyperbolic LCS.
Then we have
\[
S\left(F_{t_{0}}^{t_{0}+T}\left(U\right)\right)=\int_{U}\left|\det\left(\nabla F_{t_{0}}^{t_{0}+T}\right)\right|\cdot\|p_{s_{1}}\times p_{s_{2}}\|\frac{1}{\sqrt{\lambda_{3}}}ds_{1}ds_{2}.
\]

(ii) Suppose that $\mathcal{M}(t)$ is an attracting hyperbolic LCS.
Then we have 
\[
S\left(F_{t_{0}}^{t_{0}+T}\left(U\right)\right)=\int_{U}\left|\det\left(\nabla F_{t_{0}}^{t_{0}+T}\right)\right|\cdot\|p_{s_{1}}\times p_{s_{2}}\|\frac{1}{\sqrt{\lambda_{1}}}ds_{1}ds_{2}.
\]

(iii) Suppose that $\mathcal{M}(t)$ is a shear LCS. Then we have
\[
S\left(F_{t_{0}}^{t_{0}+T}\left(U\right)\right)=\int_{U}\left|\det\left(\nabla F_{t_{0}}^{t_{0}+T}\right)\right|\cdot\|p_{s_{1}}\times p_{s_{2}}\|\frac{1}{\sqrt[4]{\lambda_{1}\lambda_{3}}}ds_{1}ds_{2}.
\]

In the special case of a volume-preserving flow, we have 
\[
S\left(F_{t_{0}}^{t_{0}+T}\left(U\right)\right)=\int_{U}\|p_{s_{1}}\times p_{s_{2}}\|\sqrt[4]{\lambda_{2}}ds_{1}ds_{2}.
\]
\end{prop}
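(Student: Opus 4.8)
The plan is to apply Lemma \ref{lem:Surf_area} and evaluate the integrand $\langle p_{s_{1}}\times p_{s_{2}},(C_{t_{0}}^{t_{0}+T})^{-1}p_{s_{1}}\times p_{s_{2}}\rangle$ in each of the three cases using Theorem 1. The key observation is that $p_{s_{1}}$ and $p_{s_{2}}$ span the tangent space $T_{x_{0}}\mathcal{M}(t_{0})$, so $p_{s_{1}}\times p_{s_{2}}$ is parallel to the unit normal of $\mathcal{M}(t_{0})$, with $\|p_{s_{1}}\times p_{s_{2}}\|$ as its length. Since $C_{t_{0}}^{t_{0}+T}$ is symmetric positive definite with the orthonormal eigenbasis $\{\xi_{1},\xi_{2},\xi_{3}\}$ and eigenvalues $\lambda_{1}<\lambda_{2}<\lambda_{3}$, the inverse $(C_{t_{0}}^{t_{0}+T})^{-1}$ has the same eigenvectors with eigenvalues $\lambda_{i}^{-1}$. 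Hence if the unit normal is an eigenvector $\xi_{i}$, the quadratic form simply evaluates to $\lambda_{i}^{-1}\|p_{s_{1}}\times p_{s_{2}}\|^{2}$, and the square root in \eqref{eq:surf_area_material} produces the factor $\|p_{s_{1}}\times p_{s_{2}}\|/\sqrt{\lambda_{i}}$.

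For part (i), Theorem 1(i) gives that the unit normal is $\pm\xi_{3}(x_{0})$, so the integrand becomes $\|p_{s_{1}}\times p_{s_{2}}\|/\sqrt{\lambda_{3}}$, yielding the claimed formula; part (ii) is identical with $\xi_{1}$ in place of $\xi_{3}$, using Theorem 1(ii). For part (iii), Theorem 1(iii) gives that the unit normal is $n_{\pm}(x_{0})=\alpha\xi_{1}\pm\beta\xi_{3}$ with $\alpha^{2}=\sqrt{\lambda_{1}}/(\sqrt{\lambda_{1}}+\sqrt{\lambda_{3}})$ and $\beta^{2}=\sqrt{\lambda_{3}}/(\sqrt{\lambda_{1}}+\sqrt{\lambda_{3}})$ (note $n_{\pm}$ is already a unit vector since $\alpha^{2}+\beta^{2}=1$ and $\xi_{1}\perp\xi_{3}$). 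Then $\langle n_{\pm},(C_{t_{0}}^{t_{0}+T})^{-1}n_{\pm}\rangle=\alpha^{2}/\lambda_{1}+\beta^{2}/\lambda_{3}$, and substituting the values of $\alpha^{2},\beta^{2}$ gives
\[
\frac{\alpha^{2}}{\lambda_{1}}+\frac{\beta^{2}}{\lambda_{3}}=\frac{1}{\sqrt{\lambda_{1}}+\sqrt{\lambda_{3}}}\left(\frac{1}{\sqrt{\lambda_{1}}}+\frac{1}{\sqrt{\lambda_{3}}}\right)=\frac{1}{\sqrt{\lambda_{1}}+\sqrt{\lambda_{3}}}\cdot\frac{\sqrt{\lambda_{1}}+\sqrt{\lambda_{3}}}{\sqrt{\lambda_{1}\lambda_{3}}}=\frac{1}{\sqrt{\lambda_{1}\lambda_{3}}}.
\]
Taking the square root produces the factor $\|p_{s_{1}}\times p_{s_{2}}\|/\sqrt[4]{\lambda_{1}\lambda_{3}}$, as claimed.

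The final special case follows by setting $|\det(\nabla F_{t_{0}}^{t_{0}+T})|=1$ for a volume-preserving flow, which also forces $\lambda_{1}\lambda_{2}\lambda_{3}=1$, so $\sqrt[4]{\lambda_{1}\lambda_{3}}=\sqrt[4]{1/\lambda_{2}}$ and $1/\sqrt[4]{\lambda_{1}\lambda_{3}}=\sqrt[4]{\lambda_{2}}$. There is no real obstacle here; the only point requiring minor care is confirming that $n_{\pm}$ as written is genuinely a unit vector (so that $\|p_{s_{1}}\times p_{s_{2}}\|$ is the correct length factor) and correctly simplifying the algebraic expression for the quadratic form — both of which are routine.
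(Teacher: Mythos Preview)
Your proof is correct and follows essentially the same route as the paper: apply Lemma~\ref{lem:Surf_area}, identify $p_{s_{1}}\times p_{s_{2}}$ with $\|p_{s_{1}}\times p_{s_{2}}\|$ times the unit normal prescribed by Theorem~1, and evaluate $\langle n_{0},(C_{t_{0}}^{t_{0}+T})^{-1}n_{0}\rangle$ in each case. You actually give more detail than the paper does --- the paper simply asserts $\langle n_{\pm},(C_{t_{0}}^{t_{0}+T})^{-1}n_{\pm}\rangle=1/\sqrt{\lambda_{1}\lambda_{3}}$ without the intermediate algebra, and does not spell out the volume-preserving reduction via $\lambda_{1}\lambda_{2}\lambda_{3}=1$.
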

\begin{proof}
We proof the result for $ $\emph{(iii)}, as cases \emph{(i) }and
\emph{(ii)} are similar. The shear vector field $n_{\pm}$ has unit
length, and hence $\langle p_{s_{1}}\times p_{s_{2}},p_{s_{1}}\times p_{s_{2}}\rangle=\|p_{s_{1}}\times p_{s_{2}}\|^{2}$
. Observe that

\begin{equation}
\begin{split} & S\left(F_{t_{0}}^{t_{0}+T}\left(U\right)\right)=\int_{U}\left|\det\left(\nabla F_{t_{0}}^{t_{0}+T}\right)\right|\cdot\sqrt{\left|\langle p_{s_{1}}\times p_{s_{2}},\left(C_{t_{0}}^{t_{0}+T}\right)^{-1}p_{s_{1}}\times p_{s_{2}}\rangle\right|}ds_{1}ds_{2}\\
 & =\int_{U}|\det\left(\nabla F_{t_{0}}^{t_{0}+T}\right)|\cdot\|p_{s_{1}}\times p_{s_{2}}\|\sqrt{\left|\langle n_{\pm},\left(C_{t_{0}}^{t_{0}+T}\right)^{-1}n_{\pm}\rangle\right|}ds_{1}ds_{2}.
\end{split}
\label{eq:shear_area}
\end{equation}

Using the definition of $n_{\pm}$, one sees that $\langle n_{\pm},\left(C_{t_{0}}^{t_{0}+T}\right)^{-1}n_{\pm}\rangle=\frac{1}{\sqrt{\lambda_{1}\lambda_{3}}}$.
Substituting this identity into \eqref{eq:shear_area} proves (iii)
of Proposition \ref{prop:Surf_area_shear}. We can deduce the result
for repelling and attracting LCSs similarly, using the fact that $\langle\xi_{3},\left(C_{t_{0}}^{t_{0}+T}\right)^{-1}\xi_{3}\rangle=\frac{1}{\lambda_{3}}$
for a repelling LCS, and $\langle\xi_{1},\left(C_{t_{0}}^{t_{0}+T}\right)^{-1}\xi_{1}\rangle=\frac{1}{\lambda_{1}}$
for an attracting LCS.

\[
\]

\end{proof}
Proposition \ref{prop:Surf_area_shear} shows that the final surface
area along a shear LCS in incompressible flow is obtained by integrating
the initial surface element $\|p_{s_{1}}\times p_{s_{2}}\|$ weighted
by $\sqrt[4]{\lambda_{2}}$ . In \ref{App:par_shear_flow}, we showed
that $ $$\lambda_{2}=1$ golds globally in space and time, and hence
the corresponding shear LCS surface area is conserved. For the steady
ABC flow, we find that $\lambda_{2}$ computed over elliptic LCSs
oscillates around one. Fig. \ref{Fig:lam_2_shearline} shows this
along a specific closed reduced shearline. 

\textbf{}
\begin{figure}
\begin{centering}
\includegraphics[width=0.4\textwidth]{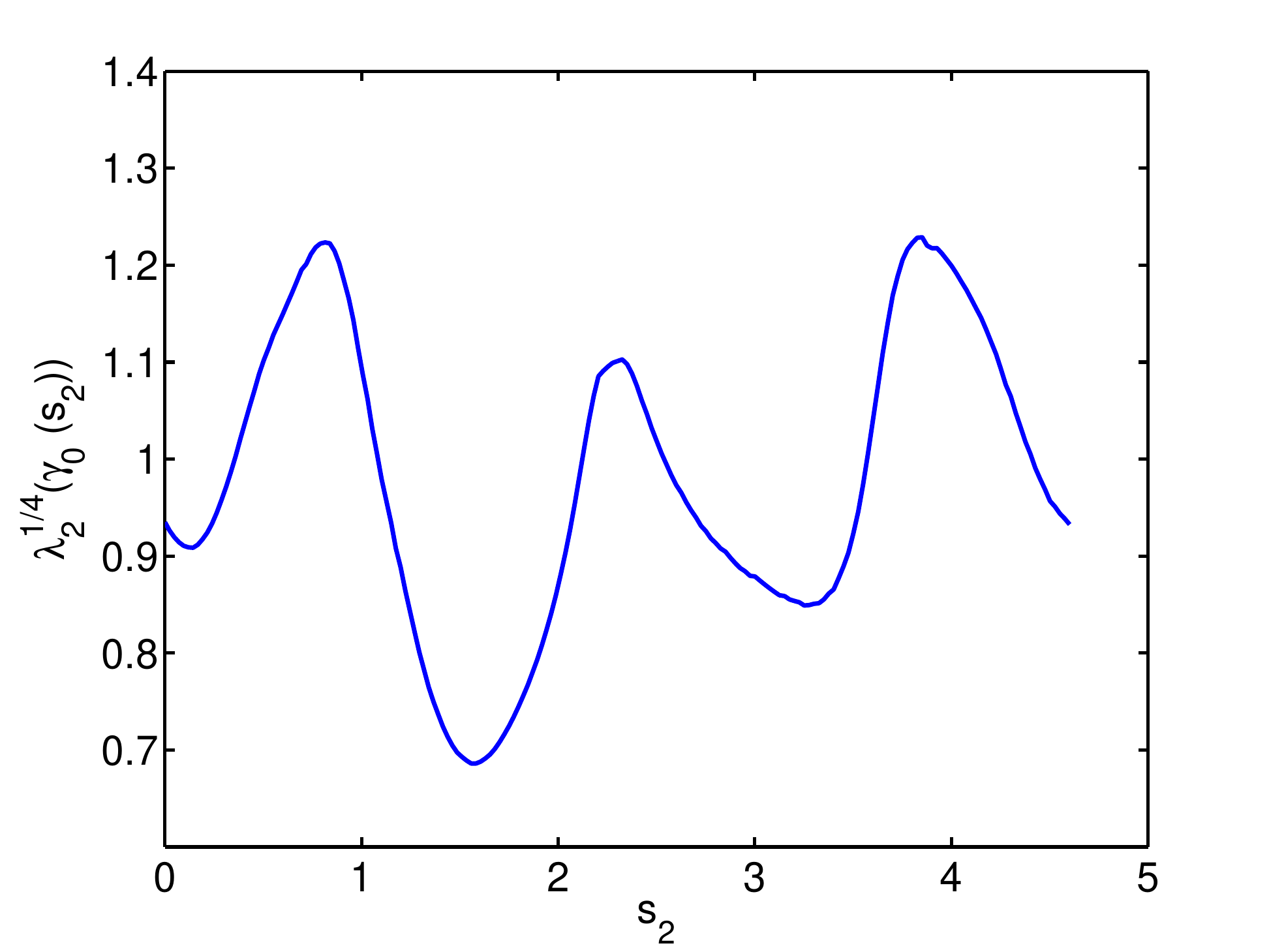}
\par\end{centering}

\textbf{\caption{\textbf{$\sqrt[4]{\lambda_{2}}$ }computed over a closed reduced shearline
$\gamma_{0}$ for the steady ABC flow, obtained from the flow map
$F_{0}^{150}$.}
}\label{Fig:lam_2_shearline}
\end{figure}

\section{Proof of Theorem 1
\[
\]
}

\label{App:proof_main_thm} We first need the following proposition
which allows us to compute the normal repulsion and tangential shear
in terms of $C_{t_{0}}^{t_{0}+T}$ and $n_{0}$.
\begin{prop}
The quantities $\rho$ and $\sigma$ can be expressed as

\begin{equation}
\begin{split} & \rho(x_{0},n_{0})=\frac{1}{\sqrt{\left<n_{0},\left[C_{t_{0}}^{t_{0}+T}(x_{0})\right]^{-1}n_{0}\right>}},\\
 & \sigma(x_{0},n_{0})=\sqrt{\left<n_{0},C_{t_{0}}^{t_{0}+T}(x_{0})n_{0}\right>-\frac{1}{\left<n_{0},\left[C_{t_{0}}^{t_{0}+T}(x_{0})\right]^{-1}n\right>}}.
\end{split}
\end{equation}
\end{prop}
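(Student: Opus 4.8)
The plan is to express both $\rho$ and $\sigma$ purely in terms of the unit normal $n_0$ and the Cauchy--Green tensor, using the key facts that $n_t(x_0)$ is proportional to $[\nabla F_t^{t_0}(x_0)]^*n_0$ and that $\nabla F_{t_0}^t$ is the inverse of $\nabla F_t^{t_0}$ (composed along the trajectory). First I would decompose the advected normal $\nabla F_{t_0}^{t_0+T}(x_0)n_0$ into its component along $n_{t_0+T}(x_0)$ and the remainder; by definition $\rho$ is the length of the former (signed) and $\sigma$ is the length of the latter, and since these two components are orthogonal, Pythagoras gives $\sigma^2 = |\nabla F_{t_0}^{t_0+T}n_0|^2 - \rho^2$. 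The quantity $|\nabla F_{t_0}^{t_0+T}n_0|^2$ is exactly $\langle n_0, C_{t_0}^{t_0+T}(x_0)n_0\rangle$ by the identity \eqref{eq:xi^2}, so once $\rho$ is known in the claimed form, the formula for $\sigma$ follows immediately.

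The heart of the argument is therefore the formula for $\rho$. I would start from the definition $\rho(x_0,n_0) = \langle n_{t_0+T}(x_0), \nabla F_{t_0}^{t_0+T}(x_0)n_0\rangle$ and substitute $n_{t_0+T}(x_0) = [\nabla F_{t_0+T}^{t_0}(x_0)]^* n_0 / \big|[\nabla F_{t_0+T}^{t_0}(x_0)]^* n_0\big|$. Writing $B := \nabla F_{t_0}^{t_0+T}(x_0)$, so that $\nabla F_{t_0+T}^{t_0}(x_0) = B^{-1}$, the inner product in the numerator becomes $\langle (B^{-1})^* n_0, B n_0\rangle = \langle n_0, B^{-1} B n_0\rangle = \langle n_0,n_0\rangle = 1$ because $n_0$ is a unit vector. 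Hence $\rho(x_0,n_0) = 1/\big|(B^{-1})^* n_0\big|$. It remains to compute $\big|(B^{-1})^* n_0\big|^2 = \langle (B^{-1})^* n_0, (B^{-1})^* n_0\rangle = \langle n_0, B^{-1}(B^{-1})^* n_0\rangle = \langle n_0, (B^*B)^{-1} n_0\rangle = \langle n_0, [C_{t_0}^{t_0+T}(x_0)]^{-1} n_0\rangle$, using $(B^{-1})^*(B^{-1})^{**} $ rearrangements and the definition $C_{t_0}^{t_0+T} = B^*B$. This yields $\rho = 1/\sqrt{\langle n_0, [C_{t_0}^{t_0+T}]^{-1} n_0\rangle}$, which is the claimed expression.

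The one point requiring a little care — and the main potential obstacle — is the bookkeeping of adjoints and inverses: one must be confident that $\nabla F_{t_0+T}^{t_0}(x_0) = [\nabla F_{t_0}^{t_0+T}(x_0)]^{-1}$ (a consequence of the group property of flow maps, $F_{t_0+T}^{t_0}\circ F_{t_0}^{t_0+T} = \mathrm{id}$, differentiated at $x_0$), and that adjoint and inverse commute, $(B^{-1})^* = (B^*)^{-1}$, so that $B^{-1}(B^{-1})^* = B^{-1}(B^*)^{-1} = (B^*B)^{-1}$. None of this is deep, but it is exactly where a sign or transpose slip would occur. Once the $\rho$ identity is in hand, the $\sigma$ identity is a one-line consequence of the orthogonal decomposition and \eqref{eq:xi^2}, so I would present the $\rho$ computation in full and then dispatch $\sigma$ via $\sigma^2 = \langle n_0, C_{t_0}^{t_0+T}(x_0)n_0\rangle - \rho^2$.
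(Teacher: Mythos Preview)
Your argument is correct and follows the same route as the paper: the paper derives the $\sigma$ formula by expanding $\left|\nabla F_{t_0}^{t_0+T}n_0-\langle n_t,\nabla F_{t_0}^{t_0+T}n_0\rangle n_t\right|^2$ to obtain $\langle n_0,C_{t_0}^{t_0+T}n_0\rangle-\rho^2$, which is exactly your Pythagoras step written out. For the $\rho$ identity the paper simply cites an earlier reference, whereas you supply the full computation via $B=\nabla F_{t_0}^{t_0+T}$, $\langle (B^{-1})^*n_0,Bn_0\rangle=1$, and $B^{-1}(B^{-1})^*=(B^*B)^{-1}$; this is precisely the standard derivation and your care about the chain-rule identity $\nabla F_{t_0+T}^{t_0}=B^{-1}$ and $(B^{-1})^*=(B^*)^{-1}$ is well placed.
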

\begin{proof}
The proof of the first identity can be found in \citep{var_theory}.
As for the second identity, we make use of the following formula for
the unit normal of a multi-dimensional material surface \citep{var_theory}:
\begin{equation}
n_{t}=\frac{\left[\left(\nabla F^{t_{0}}_{t}\right)^{*}n_{0}\right]}{\left|\left(\nabla F^{t_{0}}_{t}\right)^{*}n_{0}\right|}.
\end{equation}
We then obtain 
\begin{eqnarray*}
\left[\sigma(x_{0},n_{0})\right]^{2} & = & \left\vert \nabla F_{t_{0}}^{t_{0}+T}(x_{0})n_{0}-\left\langle n_{t},\nabla F_{t_{0}}^{t_{0}+T}(x_{0})n_{0}\right\rangle n_{t}\right\vert \\
 & = & \left\langle n_{0},\left(\nabla F_{t_{0}}^{t_{0}+T}(x_{0})\right)^{\ast}\nabla F_{t_{0}}^{t_{0}+T}(x_{0})n_{0}\right\rangle -2\left\langle \nabla F_{t_{0}}^{t_{0}+T}(x_{0})n_{0},\left\langle n_{t},\nabla F_{t_{0}}^{t_{0}+T}(x_{0})n_{0}\right\rangle n_{t}\right\rangle \\
 &  & +\left\langle \left\langle n_{t},\nabla F_{t_{0}}^{t_{0}+T}(x_{0})n_{0}\right\rangle n_{t},\left\langle n_{t},\nabla F_{t_{0}}^{t_{0}+T}(x_{0})n_{0}\right\rangle n_{t}\right\rangle \\
 & = & \left\langle n_{0},C_{t_{0}}^{t_{0}+T}(x_{0})n_{0}\right\rangle -\left[\rho(x_{0},n_{0})\right]^{2}\\
 & = & \left\langle n_{0},C_{t_{0}}^{t_{0}+T}(x_{0})n_{0}\right\rangle -\frac{1}{\left\langle n_{0},\left[C_{t_{0}}^{t_{0}+T}(x_{0})\right]^{-1}n_{0}\right\rangle },
\end{eqnarray*}
which proves the proposition. 
\end{proof}
We are now ready to prove Theorem 1. Let $S$ be a hyperbolic LCS
and $x_{0}$ a point on $S.$ Thus, by definition, for any other surface
$\tilde{S}$ passing through $x_{0}$ such that $T_{x_{0}}\tilde{S}\neq T_{x_{0}}S$,
the normal repulsion rate $\rho(x_{0},n_{0}^{\tilde{S}}$) along $\tilde{S}$
at $x_{0}$ is smaller than the normal repulsion rate $\rho(x_{0},n_{0}^{S}$)
along $S$ at $x_{0}$. Thus, at each point $x_{0}$ on $S$, the
quantity $\rho(x_{0},n_{0}$) is maximized with respect to changes
in $n_{0}$. Thus, we want to solve the following constrained optimization
problem: maximize $\rho(x_{0},n_{0}$) with respect to $n_{0}$ under
the constraint that $\|n_{0}\|=1$. To this end, expand $n_{0}$ in
the eigen-basis $\left\{ \xi_{1,},\xi_{2},\xi_{3}\right\} $ of the
Cauchy--Green strain tensor $C_{t_{0}}^{t_{0}+T}(x_{0}):$ 
\[
n_{0}=\sum_{i=1}^{3}n_{i}\xi_{i}.
\]
We then have 
\[
\rho(x_{0},n_{0})=\left(\sum_{i=1}^{3}\frac{n_{i}^{2}}{\lambda_{i}}\right)^{-1},
\]
and setting the gradient of $\rho(x_{0},n_{0}$) proportional to the
gradient of the constraint $\|n_{0}\|=1$ yields

\begin{equation}
\begin{split} & -2\rho^{2}\lambda_{2}\lambda_{3}n_{1}=\gamma n_{1},\\
 & -2\rho^{2}\lambda_{1}\lambda_{3}n_{2}=\gamma n_{2}.\\
 & -2\rho^{2}\lambda_{1}\lambda_{2}n_{n}=\gamma n_{3},
\end{split}
\label{lagrange_repulsion}
\end{equation}
where $\gamma$ is an appropriate constant. Thus, assuming that $\lambda_{1}>0$,
it follows that $ $two of the coordinates $n_{1},n_{2},n_{3}$ must
be zero. Therefore, the only extremum directions of the normal repulsion
rate are the eigenvectors $\xi_{i}$ of the Cauchy-Green strain tensor.
Since $\rho(x_{0},\xi_{i})=\sqrt{\lambda_{i}}$, it follows that $n_{0}=\xi_{3}$
is the global maximum of the normal repulsion $\rho$ with respect
to changes in $n_{0}$. Therefore, a repelling hyperbolic LCS is necessarily
orthogonal to $\xi_{3}$. The proof of statement $(ii)$ is analogous,
but involves the global minimum of the normal repulsion rate.

The prove statement (iii), we need to find the maximizing normal directions
$n_{0}$ of the tangential shear $\sigma(x_{0},n_{0})$ under the
constraint that $\|n_{0}\|=1$. We again represent $n_{0}$ in the
eigen-basis $\left\{ \xi_{1,},\xi_{2},\xi_{3}\right\} $ of $C_{t_{0}}^{t_{0}+T}(x_{0})$
as 
\begin{equation}
n_{0}=\sum_{i=1}^{3}n_{i}\xi_{i},\qquad\sum_{i=1}^{3}n_{i}^{2}=1,\label{ni}
\end{equation}
and seek to maximize $\sigma(n_{1,},n_{2},n_{3}):=\sigma(x_{0,}n_{0})$
subject to the constraint $\sum n_{i}^{2}=1$. Note that 
\[
\sigma(n_{1},n_{2},n_{3})=\sqrt{{\sum\lambda_{i}n_{i}^{2}-\frac{{1}}{\sum\lambda_{i}^{-1}n_{i}^{2}}}}.
\]
Setting the gradient of $\sigma(n_{1},n_{2},n_{3})$ proportional
to the gradient of the constraint $\sum n_{i}^{2}=1$ leads to the
equations 
\[
\frac{{1}}{2\sigma}\left(2n_{i}\lambda_{i}+\frac{{2n_{i}}}{\lambda_{i}\left(\sum\lambda_{i}^{-1}n_{i}^{2}\right)^{2}}\right)=2\gamma n_{i},
\]
which can also be rewritten as 
\[
\frac{{1}}{2\sigma}\left(2n_{i}\lambda_{i}+\frac{{2n_{i}}}{\lambda_{i}\left(\sum\lambda_{i}^{-1}n_{i}^{2}\right)^{2}}\right)=2\gamma n_{i},
\]
 or, equivalently, 

\begin{equation}
n_{i}\left[\left(\sum_{j=1}^{3}\frac{n_{j}^{2}}{\lambda_{j}}\right)^{2}\lambda_{i}\mathbf{+}\frac{1}{\lambda_{i}}-2\sigma\gamma\right]=0,\qquad i=1,2,3.\label{niplus}
\end{equation}

Assume now that for some index $i$, we have $n_{i}\neq0$, i.e.,
the $i^{\mathrm{th}}$ component of the unknown normal vector $n_{0}$
of the shear LCS is nonzero. In that case, we must have 
\begin{equation}
\frac{2\sigma\gamma-\frac{1}{\lambda_{i}}}{\lambda_{i}}=K\overset{def.}{=}\left(\sum_{j=1}^{3}\frac{n_{j}^{2}}{\lambda_{j}}\right)^{2},\label{niplus1}
\end{equation}
where the constant $K>0$ is the same for any choice of $i$. Taking
the square of equation (\ref{niplus1}) gives 
\[
\lambda_{i}^{2}-\frac{2\sigma\gamma}{K}\lambda_{i}+\frac{1}{K}=0,
\]
and hence there can be \emph{at most two} distinct $\lambda_{i}$
values for which (\ref{niplus1}) holds. Also note that there has
to be \emph{at least two} distinct $\lambda_{i}$ values for which
(\ref{niplus1}) holds, otherwise \textbf{$n_{0}$ }would be an eigenvector,
and hence a local minimizer of the tangential shear . We conclude
that there exist \emph{precisely two} eigenvalues, $\lambda_{k}$
and $\lambda_{l}>\lambda_{k}$, of $C_{t_{0}}^{t_{0}+T}$ that satisfy
(\ref{niplus1}).

All eigenvalues of $C_{t_{0}}^{t_{0}+T}(x_{0})$ are simple by assumption.
Therefore, by our argument above, $n_{0}$ must be of the form 
\begin{equation}
\, n_{0}=n_{k}\xi_{k}+n_{l}\xi_{l},\qquad n_{k}^{2}+n_{l}^{2}=1.\label{eq:n0rest}
\end{equation}
Substituting the expression (\ref{eq:n0rest}) into (\ref{niplus1})
with $i=k$ and $i=l$, eliminating the common constant $2\sigma\gamma$
from the resulting two equations, and using the notation 
\[
a^{2}=n_{k}^{2},\qquad b^{2}=n_{l}^{2},
\]
we obtain the system of equations 
\begin{eqnarray}
\lambda_{k}\left(\frac{a^{2}}{\lambda_{k}}+\frac{b^{2}}{\lambda_{l}}\right)^{2}+\frac{1}{\lambda_{k}} & = & \lambda_{l}\left(\frac{a^{2}}{\lambda_{k}}+\frac{b^{2}}{\lambda_{l}}\right)^{2}+\frac{1}{\lambda_{l}},\\
a^{2}+b^{2} & = & 1,
\end{eqnarray}
for the unknowns $a^{2}$ and $b^{2}$. The solution of these equations
is given by 
\[
a^{2}=\frac{\sqrt{\lambda_{k}}}{\sqrt{\lambda_{k}}+\sqrt{\lambda_{l}}},\qquad b^{2}=\frac{\sqrt{\lambda_{l}}}{\sqrt{\lambda_{k}}+\sqrt{\lambda_{l}}}.
\]

Thus $n_{0}$ must take the more specific form 
\begin{eqnarray}
n_{0} & = & a\xi_{k}+b\xi_{l},\qquad\lambda_{k}<\lambda_{l},\nonumber \\
a^{2} & = & \frac{\sqrt{\lambda_{k}}}{\sqrt{\lambda_{k}}+\sqrt{\lambda_{l}}},\qquad b^{2}=\frac{\sqrt{\lambda_{l}}}{\sqrt{\lambda_{k}}+\sqrt{\lambda_{l}}},\label{eq:n0final}
\end{eqnarray}
for some choice of $k,l\in\left\{ 1,2,3\right\} .$ We now check which
of these extrema are indeed local maxima. Computing the tangential
shear for expressions \eqref{eq:n0final} yields 

\begin{eqnarray}
\sigma(x_{0},n_{0}) & = & \sqrt{\left\langle n_{0},C_{t_{0}}^{t_{0}+T}n_{0}\right\rangle -\frac{1}{\left\langle n_{0},\left[C_{t_{0}}^{t_{0}+T}\right]^{-1}n_{0}\right\rangle }}\nonumber \\
 & = & \sqrt{a^{2}\lambda_{k}+b^{2}\mathbf{\lambda}_{l}-\frac{\lambda_{k}\lambda_{l}}{a^{2}\lambda_{l}+b^{2}\lambda_{k}}}\nonumber \\
 & = & \left\vert \sqrt{\lambda_{l}}-\sqrt{\lambda_{k}}\right\vert .\label{eq:muevaluated}
\end{eqnarray}

Next we prove that $k=1$ and $l=3$ must hold for the normal $n_{0}$
in formula (\ref{eq:n0final}). Assume the contrary, i.e., assume
that the pair of eigenvalues $(\lambda_{k},\lambda_{l})$ in formula
(\ref{eq:n0final}) does not coincide with the pair $(\lambda_{1},\lambda_{2})$.
We only consider the case of $\lambda_{k}\neq\lambda_{1}$, because
the case of $\lambda_{l}\neq\lambda_{3}$ can be handled in an identical
fashion. Assuming $\lambda_{k}\neq\lambda_{1}$, define the unit normal
\[
\hat{n}_{0}=\sqrt{\frac{\sqrt{\lambda_{1}}}{\sqrt{\lambda_{1}}+\sqrt{\lambda_{3}}}}\mathbf{\xi}_{1}+\sqrt{\frac{\sqrt{\lambda_{3}}}{\sqrt{\lambda_{1}}+\sqrt{\lambda_{3}}}}\xi_{3}.
\]
Note that $\hat{n}_{0}\neq n_{0}$ by our assumption, and by formula
(\ref{eq:muevaluated}), we have 
\[
\sigma(x_{0},n_{0}(x_{0}))=\left|\sqrt{\lambda_{l}}-\sqrt{\lambda_{k}}\right|<\left|\sqrt{\lambda_{n}}-\sqrt{\lambda_{1}}\right|=\sigma(x_{0},\hat{n}_{0}),
\]
which contradicts our maximality assumption for shear LCS in Definition
1 (namely that a shear LCS has tangential shear no less than any perturbations
of its normal direction $n_{0}$).

We have, therefore, obtained that for any shear LCS, the normal vector
$n_{0}(x_{0}$) featured in (\ref{eq:n0final}) must necessarily be
of the more specific form 
\begin{equation}
n_{0}=\pm\sqrt{\frac{\sqrt{\lambda_{1}}}{\sqrt{\lambda_{1}}+\sqrt{\lambda_{3}}}}\xi_{1}\pm\sqrt{\frac{\sqrt{\lambda_{3}}}{\sqrt{\lambda_{1}}+\sqrt{\lambda_{3}}}}\xi_{3},\label{eq:n0proof}
\end{equation}
where $\lambda_{1}$ and $\lambda_{n}$ are multiplicity-one eigenvalues
of the Cauchy-Green strain tensor $C_{t_{0}}^{t_{0}+T}(x_{0})$, and
the two $\pm$ signs can be chosen independently of one another. All
in all, formula (\ref{eq:n0proof}) defines two linearly independent
unit normal directions, corresponding to maximal positive and maximal
negative shear. This proves that a shear LCS is necessarily orthogonal
to either $n_{+}$ or $n_{-}$.

\section{Proof of Theorem 2 and relation to Frobenius Integrability \label{App:proof_thm2} }

\subsection{Proof of Theorem 2}

For a general three-dimensional vector field $v$, consider the problem
of finding a surface $S$ orthogonal to $v$. The following proposition
shows that a necessary condition for the existence of $S$ is that
the helicity of $v$ 
\begin{equation}
H_{v}(x)=\left<\nabla\times v,v\right>
\end{equation}
must vanish on $S$. This fact was pointed out in \citep{Palm}; here
we provide an alternative proof using Stokes' Theorem.
\begin{prop}
\label{Helicity_prop} Let $v$ be a smooth vector field in $\mathbb{R}^{3}$
and $S$ a surface orthogonal to $v$. Then for any $x\in S$, the
helicity of $v$must vanish, i.e., 
\end{prop}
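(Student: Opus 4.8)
The plan is to prove Proposition~\ref{Helicity_prop} by contradiction, assuming $H_v(x_0)\neq 0$ at some point $x_0\in S$ and deriving a contradiction via Stokes' theorem. First I would assume without loss of generality that $H_v(x_0)>0$; by continuity of $\nabla\times v$ and $v$, the helicity stays positive on a small neighborhood $S_\epsilon\subset S$ of $x_0$. The key observation is that on $S$, the vector field $v$ is everywhere normal to $S$, so $v$ has no tangential component along $S$; in particular, the line integral $\oint_{\partial\Sigma} v\cdot d\mathbf{r}$ vanishes for the boundary of any region $\Sigma\subset S$, since $d\mathbf{r}$ is tangent to $S$ and hence orthogonal to $v$.

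Next I would apply Stokes' theorem to a small disk $\Sigma\subset S_\epsilon$ around $x_0$ with boundary $\partial\Sigma$:
\begin{equation}
0=\oint_{\partial\Sigma} v\cdot d\mathbf{r}=\int_\Sigma (\nabla\times v)\cdot \hat{n}\,dA,
\end{equation}
where $\hat{n}$ is the unit normal to $S$, which (up to sign) is exactly $v/|v|$ since $v\perp S$. Hence the integrand equals $(\nabla\times v)\cdot v/|v| = H_v/|v|$, which is strictly positive on $\Sigma$ (assuming $v\neq 0$ near $x_0$; if $v(x_0)=0$ the normality condition is vacuous there, so one restricts to points where $v\neq 0$, or handles this degenerate case separately). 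Therefore the right-hand integral is strictly positive, contradicting that the left-hand side is zero. This forces $H_v(x_0)=0$.

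The main obstacle I anticipate is handling the orientation and the possible vanishing of $v$ cleanly: one must make sure $S$ is orientable near $x_0$ so that $\hat n$ is well-defined, confirm that $\hat n=\pm v/|v|$ (which is immediate from $v\perp T_{x_0}S$ combined with $\dim S=2$ in $\mathbb{R}^3$), and note that the sign ambiguity is harmless since it only flips the sign of the surface integral, and a strictly-signed nonzero integral cannot equal zero regardless. The degenerate possibility $v(x_0)=0$ should be acknowledged—at such points the statement that $S$ is ``orthogonal to $v$'' imposes no constraint, so either one assumes $v\neq 0$ on $S$ or observes that the conclusion $H_v=0$ is trivially needed only where the orthogonality is meaningful. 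Everything else is a routine application of Stokes' theorem, so I expect the proof to be short.
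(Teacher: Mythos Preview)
Your argument is correct and is essentially the same as the paper's: both apply Stokes' theorem to a small patch $\Sigma\subset S$, use the orthogonality of $v$ to $S$ to kill the boundary line integral, and then identify the surface integrand with $H_v/|v|$ (equivalently $H_v/\langle v,n\rangle$) to conclude $H_v=0$ pointwise. The only cosmetic difference is that you phrase it as a contradiction while the paper argues directly that the vanishing of the integral over every such patch forces the integrand to vanish; your extra care about orientation and the degenerate case $v=0$ is a welcome refinement.
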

\begin{equation}
H_{v}(x):=\left<\nabla\times v(x),v(x)\right>=0.
\end{equation}

\begin{proof}
Consider an open neighborhood $D\subset S$ of $x$ in $S$ . By Stokes'
Theorem, we have that 
\begin{equation}
\int_{D}\left(\nabla\times v\right)\cdot n=\int_{C}v\cdot dr.\label{Stokes}
\end{equation}
Since $v$ is orthogonal to $S$, the integral on the right-hand side
of \eqref{Stokes} is zero. Thus, since $v=\left<v,n\right>n$, we
have 
\begin{equation}
\int_{D}\frac{1}{\left<v,n\right>}H_{v}\, dA=0.
\end{equation}
Since $D$ was arbitrary, $H_{v}$ must vanish on $S$. 
\end{proof}
Theorem 2 then follows directly from Theorem 1 and Proposition \ref{Helicity_prop}.

\subsection{Relation to Frobenius Integrability}

We can rephrase the problem of computing a surface orthogonal to $\xi_{3}$
for repelling hyperbolic, $\xi_{1}$ for attracting hyperbolic, and
$n_{\pm}$ for shear LCS as finding surfaces tangent to $\left\{ \xi_{1},\xi_{2}\right\} $,
$\left\{ \xi_{2},\xi_{3}\right\} $ and $\left\{ \xi_{2},n_{\pm}\times\xi_{2}\right\} $,
respectively. The problem of finding surfaces tangent to two specified
vector fields is then related to the Frobenius Integrability Theorem\textbf{
}\citep{AMR}. 

As a special case, this theorem states that if $X$ and $Y$ are two
vector fields in $\mathbb{R}^{3}$, then necessary and sufficient
conditions for the existence of a\emph{ foliation }of $\mathbb{R}^{3}$
by surfaces tangent $X$ and $Y$ is that
\begin{equation}
\left[X,Y\right]\in\text{Span}\left\{ X,Y\right\} \label{eq:Frob}
\end{equation}
 
\[
\]

In our context, we do not seek to find global foliations of $\mathbb{R}^{3}$,
but only individual, isolated surfaces. Nevertheless, as we show in
this section, the Frobenius condition \eqref{eq:Frob} is still a
necessary, albeit not sufficient condition for the existence of such
surfaces. 

If $X,Y$ and $Z$ are smooth vector fields in $\mathbb{R}^{3}$,
consider both the Frobenius and helicity conditions 
\begin{equation}
\begin{split} & F_{X,Y,Z}=\left<\left[X,Y\right],Z\right>=0,\\
 & H_{V}\left<\nabla\times Z,Z\right>=0.
\end{split}
\end{equation}
We show in Proposition \ref{prop:Frob_hel_equiv} that the zero sets
of $F$ and $H_{Z}$ coincide, provided that $X,Y$ and $Z$ are pairwise
orthogonal vector fields. Combined with Theorem 2, Proposition \ref{prop:Frob_hel_equiv}
then shows that \eqref{eq:Frob} is a necessary condition. We make
use of the following lemma, which states that the zero sets of $F$
and $H_{Z}$ are invariant under a nonlinear rescaling of the three
vector fields involved.
\begin{lem}
\label{lem:scaling} Let $X,Y$ and $Z$ be pairwise orthogonal vector
fields in $\mathbb{R}^{3}$, and $\phi_{1}$, $\phi_{2}$, $\phi_{3}$
be nonzero scalar function on $\mathbb{R}^{3}$. Then we have that
\begin{equation}
\begin{split} & F_{\phi_{1}X,\phi_{2}Y,\phi_{3}Z}=\phi_{1}\phi_{2}\phi_{3}F_{X,Y,Z},\\
 & H_{\phi_{3}Z}=\phi_{3}^{2}H_{Z}.
\end{split}
\end{equation}
In particular, the zero sets of $F_{\phi_{1}X,\phi_{2}Y,\phi_{3}Z}$
and $H_{\phi_{3}Z}$ coincide with those of $F_{X,Y,Z}$ and $H_{Z}$,
respectively. \end{lem}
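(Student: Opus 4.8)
The plan is to verify the two scaling identities by direct computation, using repeatedly the hypothesis that $X$, $Y$, $Z$ are \emph{pairwise orthogonal}, which is the only place the orthogonality is needed. First, for the helicity identity, I would expand $\nabla\times(\phi_3 Z) = \phi_3 (\nabla\times Z) + (\nabla\phi_3)\times Z$, so that
\[
H_{\phi_3 Z} = \left\langle \phi_3(\nabla\times Z) + (\nabla\phi_3)\times Z,\ \phi_3 Z\right\rangle = \phi_3^2 H_Z + \phi_3 \left\langle (\nabla\phi_3)\times Z,\ Z\right\rangle.
\]
The second term vanishes because $(\nabla\phi_3)\times Z$ is orthogonal to $Z$ by the elementary property of the cross product — note this needs no hypothesis on $X$, $Y$ at all. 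That settles $H_{\phi_3 Z} = \phi_3^2 H_Z$.

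Next, for the Frobenius identity, I would start from the standard expansion of the Lie bracket under scaling of both arguments:
\[
[\phi_1 X,\ \phi_2 Y] = \phi_1\phi_2 [X,Y] + \phi_1 (X\phi_2) Y - \phi_2 (Y\phi_1) X,
\]
where $X\phi_2$ denotes the directional derivative $\langle \nabla\phi_2, X\rangle$, and similarly for $Y\phi_1$. Pairing with $\phi_3 Z$ gives
\[
F_{\phi_1 X,\phi_2 Y,\phi_3 Z} = \phi_1\phi_2\phi_3 \left\langle [X,Y], Z\right\rangle + \phi_1\phi_3 (X\phi_2)\langle Y, Z\rangle - \phi_2\phi_3 (Y\phi_1)\langle X, Z\rangle.
\]
Here the pairwise orthogonality hypothesis kills the last two terms since $\langle Y, Z\rangle = \langle X, Z\rangle = 0$, leaving exactly $F_{\phi_1 X,\phi_2 Y,\phi_3 Z} = \phi_1\phi_2\phi_3\, F_{X,Y,Z}$.

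Finally, since $\phi_1$, $\phi_2$, $\phi_3$ are nowhere zero, the prefactors $\phi_1\phi_2\phi_3$ and $\phi_3^2$ never vanish, so $F_{\phi_1 X,\phi_2 Y,\phi_3 Z}(x) = 0 \iff F_{X,Y,Z}(x) = 0$ and likewise for the helicity, giving the claimed coincidence of zero sets. I do not anticipate a genuine obstacle here; the only subtlety is bookkeeping — making sure the Leibniz-rule terms in the Lie bracket and in $\nabla\times(\phi_3 Z)$ are the ones that land orthogonal to $Z$ (or multiply a vanishing inner product), which is precisely where the pairwise-orthogonality assumption is consumed. One should also note, for later use in deducing the necessity of the Frobenius condition, that in the application $X=\xi_1$, $Y=\xi_2$, $Z=\xi_3$ (or the shear analogue) are indeed orthonormal, and that the eigenvector fields $\xi_i$ are only smooth up to sign and rescaling — which is exactly the ambiguity this lemma shows to be harmless for the zero sets in question.
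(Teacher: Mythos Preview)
Your proof is correct and follows essentially the same approach as the paper: expand the Lie bracket and the curl by the Leibniz rule, then observe that the extra terms are killed by orthogonality. The only cosmetic difference is which orthogonality relations get invoked for the Frobenius identity---you use $\langle X,Z\rangle=\langle Y,Z\rangle=0$ after pairing the expanded bracket with $Z$, whereas the paper's computation eliminates terms via $\langle X,Y\rangle=0$; both are covered by the pairwise-orthogonality hypothesis and lead to the same conclusion.
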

\begin{proof}
By definition, we have that 
\begin{equation}
\begin{split} & F_{\phi_{1}X,\phi_{2}Y,\phi_{3}Z}=\left<\left[\phi_{1}X,\phi_{2}Y\right],\phi_{3}Z\right>\\
 & =\phi_{3}\left<D(\phi_{1}X)Y-D(\phi_{2}Y)X,Z\right>=\phi_{3}\left<\left(\nabla\phi_{1}\right)X^{T}Y+\phi_{1}\phi_{2}DXY-\phi_{1}\left(\nabla\phi_{2}\right)Y^{T}X-\phi_{1}\phi_{2}DYX,Z\right>.
\end{split}
\end{equation}
However, $X^{T}Y=Y^{T}X=0$ by our orthogonality assumption, and hence
\begin{equation}
F_{\phi_{1}X,\phi_{2}Y,\phi_{3}Z}=\phi_{1}\phi_{2}\phi_{3}\left<\left[X,Y\right],Z\right>=\phi_{1}\phi_{2}\phi_{3}F_{X,Y,Z}.
\end{equation}
As for the claim on the helicity, note that 
\begin{equation}
\begin{split} & H_{\phi_{3}Z}=\left<\nabla\times(\phi_{3}Z),\phi_{3}Z\right>\\
 & =\phi_{3}\left<\nabla\phi_{3}\times Z+\phi_{3}\left(\nabla\times Z\right),Z\right>=\phi_{3}^{2}\left<\nabla\times Z,Z\right>
\end{split}
\end{equation}
\end{proof}
\begin{prop}
\label{prop:Frob_hel_equiv} Let $X,Y$ and $Z$ be a smoothly varying,
pairwise orthogonal family of vector fields in $\mathbb{R}^{3}$.
Then the zero set of $F=\left<\left[X,Y\right],Z\right>$ coincides
with the zero set of $H_{Z}\left<\nabla\times Z,Z\right>=0$. \end{prop}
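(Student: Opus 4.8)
The plan is to reduce the statement to the case of an \emph{orthonormal} frame and then to prove a clean pointwise identity $H_Z=\pm F_{X,Y,Z}$. The reduction uses Lemma~\ref{lem:scaling}: assuming (as in all cases of interest, where the fields are the unit eigenvectors $\xi_i$ or the unit vectors $n_\pm$) that $X,Y,Z$ are nowhere zero, write $X=|X|\hat X$, $Y=|Y|\hat Y$, $Z=|Z|\hat Z$ with $\hat X,\hat Y,\hat Z$ the associated unit fields. The lemma gives $F_{X,Y,Z}=|X|\,|Y|\,|Z|\,F_{\hat X,\hat Y,\hat Z}$ and $H_Z=|Z|^{2}H_{\hat Z}$, and since these prefactors never vanish it suffices to prove the claim for the orthonormal frame $\{\hat X,\hat Y,\hat Z\}$. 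At each point this frame is either positively or negatively oriented; as $H_{-W}=H_W$ while $F_{X,Y,-Z}=-F_{X,Y,Z}$, the orientation sign affects neither zero set, so one may argue pointwise under the normalization $\hat Z=\hat X\times\hat Y$.

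The heart of the argument is the pointwise identity. I would let $\eta:=\hat Z^{\flat}$ denote the $1$-form $v\mapsto\langle\hat Z,v\rangle$. Since $\hat X,\hat Y\perp\hat Z$ we have $\eta(\hat X)=\eta(\hat Y)=0$, so Cartan's formula gives $d\eta(\hat X,\hat Y)=\hat X(\eta(\hat Y))-\hat Y(\eta(\hat X))-\eta([\hat X,\hat Y])=-\langle\hat Z,[\hat X,\hat Y]\rangle=-F_{\hat X,\hat Y,\hat Z}$. On the other hand, in Euclidean $\mathbb{R}^3$ one has $d(\hat Z^{\flat})=\star\,(\nabla\times\hat Z)^{\flat}$, and for any $1$-form $\alpha$ the Hodge star satisfies $(\star\alpha)(\hat X,\hat Y)=\alpha(\hat X\times\hat Y)=\alpha(\hat Z)$ on the positively oriented orthonormal pair $(\hat X,\hat Y)$; taking $\alpha=(\nabla\times\hat Z)^{\flat}$ gives $d\eta(\hat X,\hat Y)=\langle\nabla\times\hat Z,\hat Z\rangle=H_{\hat Z}$. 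Comparing the two evaluations yields $H_{\hat Z}=-F_{\hat X,\hat Y,\hat Z}$, so these two scalar functions have identical zero sets. (An equivalent, purely local route: writing $\Gamma^{k}_{ij}=\langle\nabla_{e_i}e_j,e_k\rangle$ for the flat connection, torsion-freeness gives $F=\Gamma^{3}_{12}-\Gamma^{3}_{21}$; the frame formula $\nabla\times v=\sum_i e_i\times\nabla_{e_i}v$ gives $H_{e_3}=\Gamma^{2}_{13}-\Gamma^{1}_{23}$; and metric compatibility $\Gamma^{k}_{ij}=-\Gamma^{j}_{ik}$ turns the latter into $-(\Gamma^{3}_{12}-\Gamma^{3}_{21})=-F$.)

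Putting the pieces together: the zero set of $F_{X,Y,Z}$ coincides with that of $F_{\hat X,\hat Y,\hat Z}$ by Lemma~\ref{lem:scaling}, which coincides with that of $H_{\hat Z}$ by the pointwise identity, which in turn coincides with that of $H_Z$ again by Lemma~\ref{lem:scaling}; hence the zero sets of $F_{X,Y,Z}$ and $H_Z$ agree. Combined with Theorem~2 this also establishes that the Frobenius condition \eqref{eq:Frob} is a necessary condition for the existence of the surfaces sought. I expect the only real difficulty to be bookkeeping of the orientation and sign conventions in the relations $d(v^{\flat})=\star(\nabla\times v)^{\flat}$ and $(\star\alpha)(\hat X,\hat Y)=\alpha(\hat Z)$: an overall sign is immaterial for the zero-set assertion but must be tracked carefully if one wants to record the sharp identity $H_{\hat Z}=-F_{\hat X,\hat Y,\hat Z}$. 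Everything else is a direct application of Cartan's formula (or of the structure equations) together with the already-established scaling lemma.
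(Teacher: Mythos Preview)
Your argument is correct and arrives at the same pointwise identity $H_{\hat Z}=\pm F_{\hat X,\hat Y,\hat Z}$ for an orthonormal frame, after the same reduction via Lemma~\ref{lem:scaling}. The route, however, is genuinely different from the paper's. The paper stays entirely within elementary vector calculus: it differentiates the orthogonality relations $\langle X,Z\rangle=\langle Y,Z\rangle=0$ to rewrite $\langle[X,Y],Z\rangle$ as $\langle X,[\nabla Z-(\nabla Z)^{T}]Y\rangle$, then invokes the matrix identity $[\nabla a-(\nabla a)^{T}]b=(\nabla\times a)\times b$ together with the scalar triple product to land on $\langle\nabla\times Z,Z\rangle$. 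You instead pass through exterior calculus: Cartan's formula for $d\eta(\hat X,\hat Y)$ with $\eta=\hat Z^{\flat}$ gives $-F$ directly, while the Euclidean relation $d(v^{\flat})=\star(\nabla\times v)^{\flat}$ and $(\star\alpha)(\hat X,\hat Y)=\alpha(\hat X\times\hat Y)$ give $H_{\hat Z}$. Your version is more conceptual and makes transparent why the Frobenius and helicity conditions must coincide (both are $d\eta$ evaluated on the distribution), and your parenthetical connection-coefficient computation is a nice independent check; the paper's version has the advantage of being self-contained for readers unfamiliar with the Hodge star and musical isomorphisms. Your handling of orientation is adequate for the zero-set statement, since on each connected component the sign of $\det[\hat X\,\hat Y\,\hat Z]$ is constant and flipping one vector affects only the sign of $F$.
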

\begin{proof}
By Lemma \ref{lem:scaling}, it suffices to assume that $X,Y$ and
$Z$ is an orthonormal family of vector fields. Assume that 
\begin{equation}
\left<\left[X,Y\right],Z\right>=\left<\left(\nabla X\right)Y-\left(\nabla Y\right)X,Z\right>=0.\label{Frob_proof_eqn}
\end{equation}
Then differentiating the orthonormality assumptions $\left<X,Y\right>=\left<X,Z\right>=0,\|X\|=\|Y\|=\|Z\|=1$,
we obtain 
\begin{equation}
\begin{split} & \left(\nabla X\right)^{T}Z+\left(\nabla Z\right)^{T}X=0,\\
 & \left(\nabla Y\right)^{T}Z+\left(\nabla Z\right)^{T}Y=0,
\end{split}
\end{equation}
which, after substitution into the Frobenius relation \eqref{Frob_proof_eqn},
yields 
\begin{equation}
\left<X,\left[\nabla Z-\nabla Z^{T}\right]Y\right>=0
\end{equation}
Now we recall the following general identity for vector fields in
$\mathbb{R}^{3}$: 
\begin{equation}
\left[\nabla a-\nabla a^{T}\right]b=\left(\nabla\times a\right)\times b.
\end{equation}
Applying this to $a=Z$ and $b=Y$, we obtain 
\begin{equation}
\left<X,\left(\nabla\times Z\right)\times Y\right>=0.
\end{equation}
Finally, using the identity $\left(a\times b\right)\cdot c=\left(b\times c\right)\cdot a$
with $a=\nabla\times Z$, $b=Y$, and $c=X$, we obtain that \eqref{Frob_proof_eqn}
is equivalent to 
\begin{equation}
\left<\nabla\times Z,Z\right>=0
\end{equation}
as claimed.\end{proof}
\begin{rem}
Proposition \ref{prop:Frob_hel_equiv} shows that the helicity conditions
in Theorem 2 are equivalent to the Frobenius conditions $\left<\left[\xi_{1},\xi_{2}\right],\xi_{3}\right>=0$,
$\left<\left[\xi_{2},\xi_{3}\right],\xi_{1}\right>=0$, and $\left<\left[\xi_{2},n_{\pm}\times\xi_{2}\right],n_{\pm}\right>=0$
for repelling hyperbolic, attracting hyperbolic, and shear LCSs, respectively. 
\end{rem}
~~~~~
\begin{rem}
Frobenius Integrability Theorem applied to the existence of tangent
foliations provides a necessary and sufficient condition. By contrast,
the zero helicity condition (and its equivalent Frobenius condition)
are only necessary conditions for the existence of isolated surfaces
normal to a vector field $v$. For example, let $v(x,y,z)=(y,z,x)$.
Then $H_{v}(x,y,z)=\left<\nabla\times v,v\right>=-y-z-x$, which has
a plane as its zero set, but this plane is not orthogonal to $v$.
Thus $H_{v}(x,y,z)=0$ is not sufficient for the existence of a surface
orthogonal to $v$.
\end{rem}

\section{Proof of Theorem 3\label{App:proof_thm3} }

At any point $x_{0}\in\Pi(s_{1})$, a tangent vector to such a potential
intersection curve $\gamma_{s_{1}}=\Pi(s_{1})\cap\mathcal{M}(t_{0}$)
must be orthogonal both to the unit normal vector $n_{\Pi(s_{1})}$
of $\Pi(s_{1})$, and either to $\xi_{3}$ (repelling hyperbolic barriers),
to $\xi_{1}$ (attracting hyperbolic barriers) or to $n_{\pm}$ (shear
barriers). As a result, the intersection of a transport barrier $\mathcal{\mathcal{\mathcal{M}}}(t_{0})$
with $\Pi(s_{1})$ must be a curve tangent to one of the following
three vector fields on $\Pi(s_{1})$: 
\[
u_{\xi_{3}}(x_{0};s_{1})=n_{\Pi(s_{1})}(x_{0})\times\xi_{3}(x_{0}),\qquad u_{\xi_{1}}(x_{0};s_{1})=n_{\Pi(s_{1})}(x_{0})\times\xi_{1}(x_{0}),
\]
\[
u_{n_{\pm}}(x_{0};s_{1})=n_{\Pi(s_{1})}\times n_{\pm}(x_{0}).
\]
We call $u_{\xi_{3}}(x_{0};s_{1})$ the \emph{reduced strain vector
field} and $u_{\xi_{1}}(x_{0};s_{1})$ the \emph{reduced stretch vector
field} on the reference surface $\Pi(s_{1})$. Similarly, we call
and $u_{n_{\pm}}(x_{0};s_{1})$ the \emph{reduced shear vector fields}
on $\Pi(s_{1})$. Since the manifold family $\Pi(s_{1})$ is assumed
orientable, the unit normal vector field $n_{\Pi(s_{1})}(x_{0})$
can be selected smoothly globally on $\Pi(s_{1})$. By contrast, the
vector fields $\xi_{3}(x_{0})$ and $n_{\pm}(x_{0})$ are typically
not globally orientable, and can only be selected smoothly over open
subset of $\Pi(s_{1})$.

The resulting local orientability of the vector fields $u_{\xi_{3}}(x_{0};s_{1})$
and $u_{n_{\pm}}(x_{0};s_{1})$ on $\Pi(s_{1})$ is enough for the
construction of all possible intersection curves $\gamma_{s_{1}}=\Pi(s_{1})\cap\mathcal{\mathcal{M}}(t_{0})$.
This can be achieved by solving piecewise oriented versions of one
of the differential equations (\ref{eq:redstrain-ODE})-(\ref{eq:redshear-ODE}).
Furthermore, any trajectory of these differential equations that represents
a true intersection curve $\gamma_{s_{1}}=\Pi(s_{1})\cap\mathcal{M}(t_{0})$
with a barrier $\mathcal{\mathcal{\mathcal{M}}}(t_{0})$ must necessarily
lie in the zero set of the appropriate helicity function featured
in Theorem 2.

\section{Numerical aspects of computing hyperbolic and shear LCS}

\subsection{Computing hyperbolic LCS}

The algorithmic steps H1-H6 provide a way to compute intersections
between hyperbolic LCSs and a family of reference planes. This involves
computing trajectories of the reduced strain vector fields in (\ref{eq:redstrain-ODE})
and (\ref{eq:redstrain-ODE-1}), both of which will generally have
singularities and orientational discontinuities. A systematic description
of the numerical procedures to deal with these issues can be found
in \citep{Mo_var_theory}.

\subsection{Computing shear LCS}

Again, the algorithmic steps SH1-SH5 provide a way to compute intersections
of shear LCSs and with a family of reference planes. The reduced shear
vector field (\ref{eq:redshear-ODE}) has singularities and orientational
discontinuities that can be handles as in the case of reduced strain
vector field. 

An additional complication for shear LCS is the smooth choice of $n_{\pm}$
along reduced shear trajectories. The normal fields $n_{\pm}$ have
the general form 
\[
n_{\pm}=\alpha\xi_{1}\pm\beta\xi_{3},
\]
representing four different directions in the three-dimensional phase
space. 

In the theory of transport barriers for $2$D flows in \citep{geo_theory},
an analogous shear vector field 
\[
\eta_{\pm}=\alpha\xi_{1}\pm\beta\xi_{2}
\]
arises. For this field, one can adopt the global orientation convention
$\xi_{2}=\Omega\xi_{1}$, where $\Omega$ denotes the operator of
counter-clockwise rotation by $90$ degrees. With this way of computing
$\xi_{2}$ in terms of $\xi_{1}$, the vector field $\eta_{\pm}$
only represents two vectors due to the sign ambiguity of $\xi_{1}$,
which we assume $\xi_{2}$ inherits. One can then solve for trajectories
of $\eta_{\pm}$ by solving 
\[
r'(s)=\text{sign}\left(\left<\eta_{\pm}(r(s)),r(s-\Delta)\right>\right)\eta_{\pm}(r(s)),
\]
 with $\Delta$ denoting a numerical time step. 

In the $3$D setting of the present paper, we cannot find a linear
transformation $\Omega$ that would globally fix the orientation of
$\xi_{3}$ relative to $\xi_{1}$. One therefore does not have simply
two globally defined vector fields to integrate. Rather, one starts
the integration and has to consider at each point along a reduced
shearline all four possible directions, resulting in four possible
choices of $n_{0}$ at that point. The correct vector to select is
the one that results in a smooth reduced shearline (as the transverse
intersection of a shear LCS with the reference plane). The method
used in this paper to select the correct vectors at each time step
is:

1) At the initial condition $r(0)$ in the reference plane $\Pi(s_{1})$,
compute the vectors $\xi_{1}(r(0))$ and $\xi_{3}(r(0))$.

2) Assuming one has the solution $r(s_{2})$ and the vectors $\xi_{1}(r(s_{2}))$
$\xi_{3}(r(s_{2}))$ stored, one computes the reduced shearline at
the point $r(s_{2}+\Delta)$ by matching the direction of $\xi_{1}(r(s_{2}+\Delta))$
and $\xi_{3}(r(s_{2}+\Delta))$ with the direction of $\xi_{1}(r(s_{2}))$
and $\xi_{3}(r(s_{2}))$ when forming $n_{\pm}(r(s_{2})+\Delta)$. 

\bibliographystyle{plainnat}
\bibliography{HD_Transport}

\end{document}